  \def\MR#1{}
\theoremstyle{plain}
\newtheorem{thm}{Theorem}[section]
\newtheorem{prop}[thm]{Proposition}
\newtheorem{lem}[thm]{Lemma}
\newtheorem{claim}[thm]{Claim}
\newtheorem{cor}[thm]{Corollary}
\newtheorem*{thm*}{Theorem}
\theoremstyle{definition}
\newtheorem{defi}[thm]{Definition}
\newtheorem*{NaC}{Notation and Convention}
\newtheorem*{ACK}{Acknowledgment}
\theoremstyle{remark}
\newtheorem{rem}[thm]{Remark}
\newtheorem{nota}[thm]{Notation}
\numberwithin{equation}{section}
\newcommand{\Z}{\mathbb{Z}}
\renewcommand{\P}{\mathbb{P}}
\renewcommand{\a}{\alpha}
\newcommand{\e}{\varepsilon}
\newcommand{\s}{\sigma}
\renewcommand{\t}{\tau}
\newcommand{\vp}{\varphi}
\newcommand{\emp}{\varnothing}
\newcommand{\ol}{\overline}
\newcommand{\wt}{\widetilde}
\newcommand{\ra}{\Rightarrow}
\newcommand{\la}{\Leftarrow}
\newcommand{\hra}{\hookrightarrow}
\newcommand{\epm}{\twoheadrightarrow}
\DeclareMathOperator{\rk}{rk}
\DeclareMathOperator{\id}{id}
\DeclareMathOperator{\Sym}{\mathrm{Sym}}
\DeclareMathOperator{\Spec}{\mathrm{Spec}}
\DeclareMathOperator{\Proj}{\mathrm{Proj}}
\DeclareMathOperator{\Pic}{\mathrm{Pic}}
\DeclareMathOperator{\Gr}{\mathrm{Gr}}
\DeclareMathOperator{\Fl}{\mathrm{Fl}}
\DeclareMathOperator{\Bl}{\mathrm{Bl}}
\DeclareMathOperator{\Br}{\mathrm{Br}}
\DeclareMathOperator{\codim}{\mathrm{codim}}
\DeclareMathOperator{\pr}{pr}
\DeclareMathOperator{\ch}{ch}
\DeclareMathOperator{\Hom}{Hom}
\DeclareMathOperator{\End}{End}
\DeclareMathOperator{\Ker}{Ker}
\DeclareMathOperator{\Cok}{Cok}
\DeclareMathOperator{\Ext}{Ext}
\DeclareMathOperator{\Hilb}{Hilb}
\DeclareMathOperator{\Univ}{\mathrm{Univ}}
\DeclareMathOperator{\ev}{ev}
\DeclareMathOperator{\coev}{coev}
\newcommand{\mcA}{\mathcal{A}}
\newcommand{\mcB}{\mathcal{B}}
\newcommand{\mcD}{\mathcal{D}}
\newcommand{\mcE}{\mathcal{E}}
\newcommand{\mcF}{\mathcal{F}}
\newcommand{\mcG}{\mathcal{G}}
\newcommand{\mcH}{\mathcal{H}}
\newcommand{\mcI}{\mathcal{I}}
\newcommand{\mcK}{\mathcal{K}}
\newcommand{\mcO}{\mathcal{O}}
\newcommand{\mcL}{\mathcal{L}}
\newcommand{\mcN}{\mathcal{N}}
\newcommand{\mcM}{\mathcal{M}}
\newcommand{\mcQ}{\mathcal{Q}}
\newcommand{\mcR}{\mathcal{R}}
\newcommand{\mcT}{\mathcal{T}}
\newcommand{\mcV}{\mathcal{V}}
\let\Im\relax
\DeclareMathOperator{\Im}{\mathrm{Im}}
\DeclareMathOperator{\Coh}{Coh}
\DeclareMathOperator{\RG}{\mathrm{R}\Gamma}
\DeclareMathOperator{\ext}{\mathrm{ext}}
\DeclareMathOperator{\RHom}{\mathrm{RHom}}
\newcommand{\RR}{\mathbf{R}}
\newcommand{\LL}{\mathbf{L}}
\DeclareMathOperator{\Db}{\mathrm{D}^{\mathrm{b}}}
\newcommand{\wF}{\mathsf{wF}}
\newcommand{\ins}{\mathsf{ins}}
\newcommand{\Sch}{\mathrm{Sch}}
\newcommand{\AffSch}{\mathrm{AffSch}}
\newcommand{\Sets}{\mathrm{Sets}}
\newcommand{\op}{\mathrm{op}}
\renewcommand{\mod}{\mathrm{mod}}
\mathchardef\mhyphen="2D
\title[Rank $2$ Weak Fano bundles on del Pezzo $3$-folds of degree $5$]{Rank two weak Fano bundles on del Pezzo threefolds of degree five}
\author[T.FUKUOKA, W.HARA, D.ISHIKAWA]{Takeru Fukuoka, Wahei Hara, Daizo Ishikawa}
\address[T.FUKUOKA]{Graduate School of Mathematical Sciences\\The University of Tokyo\\3-8-1 Komaba\\Meguro-ku, Tokyo 153-8914, Japan}
\email{tfukuoka@ms.u-tokyo.ac.jp}
\address[W.HARA]{The Mathematics and Statistics Building, University of Glasgow, University Place, Glasgow, G12 8QQ, UK.}
\email{wahei.hara@glasgow.ac.uk}
\address[D.ISHIKAWA]{Department of Mathematics, School of Science and Engineering, Waseda University, Ohkubo 3-4-1, Shinjuku, Tokyo 169-8555, JAPAN}
\email{azoth@toki.waseda.jp}
\date{\today}
\subjclass[2010]{14J60, 14J45, 14J30.}
\begin{document}
\maketitle
%\tableofcontents
\begin{abstract}
This paper classifies rank two vector bundles on a del Pezzo threefold $X$ of degree five whose projectivizations are weak Fano. 
This classification is then used to determine properties of the moduli spaces of such vector bundles on $X$, and we determine precisely when the moduli spaces are smooth, irreducible, and fine.
We also prove that such a bundle on a del Pezzo threefold of degree one or two splits, and as result give a classification of weak Fano bundles of rank two on a del Pezzo threefold of Picard rank one. 
\end{abstract}

\section{Introduction}

% What are weak Fano bundles

\subsection{Background}
To deepen the classification theory of Fano manifolds, Szurek and Wisniewski \cite{SW90} introduced the notion of a \emph{Fano bundle}, which is a vector bundle $\mcE$ on a smooth projective variety $X$ such that $\P_{X}(\mcE)$ is Fano. 
They showed that the base space $X$ of a Fano bundle $\mcE$ must be a Fano manifold \cite[THEOREM]{SW90}. 
%\cite[THEOREM]{SW90} 
In many papers \cite{SW90,Langer98,mos2,mos1,Ohno16v7}
the classification problem of Fano bundles have been treated on a fixed base Fano manifold,
and rank $2$ Fano bundles on a Fano $3$-fold of Picard rank $1$ were classified by \cite{mos2}. 

In \cite{Langer98}, Langer introduced the notion of \emph{weak Fano} bundles as a natural generalization of that of Fano bundles. 
Namely, a vector bundle $\mcE$ on $X$ is said to be weak Fano if $\P_{X}(\mcE)$ is weak Fano, i.e., $-K_{\P_{X}(\mcE)}$ is nef and big. 
As opposed to Fano bundles, rank $2$ weak Fano bundles on a Fano $3$-fold of Picard rank $1$ have not yet been classified. 

Our previous articles \cite{Ishikawa16,FHI20} classified rank $2$ weak Fano bundles on a del Pezzo $3$-fold $X$ when the degree of $X$, say $\deg X$, is $3$ or $4$.
Recall that a \textit{del Pezzo $3$-fold} is a smooth Fano $3$-fold whose Fano index $r(X) :=\max \{r \mid -K_{X} \sim rH \text{ for some } H \in \Pic(X)\} $ is $2$, and its degree is defined by $\deg X = H^{3}$, where $H$ is the divisor that attains $-K_{X} \sim 2H$.
It is known that a del Pezzo threefold $X$ satisfies $\deg X \leq 5$ if and only if the Picard rank of $X$ is equal to $1$ \cite{Iskovskikh77,Fujita80}.

In the sequel of the previous works, the present paper classifies rank two weak Fano bundles over the del Pezzo $3$-fold $X$ of $\deg X=5$. 
The classification for $\deg X \in \{1,2\}$ cases will follow easily. 
%Since it is known that a del Pezzo $3$-fold has Picard rank $1$ if and only if $\deg X \leq 5$ \cite{Iskovskikh77,Fujita80}, 
Therefore, as a consequence, this paper completes the classification of rank $2$ weak Fano bundles on a del Pezzo $3$-fold $X$ of Picard rank $1$. 
%A del Pezzo $3$-fold is, by definition, a smooth Fano $3$-fold whose Fano index $r(X)=\min \{r \mid -K_{X} \sim rH \text{ for some } H \in \Pic(X)\} $ is $2$. 
%}

% Known Results and our pervious papers

\subsection{Known Results}\label{subsec-intro-resol-known}

Our previous work \cite{FHI20} treated the case when $\deg X=4$,
where roughly speaking, we showed that every indecomposable rank $2$ weak Fano bundle is either a Fano bundle or a vector bundle $\mcE$ with $c_{1}(\mcE)=0$ such that $\mcE(1)$ is globally generated. 
By the same approach as in \cite{FHI20}, we will see that this phenomenon still holds when $\deg X = 5$ (see Section~\ref{sec-conclusions}),
although the techniques here to establish the results are very different (see Section \ref{subsec-PlanOfProof}).

In order to deepen the study of rank $2$ weak Fano bundles $\mcE$ when $\deg X=5$, 
we give a new description of such bundles in terms of resolutions consisting of natural vector bundles on $X$. 
This approach has been used often in the study of vector bundles on projective spaces: indeed, \cite{Langer98}, \cite{SW90} and \cite{Ohno16v7} gave the classification of vector bundles in terms of linear resolutions of them. 
This form of classification is useful for studying various properties of vector bundles such as its global generation or properties of its moduli space. 
% and its cohomology. 

As in the case $\deg X = 4$, an instanton bundle $\mcE$ such that $\mcE(1)$ is globally generated and $c_{2}(\mcE) \leq 4$ is an example of a weak Fano bundle.
Recall that an \emph{instanton bundle} on a del Pezzo $3$-fold is a slope stable rank $2$ vector bundle $\mcE$ such that $c_{1}(\mcE)=0$ and $h^{1}(\mcE(-1))=0$ \cite{Kuznetsov12,Faenzi14}.
While instanton bundles $\mcE$ on a del Pezzo $3$-fold of degree $5$ were studied by Sanna and investigated very effectively when $c_{2}(\mcE) \leq 3$ \cite{Sanna14,Sanna17}, 
those with $c_{2}(\mcE)=4$ is much less well understood. 
This article contains both new results for $c_{2}(\mcE) \leq 3$, and also a description of instanton bundles $\mcE$ with $c_{2}(\mcE) = 4$ such that $\mcE(1)$ is globally generated.
Applying those results reveals some properties for their moduli spaces.

%However, when $\deg X=5$, we will encounter an instanton bundle $\mcE$ with $c_{2}(\mcE)=4$ as an example of a weak Fano bundle. While instanton bundles $\mcE$ with $c_{2}(\mcE) \leq 3$ were studied by Sanna \cite{Sanna14,Sanna17} in detail, 
%those with $c_{2}(\mcE)=4$ have not been studied yet.  
%Thus, we need a different approach from our previous studies \cite{Ishikawa16,FHI20} to investigate such weak Fano bundles. 

%{\color{red} As a new approach to classification}, we treat weak Fano bundles in terms of resolutions consisting of natural vector bundles on a del Pezzo $3$-fold of degree $5$.  This approach has often been taken in studies of vector bundles on projective spaces. Indeed, \cite{Langer98}, \cite{SW90} and \cite{Ohno16v7} gave the classification of vector bundles in terms of linear resolutions of them. This form of classification is useful for studying various properties of vector bundles such as its global generation or its moduli space. }
% and its cohomology. 

\subsection{Main Results}\label{subsec-intro-resol-dP5}

%In this article, {\color{red}we classify rank $2$ weak Fano bundles on del Pezzo $3$-folds of Picard rank $1$ of degree $1,2,5$ and give some applications of this classification.}
The main results of this article are broadly divided into three parts.

\subsubsection{Classification on a del Pezzo threefold of degree $5$.}

Let $X$ be a del Pezzo $3$-fold of degree $5$. 
The first result of this article is the classification of rank $2$ weak Fano bundles on $X$. 

By classical classification theory of del Pezzo $3$-folds due to Fujita \cite{Fujita81} and Iskovskikh \cite{Iskovskikh77}, $X$ is a codimension $3$ linear section of $\Gr(2,5)$. 
Let $\mcR$ and $\mcQ$ denote the restriction of the universal rank $2$ subbundle and the universal quotient bundle of rank $3$ on $\Gr(2,5)$, respectively. 
The main result of this article is the following explicit resolution of rank $2$ weak Fano bundles on $X$. 

\begin{thm}\label{mainthm-resol}
Let $X$ be a del Pezzo $3$-fold of degree $5$. 
%For a normalized bundle $\mcE$ of rank $2$, 
Then, every weak Fano bundle $\mcE$ is isomorphic to one of the following up to twisting with a line bundle. 
%$\mcE$ is a weak Fano bundle if and only if $\mcE$ satisfies one of the following. 
\renewcommand{\labelenumi}{(\roman{enumi})}
\begin{enumerate}
\item $\mcE \simeq \mcO_{X}(1) \oplus \mcO_{X}(-1)$. 
In this case, $c_{1}(\mcE)=0$ and $c_{2}(\mcE)=-5$. 
\item $\mcE \simeq \mcO_{X} \oplus \mcO_{X}(-1)$. 
In this case, $c_{1}(\mcE)=-1$ and $c_{2}(\mcE)=0$. 
\item $\mcE \simeq \mcO_{X}^{\oplus 2}$.
In this case, $c_{1}(\mcE)=0$ and $c_{2}(\mcE)=0$. 
\item $\mcE \simeq \mcR$.
In this case, $c_{1}(\mcE)=-1$ and $c_{2}(\mcE)=2$. 
\item $\mcE$ fits into 
$0 \to \mcQ(-1) \to \mcO_{X} \oplus \mcR^{\oplus 2} \to \mcE \to 0$. 
In this case, $c_{1}(\mcE)=0$ and $c_{2}(\mcE)=1$.
\item $\mcE$ fits into 
$0 \to \mcQ(-1)^{\oplus 2} \to \mcR^{\oplus 4} \to \mcE \to 0$. 
In this case, $c_{1}(\mcE)=0$ and $c_{2}(\mcE)=2$.
\item $\mcE$ fits into 
$0 \to \mcO_{X}(-1) \oplus \mcQ(-1) \to \mcR^{\oplus 5} \to \mcO_{X}^{\oplus 8} \to \mcE(1) \to 0$. 
In this case, $c_{1}(\mcE)=0$ and $c_{2}(\mcE)=3$.
\item $\mcE$ fits into 
$0 \to \mcO_{X}(-1)^{\oplus 2} \to \mcQ(-1)^{\oplus 2} \to \mcO_{X}^{\oplus 6} \to \mcE(1) \to 0$. 
In this case, $c_{1}(\mcE)=0$ and $c_{2}(\mcE)=4$.
\end{enumerate}

In the above results, we regard the numerical class groups of $\Z$-coefficients $N^{1}(X)_{\Z}$ and $N^{2}(X)_{\Z}$ as $\Z$ by taking the effective classes generating these class groups.

Furthermore, on an arbitrary del Pezzo $3$-fold of degree $5$, 
there exist examples for each case of (i)--(viii). 
\end{thm}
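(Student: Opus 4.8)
The plan is to prove the equivalence in two directions, with the bulk of the work going into the "only if" part. For the "if" direction, one must verify that each of the bundles listed in (i)--(viii) is indeed weak Fano; this amounts to a computation of $-K_{\P_X(\mcE)}$ and a check that it is nef and big. For the decomposable cases (i)--(iii) and for $\mcE \simeq \mcR$ in (iv), the projectivization is an explicit homogeneous-type or toric-type variety and nefness/bigness of the anticanonical can be checked directly by intersecting with the generating curve classes of $\NE(\P_X(\mcE))$; the last equality statements on $c_1$ and $c_2$ in each case are routine Chern class bookkeeping using the known Chern classes of $\mcR$ and $\mcQ$ on $X$ (recall $c(\mcR)c(\mcQ)$ is determined by restricting from $\Gr(2,5)$). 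For cases (v)--(viii), which are given by resolutions, the strategy is first to establish that such an $\mcE$ exists as a vector bundle (i.e.\ that a sufficiently general map in the relevant $\Hom$-space has locally free cokernel/quotient of the right rank) and then to compute its Chern character from the resolution; nefness and bigness of $-K_{\P_X(\mcE)}$ then follow by identifying the extremal contractions of $\P_X(\mcE)$, which the resolution makes visible (e.g.\ the surjection $\mcO^{\oplus 8}\to\mcE(1)$ in (vii) and $\mcO^{\oplus 6}\to\mcE(1)$ in (viii) show $\mcE(1)$ is globally generated, hence $\mcO_{\P(\mcE)}(1)\otimes\pi^*\mcO_X(1)$ is globally generated, giving one nef divisor, and the other ray is handled by the sub-bundle side of the sequence).

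The "only if" direction is the heart of the matter and should be organized around a boundedness-then-enumeration scheme, exactly as in the companion paper \cite{FHI20} for $\deg X = 4$: assume $\mcE$ normalized of rank $2$ with $\P_X(\mcE)$ weak Fano, and first bound the discriminant $c_2(\mcE)$ (for each normalization of $c_1$) by intersecting $-K_{\P_X(\mcE)}$ against suitable curves and surfaces. This produces a finite list of numerical possibilities $(c_1,c_2)$. Then, for each surviving pair, one analyzes the Hilbert polynomial / cohomology of twists of $\mcE$ to force a Beilinson-type or mutation-type resolution in terms of the exceptional collection on a del Pezzo threefold of degree $5$ — here one uses that $X$ carries the full exceptional collection $\langle \mcO, \mcR^\vee, \mcO(1), \ldots\rangle$ (or its dual), so that every coherent sheaf has a functorial resolution whose terms are sums of $\mcO$, $\mcO(-1)$, $\mcR$, $\mcQ(-1)$, etc. Matching the numerical invariants against the possible shapes of such resolutions yields precisely the eight cases. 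I would expect the semistable/indecomposable instanton case with $c_2 = 4$ (case (viii)) to require a separate and more delicate argument, since, as the introduction notes, such bundles have not previously appeared; one likely needs the Bogomolov inequality together with a careful vanishing argument ($H^1(\mcE(-1))=0$, $H^0(\mcE)=0$, etc.) to pin down the resolution uniquely.

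The main obstacle I anticipate is twofold. First, in the boundedness step, obtaining a sharp enough bound on $c_2$ so that the list of numerical cases is exactly $\{-5,0,0,2,1,2,3,4\}$ and no spurious cases slip through: this requires a clever choice of test curves/divisors on $\P_X(\mcE)$ and uses special geometry of $X$ (lines on $X$, the $\mathrm{SL}_2$-action when $X = X_5$), so the argument will not be purely numerical. Second, for the higher $c_2$ cases, proving that the numerical data genuinely forces the claimed resolution — rather than merely being consistent with it — needs control of the relevant $\Ext$-groups and possibly an argument that the generic member of the family of bundles with those invariants has the asserted resolution, plus a semicontinuity/deformation argument to cover all members. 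The final existence clause ("on an arbitrary del Pezzo $3$-fold of degree $5$") is comparatively soft: since all del Pezzo threefolds of degree $5$ are isomorphic, it suffices to exhibit one example in each case, which is furnished by the resolutions themselves once one checks the generic map has the right cokernel, and this is where a dimension count on $\Hom$-spaces of $\mcR,\mcQ,\mcO$ does the job.
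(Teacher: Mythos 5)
Your high-level architecture for the $c_1=0$ cases does match the paper's: bound $c_2$ by the positivity of $(-K_{\P_X(\mcE)})^4$, then produce resolutions from the full exceptional collection $\langle\mcO(-1),\mcQ(-1),\mcR,\mcO\rangle$ via mutations. But two of the steps you wave at are precisely where the real content lies, and your proposal does not supply it. First, the claim that ``matching the numerical invariants against the possible shapes of such resolutions yields precisely the eight cases'' is not true as stated: the shape of the resolution is governed by the individual groups $\Ext^i(\mcQ(-1),\mcE)$ and $\Ext^i(\mcR,\mcE)$, not by their Euler characteristics, and computing these is the hard part. For $c_2=3$ the paper imports Sanna's jumping-line analysis to get $\RHom(\mcR,\mcE)\simeq\Bbbk$, and for $c_2=4$ it proves a delicate vanishing $\Hom(\mcR,\mcE)=\Hom(\mcQ(-1),\mcE)=0$ by a four-step geometric argument (stability forces any nonzero map $\mcE\to\mcQ^{\vee}(1)$ to be injective, its saturation and the cokernel $\mcI_Z(a)$ are analyzed, $Z$ is shown to be a line or purely one-dimensional, and a contradiction is extracted from $h^0(\mcO_Z)$). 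You correctly flag this as an anticipated obstacle, but naming the obstacle is not the same as overcoming it. Second, you fold the $c_1=-1$ case into the same scheme, but the numerical bound alone leaves several values of $c_2$ alive there; the paper eliminates them and pins down $\mcE\simeq\mcR$ by pulling $\mcE$ back along the degree-$3$ finite cover $e\colon U=\P_{\P^2}(\mcG)\to X$ from the universal family of lines (Furushima--Nakayama), using that $\mcE|_l\simeq\mcO\oplus\mcO(-1)$ forces a splitting of $e^{*}\mcE$ and that $c_2(e^{*}\mcE)$ must be divisible by $e^{*}l$. Nothing in your proposal substitutes for this mechanism.

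On the existence clause, your route (generic maps in the asserted resolutions) diverges from the paper's and is not obviously easier: for (vii) and (viii) the resolutions are four-term complexes, so ``the generic map has the right cokernel'' does not reduce to a single dimension count, and one must still verify that the resulting bundle is weak Fano. The paper instead produces examples for $c_2\in\{3,4\}$ by taking an elliptic curve $C\subset X$ of degree $c_2+5$ for which $\Bl_C X$ is weak Fano (Arap--Cutrone--Marshburn) and applying the Hartshorne--Serre correspondence, and for $c_2=2$ it twists a special Ulrich bundle. Your approach could in principle work but would require constructing the monads explicitly and checking nefness, which is additional unproven work rather than the ``comparatively soft'' step you describe.
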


\begin{rem}\label{mainrem-resol}
The resolutions given in Theorem~\ref{mainthm-resol} are not unique. 
For example, there exist the following alternative resolutions. 

\renewcommand{\labelenumi}{(\roman{enumi})}
\begin{enumerate}
\setcounter{enumi}{4}
\item The exact sequence in (v) can be replaced by 
$0 \to \mcR \to \mcQ^{\vee} \oplus \mcO_{X} \to \mcE \to 0$. 
For more details, see Remark~\ref{rem-c21-another}. 
\item The exact sequence in (vi) can be replaced by
$0 \to \mcR^{\oplus 2} \to (\mcQ^{\vee})^{\oplus 2} \to \mcE \to 0$. 
For more details, see Section~\ref{subsec-c2=2} and the sequence (\ref{ex-c22-another}). 
\item The exact sequence in (vii) can be replaced by
$0 \to \mcO_{X}(-1) \to \mcR^{\oplus 2} \oplus \mcQ^{\vee} \to \mcO_{X}^{\oplus 8} \to \mcE(1) \to 0$.
For more details, see Remark~\ref{rem-c23-another}. 
\end{enumerate}
\end{rem}

%{\color{red}
\begin{rem}\label{rem-stability}
By our classification and Hoppe's criterion \cite[Theorem~2.10]{Sanna17}, $\mcE$ is slope stable (resp. slope semi-stable, not slope semi-stable) if and only if $\mcE$ is of type (iv), (vi), (vii) or (viii) (resp. (iii) or (v), (i) or (ii)) (see also Remark~\ref{rem-st}). 
%
%Let $(X,\mcO_{X}(1))$ be a smooth polarized variety with $\Pic(X)=\Z \cdot \mcO_{X}(1)$ and $\mcE$ a rank $2$ bundle. 
%Then $\mcE$ is slope stable (resp. slope semi-stable) if and only if 
%%$\Hom(\mcO(\lfloor \frac{c_{1}(\mcE)}{2} \rfloor),\mcE)=0$ (resp. $\Hom(\mcO(\lfloor \frac{c_{1}(\mcE)-1}{2} \rfloor),\mcE)=0$). 
%$H^{0} \left( \mcE \left( \lceil - \frac{c_{1}(\mcE)}{2} \rceil \right) \right)=0$ 
%(resp. $H^{0} \left( \mcE \left( \lceil - \frac{c_{1}(\mcE)-1}{2} \rceil \right) \right)=0$). 
%Hence by our classification, when $X$ is a del Pezzo $3$-fold of degree $5$ and $\mcE$ is weak Fano additionally, $\mcE$ is slope stable (resp. slope semi-stable, not slope semi-stable) if and only if $\mcE$ is of type (iv), (vi), (vii) or (viii) (resp. (iii) or (v), (i) or (ii)). 
%
\end{rem}
%}

\begin{rem}
The known classification of Fano bundles \cite{mos2} implies that, among the list in Theorem~\ref{mainthm-resol}, the bundles of type (ii), (ii) and (iv) are Fano, and others are not Fano but weak Fano.
\end{rem}

\subsubsection{Moduli spaces}

Our description of rank $2$ weak Fano bundles on a del Pezzo $3$-fold $X$ of degree $5$ (=Theorem~\ref{mainthm-resol}) is useful to investigate their moduli spaces. 
Let $M^{\wF}_{c_{1},c_{2}}$ be the coarse moduli space of rank $2$ weak Fano bundles $\mcE$ on $X$ with $c_{1}(\mcE)=c_{1}$ and $c_{2}(\mcE)=c_{2}$ (see Definition~\ref{defi-modulispace}). 
Since the classes in Theorem~\ref{mainthm-resol}~(i),(ii), (iii), and (iv) consist of the unique object, the moduli problem for them is not interesting.
For the class in Theorem~\ref{mainthm-resol}~(v), Proposition~\ref{prop-moduli-prelim}~(1) will show that
the coarse moduli space %$M^{\wF}_{0,-5}$, $M^{\wF}_{-1,0}$, $M^{\wF}_{0,0}$, and $M^{\wF}_{-1,2}$ are isomorphic to the point, and 
$M^{\wF}_{0,1}$ is isomorphic to $\P^{2}$, and this is not a fine moduli space.
%by Proposition~\ref{prop-Ishikawa}, 
%In particular, these moduli spaces are smooth, projective, and fine as the moduli spaces. 
%Among the above moduli spaces, only $M^{\wF}_{-1,2}$ is fine as a moduli space (see Proposition~\ref{prop-moduli-prelim}~(2)).
%Indeed, for a rank $2$ weak Fano bundle $\mcE$, $\mcE$ is not simple if $(c_{1}(\mcE),c_{2}(\mcE)) \in \{(0,-5),(-1,0),(0,0),(-1,2)\}$ and stable if $(c_{1}(\mcE),c_{2}(\mcE))=(-1,2)$. 

The moduli spaces $M^{\wF}_{0,n}$ with $n \in \{2,3,4\}$ have more complicated features. 
Since every rank $2$ weak Fano bundle $\mcE$ with $c_{1}(\mcE)=0$ and $c_{2}(\mcE) \in \{2,3,4\}$ is a slope stable instanton bundle
(see Remark~\ref{rem-stability} and \cite[Corollary~4.7]{FHI20}), 
$M^{\wF}_{0,n}$ is an open subscheme of the coarse moduli space $M^{\ins}_{0,n}$ of instanton bundles $\mcE$ on $X$ with $c_{1}(\mcE)=0$ and $c_{2}(\mcE)=n$ 
(for the definition of $M^{\ins}_{0,n}$ we refer to \cite{Faenzi14,Kuznetsov12}). 
%Moduli spaces of instanton bundles on a del Pezzo $3$-fold of degree $5$ were treated in articles including \cite{Beauville}, \cite{Sanna17}, and \cite{??}. 
When $n \in \{2,3\}$, it was proved by \cite{Sanna14,Sanna17} that $M^{\ins}_{0,n}$ is smooth, irreducible, but not projective. 
When $n=2$, it was known that $M^{\wF}_{0,2}=M^{\ins}_{0,2}$ since every minimal instanton bundle is a weak Fano bundle (c.f. \cite[Section~1.3]{FHI20}). 
Hence $M^{\wF}_{0,2}$ is not fine as a moduli space by \cite[Proposition~5.12]{Sanna17}. 
On the other hand, Sanna shows that $M^{\ins}_{0,3}$ is fine as a moduli space \cite[Remark 4.11]{Sanna17} and contains $M^{\wF}_{0,3}$ as a proper open subvariety. 
%(see \cite[Section~8]{Sanna17} and {\color{red}Section~\ref{sec-chara}}). 
Hence $M^{\wF}_{0,3}$ is a fine moduli space. 

In this article, we use our description to establish the following properties of $M^{\wF}_{0,4}$. 
%In conclusion, we obtain the following theorem. 
\begin{thm}\label{mainthm-moduli}
$M_{0,4}^{\wF}$ is an irreducible smooth quasi-projective variety of dimension $13$.  
Moreover, $M_{0,4}^{\wF}$ is not fine as a moduli space. 
In particular, the coarse moduli space $M^{\wF}_{0,c_{2}}$ is fine if and only if $c_{2} = 3$. 
\end{thm}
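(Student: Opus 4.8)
The plan is to prove Theorem~\ref{mainthm-moduli} in three stages: first establish smoothness and the dimension count for $M^{\wF}_{0,4}$, then prove irreducibility, and finally show non-fineness; the last sentence is then immediate by combining with the facts recalled in the introduction (that $M^{\wF}_{0,c_2}$ is a point for $c_2 \in \{-5,0\}$ in the $c_1=0$ normalization—actually $M^{\wF}_{0,0}$ and $M^{\wF}_{0,-5}$ are points and $M^{\wF}_{0,1} \cong \P^2$, none of which is fine, while $M^{\wF}_{0,2}$ is not fine and $M^{\wF}_{0,3}$ is fine).

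\textbf{Smoothness and dimension.} Since every rank $2$ weak Fano bundle $\mcE$ with $c_1(\mcE)=0$ and $c_2(\mcE)=4$ is a stable instanton bundle, $M^{\wF}_{0,4}$ is an open subscheme of $M^{\ins}_{0,4}$, and deformation theory gives that the tangent space at $[\mcE]$ is $\Ext^1(\mcE,\mcE)$ with obstructions in $\Ext^2(\mcE,\mcE)$. I would compute these Ext-groups using the resolution in Theorem~\ref{mainthm-resol}(viii), namely $0 \to \mcO(-1)^{\oplus 2} \to \mcQ(-1)^{\oplus 2} \to \mcO^{\oplus 6} \to \mcE(1) \to 0$, together with the known cohomology of the natural bundles $\mcO(i)$, $\mcR$, $\mcQ$, and their tensor products on a degree $5$ del Pezzo threefold (Borel–Bott–Weil type computations, restricted from $\Gr(2,5)$, plus the Koszul resolution of $X$). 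The goal is to show $\Ext^2(\mcE,\mcE)=0$ (hence unobstructedness and smoothness) and $\dim \Ext^1(\mcE,\mcE) = 13$; equivalently, since $\chi(\mcE,\mcE)$ is computed by Riemann–Roch from $c_1,c_2$, and $\Hom(\mcE,\mcE)=\Bbbk$ by stability and $\Ext^3(\mcE,\mcE)=H^0(\mcE\otimes\mcE^\vee\otimes K_X)^\vee=0$ by stability and $K_X$ negative, it suffices to check $\Ext^2$ vanishes. Quasi-projectivity follows since $M^{\ins}_{0,4}$ is a quasi-projective (being an open subset of a moduli space of Gieseker-semistable sheaves, or via the monad/GIT description) and $M^{\wF}_{0,4}$ is open in it.

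\textbf{Irreducibility.} Here I would argue that the family of bundles $\mcE$ admitting a resolution of the form (viii) is irreducible: such $\mcE$ are parametrized by the choice of maps $\mcO(-1)^{\oplus 2} \to \mcQ(-1)^{\oplus 2}$ and $\mcQ(-1)^{\oplus 2} \to \mcO^{\oplus 6}$ satisfying suitable genericity (the composite being zero, the first injective as a bundle map, and appropriate cokernel/kernel conditions), i.e. by an open subset of an affine space (a product of $\Hom$-spaces cut out by the equation that the composite vanishes) modulo the action of the automorphism groups of the terms $\mcO(-1)^{\oplus 2}$, $\mcQ(-1)^{\oplus 2}$, $\mcO^{\oplus 6}$. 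Since that parameter space is an open subset of a vector space (hence irreducible) and the group acting is connected, the image $M^{\wF}_{0,4}$ is irreducible. One must check that every bundle in $M^{\wF}_{0,4}$ actually arises this way (which is exactly the content of Theorem~\ref{mainthm-resol}(viii), read as an ``if and only if''), and that the genericity conditions defining the good locus are open and nonempty.

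\textbf{Non-fineness.} To show $M^{\wF}_{0,4}$ is not fine, the cleanest route is to exhibit an obstruction to the existence of a universal family: I would look for a closed point $[\mcE]$ and a small (e.g. first-order or one-parameter) deformation over an Artinian or a smooth curve base over which no universal bundle exists, typically because the relevant $\mathbb{G}_m$-gerbe (whose class lives in $\Br$ of the moduli space, or which is detected by a non-trivial obstruction in $H^2$ with $\mcO^\times$ coefficients) is non-trivial; concretely one often compares with the $c_2=2$ case where Sanna's argument (\cite[Proposition~5.12]{Sanna17}) shows non-fineness via jumping lines / the minimal instanton locus, and here one can use that a suitable degeneration of a $c_2=4$ weak Fano bundle (or a subfamily isomorphic to, or dominated by, a known non-fine family) inherits the obstruction. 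Alternatively, and perhaps more robustly, I would compute that the stack $\mathcal{M}^{\wF}_{0,4}$ is a non-trivial $\mathbb{G}_m$-gerbe over $M^{\wF}_{0,4}$ by showing some point (e.g.\ one lying in the closure-type locus coming from case (vii) or from split/decomposable-adjacent bundles) has the property that the universal obstruction does not vanish, e.g.\ by exhibiting an \'etale-locally-but-not-globally-trivializable family.

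\textbf{Main obstacle.} I expect the hardest part to be the non-fineness statement: smoothness and the dimension count are (laborious but) mechanical cohomology computations from resolution (viii), and irreducibility follows formally once the resolution is available and its genericity locus understood. Establishing non-fineness requires either producing an explicit family with no universal bundle or computing a Brauer-class obstruction, and since $c_2=4$ instanton bundles ``have not been studied yet'' (as the introduction notes), there is no off-the-shelf input; I would try to reduce to Sanna's $c_2\le 3$ results by finding inside $M^{\wF}_{0,4}$ (or its boundary in $M^{\ins}_{0,4}$) a subvariety over which the universal-family obstruction is pulled back from a known non-fine moduli space, or else argue via elementary transformations relating $c_2=4$ bundles to the $c_2=3$ or $c_2=2$ cases in a way that transports the gerbe class.
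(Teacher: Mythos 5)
Your first step is sound in outline: the reduction of smoothness to $\Ext^{2}(\mcE,\mcE)=0$ via Riemann--Roch, stability and Serre duality is exactly right, and quasi-projectivity from openness in the instanton moduli is fine. (For the record, the paper does not run the Borel--Bott--Weil computation from resolution (viii); it argues geometrically, taking a general section of $\mcF=\mcE(1)$ vanishing on an elliptic curve $C$ of degree $9$ and using the classification of weak Fano threefolds with divisorial crepant contractions to see that $\mcN_{C/X}$ has no trivial quotient, whence $\RG(\mcF|_{C})\simeq\Bbbk^{18}$ and the Ext groups follow. Either route should work.) The first genuine gap is in your irreducibility argument: the locus of pairs $(\alpha,\beta)$ with $\beta\circ\alpha=0$ inside $\Hom(\mcO(-1)^{\oplus2},\mcQ(-1)^{\oplus2})\times\Hom(\mcQ(-1)^{\oplus2},\mcO^{\oplus6})$ is cut out by bilinear equations; it is \emph{not} an open subset of an affine space, and its irreducibility is not automatic as you assert. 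The paper sidesteps this by observing that only the first map matters: $\mcK_{\mcF}=\Cok(\alpha)$ determines $\mcE$ (the second map being the canonical evaluation), and $\Hom(\mcO(-1)^{\oplus2},\mcQ(-1)^{\oplus2})\simeq\Bbbk^{5}\otimes\Hom(\Bbbk^{2},\Bbbk^{2})$ is precisely the representation space of the $5$-Kronecker quiver $Q$ with dimension vector $(2,2)$. One then must prove that $\Hom(\mcQ(-1)\oplus\mcH,\mcK_{\mcF})$ is a $\Theta$-stable representation (Proposition~\ref{lem-quiver-stable}, a nontrivial argument) and that the induced map $M^{\wF}_{0,4}\to M^{\Theta\text{-st}}_{(2,2)}(Q)$ is an open immersion; irreducibility then comes for free.

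The second and more serious gap is non-fineness, where your proposal is essentially a list of things one might try rather than an argument. Transporting a gerbe class from the $c_{2}\leq 3$ cases cannot work as stated ($M^{\wF}_{0,3}$ is fine, so there is nothing to transport, and no degeneration mechanism from $c_{2}=4$ to $c_{2}=2$ is provided), and ``exhibit a family with no universal bundle'' is a restatement of the claim, not a method. The missing idea is again the quiver picture: since $\gcd(2,2)=2\neq1$, the quiver moduli space carries a nontrivial Brauer obstruction to a universal family, and by \cite[Theorems~4.2 and 4.4]{RS17} this obstruction survives restriction to an open subscheme provided the complement has codimension at least $2$. The paper verifies this codimension bound via Chung--Moon's identification $M^{\Theta\text{-sst}}_{(2,2)}(Q)\simeq T_{4}$ \cite{Chung-Moon17,HT15}: stable representations correspond to quadrics of rank exactly $4$, so the complement of $M^{\Theta\text{-st}}_{(2,2)}(Q)$ lies over $S_{3}$, which has dimension $11$ inside the $13$-dimensional $T_{4}$. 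Without this (or an equivalent computation of the Brauer class), the non-fineness claim remains unproven.
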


\subsubsection{Classifications on del Pezzo $3$-folds of degree $\leq 2$}

By Theorem~\ref{mainthm-resol} and our previous works \cite{Ishikawa16,FHI20}, 
we obtain a classification result on del Pezzo $3$-folds $X$ with $\deg X \in \{3,4,5\}$. 
To complete our classification, in the following theorem we show that there exists no non-trivial weak Fano bundles when $\deg X \leq 2$.

\begin{thm}\label{mainthm-Ishikawa}
Let $X$ be a del Pezzo $3$-fold of degree $d \leq 2$. 
Then every rank $2$ weak Fano bundle on $X$ is the direct sum of some line bundles. 
\end{thm}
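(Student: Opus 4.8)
The plan is to run the same dichotomy machinery developed for $\deg X\in\{3,4,5\}$ (and recalled in Section~\ref{sec-summary}) and show that, when $d\le 2$, the non-split branch is empty. So let $\mcE$ be a normalized rank $2$ weak Fano bundle on a del Pezzo $3$-fold $X$ with $\deg X=d\le 2$, and suppose $\mcE$ is indecomposable; I want a contradiction. First I would recall that a weak Fano bundle is in particular such that $-K_{\P_X(\mcE)}$ is nef, which translates into a numerical constraint on $(c_1,c_2)$: writing $-K_X=2H$ with $H$ the ample generator, nefness of $-K_{\P(\mcE)}=2\xi+(-K_X-c_1)$ on the fibers and on curves in $X$ forces, after normalizing $c_1\in\{0,-1\}$, an inequality bounding $c_2H$ from above in terms of $d$ (concretely $0\le c_2\le$ something like $\lfloor d/?\rfloor$, and for $d\le 2$ this pins $c_2$ to a very short list). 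The key point is that on a del Pezzo $3$-fold of small degree the Hilbert-curve-type inequalities coming from the extremal contractions of a weak Fano $\P(\mcE)$ are much more restrictive than for $d=5$, essentially because $H^3=d$ is tiny.

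The main steps, in order: (1) Use the nef and big conditions on $-K_{\P_X(\mcE)}$, together with the Bogomolov-type inequality for a semistable bundle (or the splitting criterion in the unstable case), to bound $c_2(\mcE)\cdot H$; conclude that the only numerically possible indecomposable cases have $c_1=0$ and $c_2\in\{1,2\}$ (the $c_1=-1$ normalized case I expect to be killed immediately, as there is no rank $2$ bundle with the required small $c_2>0$ other than split ones on such $X$). (2) In each surviving numerical case, produce a destabilizing sub-line-bundle or a global section: for $c_2$ small one shows $h^0(\mcE(1))>0$ or $h^0(\mcE)>0$ by Riemann--Roch plus vanishing (Kodaira/Kawamata--Viehweg on the weak Fano $3$-fold $X$, or on $\P_X(\mcE)$), which gives a nonzero section $\mcO_X\hra\mcE(t)$, hence an exact sequence $0\to\mcO_X(t)\to\mcE\to\I_Z(c_1-t)\to 0$ with $Z$ a curve (or empty) of degree controlled by $c_2$. (3) Analyze $Z$: weak Fano-ness of $\P_X(\mcE)$ restricts which subschemes $Z$ can appear (this is where \cite{Ishikawa16,FHI20} did the analogous bookkeeping); for $d\le 2$ the bound on $\deg Z$ forces $Z=\emptyset$, so the sequence splits and $\mcE$ is decomposable, contradiction. (4) Finally, check that the split candidates which remain, namely $\mcO_X(a)\oplus\mcO_X(b)$, are weak Fano for the appropriate $(a,b)$ — but the statement only claims \emph{every} weak Fano bundle splits, so this last point is not even needed for the theorem as stated.

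I expect the main obstacle to be Step~(2)--(3): ruling out a genuinely non-split extension with a $1$-dimensional $Z$. The Riemann--Roch estimate alone may be too weak when $d=1$, since the Euler characteristics are small and the relevant $H^1$ need not vanish a priori; so the argument likely has to combine (a) the restriction of $\mcE$ to a general member of $|H|$ (a del Pezzo surface of degree $d$, respectively a $K3$-type surface when $d$ is very small — one has to be careful since $|H|$ may not be very ample for $d=1,2$, and one may need to pass to $|2H|$ or to the universal family of lines/conics), (b) the Grauert--Mülich-type bound on the generic splitting type forced by semistability, and (c) the nef cone description of $\P_X(\mcE)$ from the weak Fano hypothesis to exclude the remaining curve classes for $Z$. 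An alternative, and perhaps cleaner, route for $d\le 2$ is to bypass sections entirely: show directly that if $\mcE$ is indecomposable then $\P_X(\mcE)$ has an extremal ray whose contraction is incompatible with $\rho(X)=1$ and $H^3=d\le 2$ — i.e.\ mimic Szurek--Wisniewski's analysis of the contractions of $\P(\mcE)$ but now using the smallness of $d$ to leave no room for any contraction other than the projective bundle map itself, which would force $-K_{\P(\mcE)}$ to fail bigness unless $\mcE$ splits. Either way, the degree-$\le 2$ smallness is doing all the work, and the write-up should be short once the right inequality is isolated.
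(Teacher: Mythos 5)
Your overall strategy (numerical constraints from nef-and-bigness of $-K_{\P_X(\mcE)}$, then a section/splitting analysis via Proposition~\ref{prop-Ishikawa}-type statements) is the right one, and Steps (1)--(2) do match the paper in spirit: the paper first invokes Proposition~\ref{prop-Ishikawa} to dispose of $c_{2}\leq 0$ (split) and $c_{2}=1$ (impossible for $d\leq 2$), and then uses $(-K_{\P(\mcE)})^{4}>0$ to bound $c_{2}$ from above. For $c_{1}=0$ this already finishes the proof, since bigness gives $c_{2}(\mcE)<d\leq 2$ directly.

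However, there is a genuine gap at the point where you say the normalized $c_{1}=-1$ case is ``killed immediately.'' It is not: for $c_{1}=-1$ the bigness inequality reads $(-K_{\P(\mcE)})^{4}=80d-64c_{2}>0$, which for $d=2$ still allows $c_{2}=2$, and this is exactly the case that survives every easy numerical and cohomological bound. The paper has to work here: Le Potier vanishing plus Hirzebruch--Riemann--Roch give $h^{0}(\mcE(1))\geq\chi(\mcE(1))=2$, so $\xi_{\mcE}+\pi_{\mcE}^{\ast}H_{X}$ is linearly equivalent to an effective divisor, and then nefness of $-K_{\P(\mcE)}$ forces $0\leq(-K_{\P(\mcE)})^{3}(\xi_{\mcE}+\pi_{\mcE}^{\ast}H_{X})=-2$, a contradiction. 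Your Steps (2)--(3) (take the section, get $0\to\mcO(-1)\to\mcE\to\mcI_{Z}\to 0$ with $Z$ a degree-$2$ curve, and argue $Z=\emptyset$) could in principle be made to work for this case, but as written you give no mechanism for excluding a nonempty $Z$ of degree $2$ when $d=2$ --- the ``bound on $\deg Z$ forces $Z=\emptyset$'' assertion is precisely what needs proof, and the paper replaces it with the single intersection-number computation above. The auxiliary tools you float (Bogomolov, Grauert--M\"ulich, restriction to members of $|H|$ or $|2H|$, Szurek--Wi\'sniewski contraction analysis) are not needed and are not what the paper does; the whole proof is Proposition~\ref{prop-Ishikawa} plus two intersection numbers. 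To repair your write-up, drop the claim that $c_{1}=-1$ is immediate, isolate the case $(d,c_{1},c_{2})=(2,-1,2)$, and supply the effectivity-of-$\xi_{\mcE}+\pi^{\ast}H_{X}$ argument (or an equivalent exclusion of the degree-$2$ subscheme $Z$).
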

Hence, combining our previous articles \cite{Ishikawa16,FHI20} with Theorem \ref{mainthm-resol} and \ref{mainthm-Ishikawa}, we finally obtain a classification of weak Fano bundles on del Pezzo $3$-folds of Picard rank $1$.

%{mainthm-resol}

\subsection{Strategy for Proofs}\label{subsec-PlanOfProof}

%Let us see our plan of proof of these main results. 
Whilst the proof of Theorem~\ref{mainthm-Ishikawa} will employ a similar strategy used in \cite{Ishikawa16,FHI20},
new ideas are needed  for proving Theorems~\ref{mainthm-resol} and \ref{mainthm-moduli}. 
In this section, we explain a brief overview of these.

\subsubsection{Strategy for Proof of Theorem~\ref{mainthm-resol}}

Let $X$, $\mcR$, and $\mcQ$ be as in Section~\ref{subsec-intro-resol-dP5},
and $\mcE$ a weak Fano bundle on $X$. 
When $c_{1}(\mcE)=-1$, we have $\mcE|_{l} \simeq \mcO_{\P^{1}}(-1) \oplus \mcO_{\P^{1}}$ for every line $l$ on $X$. 
This allows us to characterize $\mcE$ by using the Hilbert scheme of lines on $X$ and its description which was given mainly by Furushima-Nakayama \cite{FN89} (see Section~\ref{sec-c1odd}).

When $c_{1}(\mcE)=0$, we study $\mcE$
for each possible value of its 2nd Chern class $c_{2}(\mcE)$. 
If $\mcE$ is indecomposable, then $c_{2}(\mcE) \geq 1$ (see Proposition~\ref{prop-Ishikawa}). 
%\cite[Lemma~3.1]{Ishikawa16}. 
When $c_{2}(\mcE)=1$, then the resolution in Theorem~\ref{mainthm-resol}~(v) is obtained by using the sequence $0 \to \mcO_{X} \to \mcE \to \mcI_{l/X} \to 0$. 
When $c_{2}(\mcE) \geq 2$, 
then $\mcE$ is an instanton bundle by \cite[Corollary~4.7]{FHI20}. Moreover, Faenzi \cite{Faenzi14} and Kuznetsov \cite{Kuznetsov12} gave a monad (= a self-dual three-term complex) for each instanton bundle, which yields Theorem~\ref{mainthm-resol}~(vi) for $c_{2}(\mcE) = 2$. 
Thus, the non-trivial part of the proof of Theorem~\ref{mainthm-resol} is for $c_{1}(\mcE)=0$ and $c_{2}(\mcE) \geq 3$.

To study such an $\mcE$, we use the bounded derived category $\Db(X)$ of coherent sheaves on $X$, which admits a full exceptional collection $\Db(X) = \langle \mcO_{X}(-1),\mcQ(-1),\mcR,\mcO_{X} \rangle$ \cite{Orlov91}. 
By our previous work \cite[Theorem~1.7]{FHI20}, $\mcE(1)$ is globally generated,
and thus there is an exact sequence $0 \to \mcK \to H^{0}(\mcE(1)) \otimes \mcO_{X} \to \mcE(1) \to 0$. 
%To study these cases, we use the bounded derived category $\Db(X)$ of coherent sheaves on $X$.
%It is known by Orlov \cite{Orlov91} that the derived category admits a full exceptional collection $\Db(X) = \langle \mcO(-1),\mcQ(-1),\mcR,\mcO \rangle$.
%{\color{red} Thanks to our previous work \cite[Theorem~1.7]{FHI20}, 
%for every rank $2$ weak Fano bundle $\mcE$ on $X$ with $c_{1}(\mcE)=0$, $\mcE(1)$ is globally generated. 
Together with the Kawamata-Viehweg vanishing, this exact sequence shows that the left mutation $\LL_{\mcO_{X}}(\mcE(1))$ is concentrated in the single degree $-1$. 
As $\LL_{\mcO_{X}}(\mcE(1)) \in \braket{\mcO_{X}(-1),\mcQ(-1),\mcR}$, 
the right mutation $\RR_{\mcO_{X}(-1)}\LL_{\mcO_{X}}(\mcE(1))$ belongs to $\braket{\mcQ(-1),\mcR}$.
%and it will be shown that this is also concentrated in the single degree $-1$. 
By showing some cohomology groups associated with $\mcE$ vanish, 
\begin{comment}
The strategy to show the existence of resolutions in Theorem~\ref{mainthm-resol} is to use the theory of mutations over exceptional objects (see Section REF).
Namely, we will compute the mutated object $\RR_{\mcO_{X}(-1)}\LL_{\mcO_{X}}(\mcE(1))$ for a weak Fano bundle $\mcE$, 
and 
\end{comment}
it will turn out that there exists a vector bundle $\mcV$ such that  $\mcV \simeq \RR_{\mcO_{X}(-1)}\LL_{\mcO_{X}}(\mcE(1))[-1]$ and $\mcV$ lies in an exact sequence 
\[ 0 \to \mcO_{X}(-1) \otimes H^{1}(\mcE) \to \mcV \to \mcO_{X} \otimes H^{0}(\mcE(1)) \to \mcE(1) \to 0. \]
%\[ 0 \to \mcO_{X}(-1)^{\oplus c_2(\mcE) -2} \to \mcV \to \mcO_{X}^{\oplus (14 - 2c_2(\mcE))} \to \mcE(1) \to 0. \]
Thus the problem comes down to giving a concrete description  of the vector bundle $\mcV$.
The fact that $\mcV$ is contained in the category $\braket{\mcQ(-1),\mcR}$ generated by the two-term exceptional collection gives an exact sequence in Lemma~\ref{lem-conclu},
which brings the problem down to the calculation of $\Ext^{\bullet}(\mcR,\mcE)$ and $\Ext^{\bullet}(\mcQ(-1),\mcE)$.
%\footnote{$\ast$か$\bullet$かは悩みどころ}
%However, by the definition of mutations, giving the explicit computation of the mutation $\RR_{\mcO_{X}(-1)}\LL_{\mcO_{X}}(\mcE(1))$ is almost equivalent to give calculations of $\Ext$ groups between vector bundles, which is not an easy question in general.
%Actually, the most difficulty of the proof of Theorem~\ref{mainthm-resol} comes from the difficulty of giving the computations (or the vanishings) of $\Ext^{\ast}(\mcR,\mcE)$ and $\Ext^{\ast}(\mcQ(-1),\mcE)$.
%However, these computations are not easy even for $\Ext^0$, i.e.\! $\Hom$-spaces.
%The usual strategy using slope stability is not so useful, and we will need more detailed geometric observation for weak Fano bundles.
%When $c_2(\mcE) = 3$, the computation is already worked out by Sanna [REF].
%When $c_2(\mcE) = 4$, we establish a new numerical lemma for rank two nef bundles in Lemma \ref{lem-codim2class}, and applying it to our situation gives the calculation.
%In our previous work, it turned out that a weak Fano bundle on $X$ with $c_1(\mcE) = 0$ and $c_2(\mcE) \leq 2$ is an instanton bundle in the sense of Kuznetsov.
%However, we emphasise that the calculation for $\Ext$-groups above is not a consequence of instanton property.
%Indeed, it will be proved in REF that the vanishing $\Ext^1(\mcQ(-1), \mcE) = 0$ characterises weak Fano bundles among the class of instanton bundles.

The most technical part of the proof of Theorem~\ref{mainthm-resol} is to compute $\Ext^{\bullet}(\mcR,\mcE)$ and $\Ext^{\bullet}(\mcQ(-1),\mcE)$. 
The computation of these $\Ext$-groups is more complicated than it appears, and would not be deduced from stability or instatonic property of $\mcE$.
Indeed, we will see in Proposition~\ref{prop-chara} that the vanishing $\Ext^1(\mcQ(-1), \mcE) = 0$ is a strong condition that characterizes weak Fano bundles among all instanton bundles.
From the vanishing of $\Ext^{1}(\mcQ(-1),\mcE)$, it is easy to show that $\mcE$ is weak Fano, but the proof of the converse essentially requires an inequality that follows from the positivity of nef bundles (Lemma~\ref{lem-codim2class}). This suggests that the above $\Ext$-group reflects the positivity of $\mcE(1)$.

Our approach using exceptional collections is inspired by Ohno's work \cite{Ohno16v7}. 
In that paper, he classified nef vector bundles on the projective space using a full strong exceptional collection of its derived category and  the spectral sequence, which is called the Bondal spectral sequence. 
In contrast, we use mutations over exceptional collections and Serre functors for admissible subcategories instead of the Bondal spectral sequence.

%Our approach using exceptional collection is inspired by Ohno's work, which gave linear resolutions of nef vector bundles on the projective space $\P^{n}$ using a full strong exceptional collection of its derived category  \cite{Ohno16v7}. The new point of our approach is to replace spectral sequence arguments in \cite{Ohno16v7} with the arguments using mutations and Serre functors for admissible subcategories.

\subsubsection{Plan of Proof of Theorem~\ref{mainthm-moduli}}

To prove Theorem~\ref{mainthm-moduli}, 
we will use the resolution in Theorem~\ref{mainthm-resol}~(viii). 
Using this resolution allows us to construct an open embedding 
$M^{\wF}_{0,4} \to M^{\Theta\text{-st}}_{(2,2)}(Q)$, 
where 
%{\color{red} 
$\Theta$ is a certain stability
%} 
and 
$M^{\Theta\text{-st}}_{(2,2)}(Q)$ is the moduli space of 
$\Theta$-stable representations of 
the $5$-Kronecker quiver $Q$ with dimension vector $(2,2)$. 
%For the definition, see ????. 
Thus the irreducibility of $M_{0,4}^{\wF}$ follows from that of $M^{\Theta\text{-st}}_{(2,2)}(Q)$. 
Moreover, the observation of natural Brauer-Severi schemes on $M^{\Theta\text{-st}}_{(2,2)}(Q)$ by \cite{RS17} immediately shows that $M^{\wF}_{0,4}$ cannot carry the universal bundle, i.e., $M^{\wF}_{0,4}$ is not fine. 

The moduli space $M^{\Theta\text{-sst}}_{(2,2)}(Q)$ of semi-stable representations has the following geometric feature. Chung-Moon \cite{Chung-Moon17} showed that $M^{\Theta\text{-sst}}_{(2,2)}(Q)$ is isomorphic to a variety $T_{4}$ that was introduced by Hosono-Takagi \cite{HT15}. 
%and Chung-Moon \cite{Chung-Moon17} independently. 
This space $T_{4}$ is given as the Stein factorization of $U_{4} \to S_{4}$, where $S_{4} \subset |\mcO_{\P^{4}}(2)|$ is the closed subscheme that parametrizes the singular hyperquadrics in $\P^{4}$, and $U_{4}:=\{([\Pi],Q) \in \Gr(2,5) \times S_{4} \mid \Pi \subset Q\}$ is the incidence variety of $2$-planes and $S_{4}$. 
Actually, we can also conclude that there is no universal bundle on $M_{0,4}^{\wF}  \times X$ by seeing this fibration structure $U_{4} \to T_{4}$. 
In both way, the essential part is that the non-vanishing of certain Brauer classes on $T_{4}$ is namely an obstruction for the existence of the universal bundle. 

\subsection{Organization of this article}
In Section~\ref{sec-prelim}, we prepare our notation and background on bounded derived categories of coherent sheaves. 
In Section~\ref{sec-c1odd}, we study weak Fano bundles $\mcE$ on a del Pezzo $3$-fold of degree $5$ with $c_{1}(\mcE)=-1$. 
In Section~\ref{sec-resol-dP5} we study those with $c_{1}(\mcE)=0$, which is the key part of this article. 
%In Section~\ref{sec-c1odd} (resp. \ref{sec-resol-dP5}), we study weak Fano bundles $\mcE$ on a del Pezzo $3$-fold of degree $5$ with $c_{1}(\mcE)=-1$ (resp. $0$). 
%In Section~\ref{sec-conclusions}, we prove Theorems~\ref{mainthm-resol} and \ref{mainthm-Ishikawa} and obtain our classification result for rank $2$ weak Fano bundles on del Pezzo $3$-folds of Picard rank $1$. 
In Section~\ref{sec-moduli}, we apply results obtained in Section~\ref{sec-resol-dP5} to study moduli spaces of rank $2$ weak Fano bundles on a del Pezzo $3$-fold of degree $5$, and prove Theorem~\ref{mainthm-moduli}. 
%To conclude this article, we obtain a cohomological characterization of weak Fano bundles in instanton bundles on a del Pezzo $3$-fold of degree $5$ in Section~\ref{sec-chara}. 

\begin{NaC}
Throughout this article, we will work over an algebraically closed field $\Bbbk$ of characteristic zero. 
We also adopt the following convention. 
\begin{itemize}
\item We regard vector bundles as locally free sheaves. 
For a vector bundle $\mcE$ on a smooth projective variety $X$, 
we define $\P_{X}(\mcE):=\Proj \Sym \mcE$. 
%We say that $\mcE$ is \emph{weak Fano} if $\P_{X}(\mcE)$ is a weak Fano manifold, that is, $-K_{\P_{X}(\mcE)}$ is nef and big. 
\item In this article, a \emph{del Pezzo $3$-fold} is a smooth Fano $3$-fold $X$ whose Fano index $r(X)=\min \{r \mid -K_{X} \sim rH \text{ for some } H \in \Pic(X)\} $ is $2$. 
For a del Pezzo $3$-fold $X$, $\mcO_{X}(1) \in \Pic(X)$ denotes the polarization such that $\mcO_{X}(2) \simeq \mcO_{X}(-K_{X})$. 
%\item On a del Pezzo $3$-fold $X$ of Picard rank $1$, an \emph{instanton bundle} $\mcE$ is defined to be a rank $2$ slope stable vector bundle such that $c_{1}(\mcE)=0$ and $h^{1}(\mcE(-1))=0$ \cite{Faenzi14,Kuznetsov12}. 
\item For the Grassmannian varieties, we employ the classical notation as follows; for an $n$-dimensional vector space $V=\Bbbk^{n}$ and a positive integer $m<n$, 
we define the Grassmannian variety $\Gr(m,V)=\Gr(m,n)$ as the parameter space of $m$-dimensional linear subspaces of $V=\Bbbk^{n}$. 
In particular, if we regard $V$ as a locally free sheaf on $\Spec \Bbbk$, 
then $\P(V)$ is canonically isomorphic to $\Gr(n-1,V)$ in our notation. 
\item The derived global section and the derived Hom are denoted by $\RG$ and $\RHom$, respectively.
For the definitions of these functors and other foundations of derived categories, we refer to \cite{Huybrechts06}.
\end{itemize}
\end{NaC}

\begin{ACK}
The first author is grateful to Professor Yoshinori Gongyo for his warm encouragement. 
He is also grateful to Doctor Naoki Koseki for answering his questions about moduli spaces and the representability of certain functors. 
The authors thank the anonymous referee for their careful reading and a lot of advice to improve the expositions.
The authors are grateful to Professor Masahiro Ohno for pointing out our errors in Proposition~\ref{prop-moduli-prelim}~(2) in the previous version of this paper and kindly discussing with us. The second author was supported by EP/R034826/1. 
%The second and third authors are grateful to {\color{red}???} for fruitful discussions, helpful comments, and their encouragement. 
%The second author was supported by JSPS Research Fellowship for Young Scientists (JSPS KAKENHI Grant Number 17J00857). 
\end{ACK}
 
\section{Preliminaries}

\begin{comment}
The present paper studies weak Fano bundles over del Pezzo threefolds $X$ of Picard rank one, equivalently del Pezzo threefolds with $\deg(X) \leq 5$.
In this case, $H^2(X,\Z) \simeq \Z \cdot H$ and $H^4(X,\Z) \simeq \Z \cdot l$, where $H$ is the class of hyperplanes and $l$ is the class of lines on $X$.
Using these generators, the first and second Chern classes $c_1$ and $c_2$ are often identified with integers.
A vector bundle $\mcE$ over $X$ is called \emph{normalized} if $c_1(\mcE) \in \{-\rk(\mcE) + 1, \cdots, -1, 0\}$. 
\end{comment}

\subsection{Weak Fano bundles on del Pezzo threefolds of Picard rank one}

Let $X$ be a del Pezzo $3$-fold of Picard rank $1$ and $\mcE$ a vector bundle on $X$. 
Let $\pi \colon \P_{X}(\mcE) \to X$ be the projectivization and $\xi_{\mcE}$ be a tautological divisor. 
As in section~1, $\mcE$ is called a \emph{weak Fano bundle} if $-K_{\P_{X}(\mcE)}$ is nef and big \cite{Langer98}. 
We also recall the notion of the nefness and the ampleness of the vector bundles; $\mcE$ is called a \emph{nef} (resp. \emph{ample}) vector bundle if $\xi_{\mcE}$ is nef (resp. ample) \cite{Laz}. 

In this article, we often identify $N^{i}(X)_{\Z} \simeq \Z$ by taking the effective generator class for each $i \in \{0,1,2,3\}$, 
where $N^{i}(X)_{\Z}$ is the numerical class group of the codimension $i$ cycles with $\Z$-coefficients. 
In this article, the $i$-th Chern class $c_{i}(\mcE)$ is often regarded as a numerical class. 
We also say that a rank $2$ vector bundle $\mcE$ on a $X$ is \emph{normalized} when $c_{1}(\mcE) \in \{0,-1\}$. 
\begin{rem}\label{rem-amp}
If $\mcE$ is of rank $2$, then $-K_{\P(\mcE)} \sim 2\xi_{\mcE}-\pi^{\ast}(K_{X}+\det \mcE)$, where $\mcO_{X}(-K_{X}) \simeq \mcO_{X}(2)$ on $X$. 
Hence if $\mcE$ is a normalized weak Fano bundle of rank $2$, then $\mcE(2)$ is ample. 
Moreover, $\mcE(1)$ is nef if $c_{1}(\mcE)=0$. 
\end{rem}
\begin{rem}\label{rem-st}
We also follow the notion of \emph{slope stability} and \emph{Gieseker stability} defined in \cite{OSS80,HL10}. 
If $\mcE$ is of rank $2$ and normalized, the following conditions are equivalent.
\begin{itemize}
\item $\mcE$ is slope stable. 
\item $\mcE$ is Gieseker-stable.
\item $\mcE$ is simple. 
\item $h^{0}(\mcE)=0$. 
\end{itemize}
For proofs, see \cite[Lemma~1.2.13]{HL10}, \cite[Corollary~1.2.8]{HL10}, \cite[Ch.~2, Theorem~1.2.10 and Lemma 1.2.5]{OSS80} and the proofs of them for example. 
\end{rem}

%By definition, the property to be a weak Fano bundle is stable under line bundle twists,
%thus for the classification purpose, we may assume that a bundle is normalised.

%\textcolor{red}{ampleとnef性に関するRemark？}

The following proposition summarises fundamental properties of rank $2$ weak Fano bundles on del Pezzo $3$-folds. 
Recall that a \textit{line} is an irreducible curve $l \subset X$ such that $\deg \mcO(1)|_l = 1$.

\begin{prop}\label{prop-Ishikawa}
Let $X$ be a del Pezzo $3$-fold of degree $d \leq 5$. 
Let $\mcE$ be a normalized rank $2$ weak Fano vector bundle on $X$. 
\begin{enumerate}
\item $\chi(\mcE(n))=\frac{d}{3}n^{3}+\frac{(c_{1}+2)d}{2}n^{2}+\frac{(3c_{1}+4)d-6c_{2}+12}{6}n+\frac{1}{2}(c_{1}+2)(2-c_{2})$.
\item The following vanishing of cohomology hold.
\[
H^{i}(\mcE(n)) = 0
\begin{cases}
\text{if $n \geq 0$ and 
$i \geq \max\{2-\frac{n}{c_{1}^{2}+1},1\}$, or} \\
\text{if $n \leq (-2-c_{1})$ and 
$i \leq \min\{2-\frac{n+3}{c_{1}^{2}+1},2\}$.}
\end{cases}
\]
\item $\frac{4+c_{1}^{2}}{4}d > c_{2}$. 
\item $c_{2}(\mcE) \leq 0$ if and only if $\mcE$ is a direct sum of line bundles. 
\item The following are equivalent. 
	\begin{enumerate}
	\item $c_{2}(\mcE) = 1$. 
	\item $c_{2}(\mcE)>0$ and $h^{0}(\mcE) \neq 0$. 
	\item $d=\deg X \geq 3$ and there is an exact sequence $0 \to \mcO_{X} \to \mcE \to \mcI_{l/X} \to 0$, where $l$ is a line on $X$.
	\end{enumerate}
	In particular, if $\deg X \leq 2$, then $c_{2}(\mcE) \neq 1$. 
\item  $c_{2}(\mcE) \geq 2$ if and only if $h^{0}(\mcE)=0$. 
\end{enumerate}
\end{prop}
\begin{proof}
%Suppose that $\mcE$ is indecomposable. 
Let $c_{i} \in \Z$ be the integer such that $\det \mcE=\mcO_{X}(-c_{1})$ and $c_{2}(\mcE)=c_{2} \cdot l \in H^{4}(X,\Z)$. Then $c_{1} \in \{-1,0\}$.

(1) follows from the Hirzebruch-Riemann-Roch theorem. 

(2) 
First, recall that $\mcE(2)$ is ample by Remark \ref{rem-amp}.
Thus 
the Le Potier vanishing \cite[Theorem 7.3.5]{Laz} yields that $H^i(X, \mcE(n)) = 0$ for all $i \geq 2$ and $n \geq 0$.
In addition,  the Kawamata-Viehweg vanishing (applied to $\mcO_{\P(\mcE)}(1) \otimes \pi^{\ast}\mcO_{X}(n)$ on $\P(\mcE)$ together with the projection formula)
implies that $H^1(X, \mcE(n)) = 0$ for all $n \geq 2$.
Furthermore, if $c_1(\mcE) = 0$, since $\mcE(1)$ is nef, the Kawamata-Viehweg vanishing also gives $H^1(X, \mcE(n)) = 0$ for all $n \geq 1$.
Summarising all the above gives the first statement.
The second statement follows from the first statement.
Indeed, since $\mcE$ is rank two, $\mcE^{\vee} \simeq \mcE(-c_1)$.
Thus the Serre duality gives an isomorphism $H^i(X, \mcE(n)) \simeq H^{3-i}(X, \mcE(-n - 2-c_1))^*$.
Using this allows us to check the second vanishing.

(3) follows from the inequality $0<(-K_{\P(\mcE)})^{4}=16(c_{1}^{2}d+4d-4c_{2})$. 

For (4), combining (1) and (2) shows that 
\begin{align}\label{eq-beta}
h^{0}(\mcE(-2))=0
\end{align}
and 
\begin{align}\label{eq-HRR}
h^{0}(\mcE) \geq \chi(\mcE)=\frac{1}{2}(c_{1}+2)(2-c_{2})
\end{align}
for every normalized weak Fano bundle $\mcE$ of rank $2$. 
Thus it suffices to see that every normalized weak Fano bundle $\mcE$ of rank $2$ with $c_{2} \leq 0$ decomposes into a direct sum of line bundles. 

If $h^{0}(\mcE(-1)) \neq 0$, then there is an exact sequence $0 \to \mcO_{X}(1) \to \mcE \to \mcI_{Z/X}(-1+c_{1}) \to 0$, where $Z$ is a closed subscheme with $\codim_{X}Z \geq 2$ by (\ref{eq-beta}). 
%First, we treat the case when $h^{0}(\mcE(-1)) \neq 0$. 
%Since $h^{0}(\mcE(-2))=0$ by (2), we have 
If $c_{1}=-1$, then $\mcE(2)$ is ample by Remark~\ref{rem-amp} and there is a surjection $\mcE(2) \epm \mcI_{Z/X}$, which is a contradiction. 
Hence $c_{1}=0$. 
Then $\mcE(1)$ is nef by Remark~\ref{rem-amp} and there is a surjection $\mcE(1) \epm \mcI_{Z/X}$, which implies $Z=\emp$. 
Hence it holds that $\mcE \simeq \mcO_{X}(1) \oplus \mcO_{X}(-1)$. 
Suppose $h^{0}(\mcE(-1))=0$. 
By (\ref{eq-HRR}) and the condition $c_{2} \leq 0$, $h^{0}(\mcE)>0$ and hence there is an exact sequence 
$0 \to \mcO_{X} \to \mcE \to \mcI_{Z/X}(c_{1}) \to 0$, 
where $Z$ is a closed subscheme with $\codim_{X}Z \geq 2$. 
Then $Z \equiv c_{2}(\mcE)$ implies that $Z$ is empty since $c_{2} \leq 0$. 
Hence $\mcE \simeq \mcO_{X} \oplus \mcO_{X}(c_{1})$. 
%$c_{2}(\mcE) \geq 0$ and $c_{2}(\mcE)=0$ if and only if $\mcE \simeq \mcO_{X} \oplus \mcO_{X}(c_{1})$. 

(5) The implication (a) $\ra$ (b) follows from (1) and (2). 
We show (b) $\ra$ (c). 
If $c_{2}(\mcE)>0$ and $h^{0}(\mcE) \neq 0$, then by (4) and its proof, 
$\mcE$ is indecomposable and $h^{0}(\mcE(-1))=0$. 
As discussed in (4), there is an exact sequence 
$0 \to \mcO_{X} \to \mcE \to \mcI_{Z/X}(c_{1}) \to 0$ with $\codim_{X}Z \geq 2$. 
Let $\pi_{\mcE} \colon \P_{X}(\mcE) \to X$ be the projectivization of $\mcE$ and $\xi_{\mcE}$ be a tautological divisor. 
Since $h^{0}(\mcE)>0$, $\xi_{\mcE}$ is effective, 
which implies %$
%\smash[b]{
%0 \leq (-K_{\P_{X}(\mcE)})^{3}\xi_{\mcE} = 
%	\left\{{\tiny 
%		\begin{array}{ll}
%		-20c_{2}+d & \mathrm{if}\ c_1=-1\\
%		8(-3c_{2}+d) & \mathrm{if}\ c_1=0
%		\end{array}
%	}\right.
%}
%$.
$0 \leq (-K_{\P_{X}(\mcE)})^{3}\xi_{\mcE} =  
(c_{1}^{3}+6c_{1}^{2}+12c_{1}+8)d-(4c_{1}c_{2}+24c_{2})$.
Since $c_{2} > 0$ and $c_{1} \in \{0,-1\}$, it follows that $c_{1}=0$, $c_{2}=1$, and $d \geq 3$. 
Then $Z$ is of degree $c_{2}=1$, which implies $Z$ is a line. 
Hence (b) $\ra$ (c) holds. 
The implication (c) $\ra$ (a) is obvious.

(6) follows from (5) and (\ref{eq-HRR}). 
We complete the proof. 
%Since we already assumed $h^{0}(\mcE) \neq 0$, 
%we have $0 \to \mcO_{X} \to \mcE \to \mcI_{Z/X}(c_{1}) \to 0$, where $Z$ is the empty set or a closed subscheme with $\codim_{X}Z \geq 2$. 
\end{proof}

\subsection{Derived categories}\label{sec-prelim}

This section recalls some notations from triangulated and derived categories. 
The triangulated categories we are mainly interested in are the bounded derived categories of coherent sheaves on smooth projective varieties $X$, denoted by $\Db(X)$. 
We adopt the terminology of \cite{Huybrechts06}. 

Let $\mcD$ be an $\Ext$-finite $\Bbbk$-linear triangulated category. 
For objects $A,B \in \mcD$, we denote $\bigoplus_{i} \Hom_{\mcD}(A,B[i])[-i]$ by $\Ext^{\bullet}(A,B)$. 

\begin{defi}
\begin{enumerate}
\item An object $E \in \mcD$ is \textit{exceptional} if $\Hom(E, E) \simeq \Bbbk$ and $\Ext^i(E, E) =0$ for all $i \neq 0$.
\item A sequence of exceptional objects $E_1, E_2, \cdots, E_l$ is called an \textit{exceptional collection} if $\Ext^k(E_i, E_j) = 0$ for all $1 \leq j < i \leq l$ and $k \in \mathbb{Z}$.
\item A \textit{full} exceptional collection is an exceptional collection $E_1, E_2, \cdots, E_l$ that generates the whole category $\mcD$. Here generate means the smallest triangulated full subcategory that contains $E_1, E_2, \cdots, E_l$ agrees with $\mcD$.
\item A \textit{strong} exceptional collection is an exceptional collection $E_1, E_2, \cdots, E_l$ that in addition satisfies $\Ext^k(E_i, E_j) = $ for all $0 \leq i < j \leq l$ and $k \neq 0$.
\end{enumerate}
\end{defi}

An explicit example of a full strong exceptional collection will appear in Theorem \ref{Orlov}.

In this article, we define mutations of objects over exceptional objects as follows. 
\begin{defi}
Let $E$ be an exceptional object of $\mcD$ and $F$ an object of $\mcD$. 
\begin{enumerate}
\item The \emph{left mutation} $\LL_{E}(F)$ of $F$ over $E$ is defined to be the cone of the natural evaluation morphism $\ev \colon \Ext^{\bullet}(E, F) \otimes E \to F$: 
\[ \Ext^{\bullet}(E, F) \otimes E \xrightarrow{\mathrm{ev}} F \to \LL_E(F) \mathop{\to}^{+1} . \]
\item The \emph{right mutation} $\RR_{E}(F)$ of $F$ over $E$ is defined to be the cone of $\coev[-1] \colon F \to \Ext^{\bullet}(F,E)^{\vee} \otimes E[-1]$, where $\coev$ is the natural co-evaluation morphism: 
 \[ \RR_E(F) \to F \xrightarrow{\mathrm{coev}} \Ext^{\bullet}(F, E)^{\vee} \otimes E\mathop{\to}^{+1}. \]
\end{enumerate}
For a full subcategory $\mcA \subset \mcD$, we define
\begin{align*}
\langle \mcA \rangle^{\bot} &= \{ F \in \mcD \mid \Ext^{\bullet}(A, F) = 0 \text{ for all object $A \in \mcA$} \},  \\
{}^{\bot}\langle \mcA \rangle &= \{ F \in \mcD \mid \Ext^{\bullet}(F, A) = 0 \text{ for all object $A \in \mcA$} \}.
\end{align*}
\end{defi}

\begin{rem}\label{rem-cat}
If $\mcD = \braket{E_1, \cdots, E_r}$ is a full exceptional collection, then for any $1 \leq j \leq k \leq r$, 
\[ {}^{\bot} \braket{E_1, \cdots, E_{j-1}} \cap \braket{E_{k+1}, \cdots, E_r}^{\bot} = \braket{E_j ,\cdots, E_k}. \]
Furthermore, we collect the following well-known facts for later convenience. 
\begin{enumerate}
\item Let $E \in \mcD$ be an exceptional object. 
Then the mutations over $E$ define functors $\LL_{E} \colon \mcD \to \braket{E}^{\bot}$ and $\RR_{E} \colon \mcD \to {}^{\bot}\braket{E}$. 
% \[ \LL_E : \mcD \to \langle E \rangle^{\bot} ~ \text{and} ~ \RR_E : \mcD \to {}^{\bot}\langle E \rangle. \]
It is easy to see that the left mutation $\LL_E$ (resp. the right mutation $\RR_E$) over $E$ is the left (resp. right) adjoint functor of the inclusion $\langle E \rangle^{\bot} \hookrightarrow \mcD$ (resp. ${}^{\bot}\langle E \rangle \hookrightarrow \mcD$).
\item Mutations act on exceptional collections as follows. 
%Mutations act on exceptional collections; 
For a given exceptional collection 
$(E_{1},\ldots,E_{i},E_{i+1},\ldots,E_{r})$, 
the sequences 
$(E_{1},\ldots,E_{i-1},\LL_{E_{i}}(E_{i+1}),E_{i},E_{i+2},\ldots,E_{r})$ 
%$(\LL_{E_{1}}(E_{2}),E_{1})$
and 
$(E_{1},\ldots, E_{i-1},E_{i+1},\RR_{E_{i+1}}(E_{i}), E_{i+2},\ldots,E_{r})$
%$(E_{2},\RR_{E_{2}}(E_{1}))$
are also exceptional collections. 
These three collections generate the same category. 
\item Related to (2), we observe that Serre functors also act on exceptional collections. 
If $\mcD=\braket{E_{1},E_{2},\ldots,E_{r}}$ is a full exceptional collection and there is a Serre functor $S_{\mcD}$ of $\mcD$, 
then $\braket{S_{\mcD}(E_r), E_1, \cdots, E_{r-1}}$ and $\braket{E_2, \cdots, E_r, S_{\mcD}^{-1}(E_1)}$ are also full exceptional collections of $\mcD$. 
%then the following are also full exceptional collections: 
% \[ \mcD = \braket{S_{\mcD}(E_r), E_1, \cdots, E_{r-1}} = \braket{E_2, \cdots, E_r, S_{\mcD}^{-1}(E_1)}, \]
\end{enumerate}
\end{rem}

We also prepare the following easy lemma. 
\begin{lem}\label{lem-2term}
Let $(E_{1},E_{2})$ be an exceptional collection and $F \in \braket{E_{1},E_{2}}$ be an object. 
Then 
$\RR_{E_{1}}(F) \simeq \Ext^{\bullet}(E_{2},F) \otimes E_{2}$. 
In particular, there is a distinguished triangle
\[\Ext^{\bullet}(E_{2},F) \otimes E_{2} \to F \to \Ext^{\bullet}(F,E_{1})^{\vee} \otimes E_{1} \mathop{\to}^{+1}. \]
\end{lem}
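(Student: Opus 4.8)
The plan is to work directly with the defining triangle of the right mutation together with the assumption $F \in \braket{E_1,E_2}$ and the exceptionality of the pair. First I would unwind the definition: by Part~(2) of the definition of mutations, $\RR_{E_1}(F)$ sits in a distinguished triangle
\[ \RR_{E_1}(F) \to F \xrightarrow{\coev} \Ext^{\bullet}(F,E_1)^{\vee}\otimes E_1 \xrightarrow{+1}, \]
and by Remark~\ref{rem-cat}~(1) the right mutation $\RR_{E_1}$ is the right adjoint to the inclusion ${}^{\bot}\braket{E_1}\hookrightarrow\mcD$, so in particular $\RR_{E_1}(F) \in {}^{\bot}\braket{E_1}$. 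On the other hand, since $F$ lies in $\braket{E_1,E_2}$, which is closed under the functor $\RR_{E_1}$ (it preserves the generated subcategory, as it is computed by a cone on objects of that subcategory), we get $\RR_{E_1}(F) \in {}^{\bot}\braket{E_1}\cap\braket{E_1,E_2}$. By Remark~\ref{rem-cat} (the displayed identity with $j=k=2$, $r=2$), this intersection equals $\braket{E_2}$. Hence $\RR_{E_1}(F)$ is an object of $\braket{E_2}$, i.e.\ it is isomorphic to $V\otimes E_2$ for some graded vector space $V$; concretely $V = \Ext^{\bullet}(E_2, \RR_{E_1}(F))$, using that $E_2$ is exceptional.

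The second step is to identify $V$ with $\Ext^{\bullet}(E_2,F)$. Applying $\Ext^{\bullet}(E_2,-)$ to the defining triangle above yields a long exact sequence relating $\Ext^{\bullet}(E_2,\RR_{E_1}(F))$, $\Ext^{\bullet}(E_2,F)$, and $\Ext^{\bullet}(E_2,\,\Ext^{\bullet}(F,E_1)^{\vee}\otimes E_1)$. The last term is $\Ext^{\bullet}(F,E_1)^{\vee}\otimes\Ext^{\bullet}(E_2,E_1)$, which vanishes because $(E_1,E_2)$ is an exceptional collection (so $\Ext^{\bullet}(E_2,E_1)=0$). Therefore the connecting map gives an isomorphism $\Ext^{\bullet}(E_2,\RR_{E_1}(F)) \simeq \Ext^{\bullet}(E_2,F)$, and combined with the previous step we conclude $\RR_{E_1}(F)\simeq \Ext^{\bullet}(E_2,F)\otimes E_2$.

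Finally, the "in particular" statement follows by substituting this isomorphism back into the defining triangle of $\RR_{E_1}(F)$: replacing the leftmost term $\RR_{E_1}(F)$ by $\Ext^{\bullet}(E_2,F)\otimes E_2$ and rotating gives exactly
\[ \Ext^{\bullet}(E_2,F)\otimes E_2 \to F \to \Ext^{\bullet}(F,E_1)^{\vee}\otimes E_1 \xrightarrow{+1}. \]

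I expect the only subtle point to be the assertion that $\braket{E_1,E_2}$ is preserved by $\RR_{E_1}$ — this needs the remark that $\RR_{E_1}(F)$ is built as a cone of a morphism between $F\in\braket{E_1,E_2}$ and a direct sum of shifts of $E_1\in\braket{E_1,E_2}$, so it is a finite extension of objects in $\braket{E_1,E_2}$ and hence again lies there. Everything else is a routine application of the orthogonality built into an exceptional collection. An alternative to the ``intersection'' argument would be to directly test $\RR_{E_1}(F)$ against $E_1$ and $E_2[i]$ and invoke a semiorthogonal decomposition criterion, but the route through Remark~\ref{rem-cat} is shortest.
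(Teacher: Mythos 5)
Your proposal is correct and follows essentially the same route as the paper's proof: place $\RR_{E_1}(F)$ in ${}^{\bot}\braket{E_1}\cap\braket{E_1,E_2}=\braket{E_2}$, write it as a graded vector space tensored with $E_2$, and identify that vector space with $\Ext^{\bullet}(E_2,F)$ via the defining triangle and the vanishing $\Ext^{\bullet}(E_2,E_1)=0$. Your extra care in justifying that $\RR_{E_1}$ preserves $\braket{E_1,E_2}$ is a point the paper passes over silently, but the argument is the same.
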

\begin{proof}
It follows from Remark~\ref{rem-cat}~(1) that $\RR_{E_{1}}(F) \in {}^{\bot}\braket{E_{1}}=\braket{E_{2}}$. 
Since $E_{2}$ is an exceptional object, there exist some finite dimensional vector spaces $V_{i}$ such that 
$\RR_{E_{1}}(F) \simeq E_{2} \otimes \bigoplus_{i} V_{i}[i]$. 
It follows from the definition of the mutation that 
$\Ext^{\bullet}(E_{2},\RR_{E_{1}}(F)) \simeq \Ext^{\bullet}(E_{2},F)$. 
Since the left hand side of the above isomorphism is $\bigoplus_{i} V_{i}[i]$, 
$\RR_{E_{1}}(F) \simeq \Ext^{\bullet}(E_{2},F) \otimes E_{2}$. 
\end{proof}

Let $X$ be  a del Pezzo $3$-fold of degree $5$.
It is known that $X$ is isomorphic to a codimension $3$ linear section of $\Gr(2,5) \subset \mathbb{P}^9$.
Restricting the rank two universal subbundle (resp.~ rank three universal quotient bundle) on $\Gr(2,5)$ to $X$ gives the bundle $\mcR$ (resp.~$\mcQ$).
By construction, these bundles fits into the tautological exact sequence
\[ 0 \to \mcR \to \mcO_X^{\oplus 5} \to \mcQ \to 0. \]
Using them Orlov provided the following full strong exceptional collection, which is one of the key ingredients for our investigation of rank $2$ weak Fano vector bundles on $X$.

\begin{thm}\label{Orlov}
The derived category $\Db(X)$ of a del Pezzo $3$-fold of degree $5$ admits a full strong exceptional collection
\begin{align}\label{eq-Orlov}
\Db(X) = \langle \mcO_{X}(-1), \mcQ(-1), \mcR, \mcO_{X} \rangle.
\end{align}
\end{thm}

\begin{rem}
This full strong exceptional collection can be seen as the exceptional collection $\e_{2}$ in \cite[Theorem]{Orlov91} tensored by $\mcO_{X}(-1)$. 
\end{rem}

\begin{rem}
Note that by the definition of a full strong exceptional collection,
the statement of this theorem contains a lot of vanishings, like $\Ext^{\geq 1}(\mcR, \mcO_X) = 0$ and $\RHom(\mcR, \mcQ(-1)) = 0$, etc.
In later sections we freely use these vanishings.

In addition, the tautological sequence together with those vanishings implies $\RG(\mcQ) \simeq \Bbbk^{\oplus 5}$. This will be used in Lemma \ref{lem-vani-c24}.
\end{rem}

We also note that the bundles $\mcR$ and $\mcQ$ are slope stable since $H^0(\mcR) = 0$ and $H^0(\mcQ(-1)) = 0 = H^0(\mcQ^{\vee})$.
In addition, the Chern classes of $\mcR$ are given by $c_1(\mcR) = -1$ and $c_2(\mcR) = 2$, and those of $\mcQ$ are given by $c_1(\mcQ) = 1$, $c_2(\mcQ) = 3$, and $c_3(\mcQ) = 1$.
Thus the Chern character of $\mcQ$ is given by 
\[ \ch(\mcQ) =  \left(3, H_X, -\frac{1}{2}l, -\frac{1}{6} \right). \]
This will be used in the proof of Lemma~\ref{lem-vani-c24}.

\section{Rank $2$ weak Fano bundles with odd first Chern classes}\label{sec-c1odd}
%We conclude this article by proving Theorem~\ref{mainthm-c1odd}. 
%Let $X$ be a del Pezzo $3$-fold of degree $5$. 
Throughout Sections~\ref{sec-c1odd} and \ref{sec-resol-dP5}, 
Let $X$ denote a del Pezzo $3$-fold of degree $5$. 
We regard $X \subset \mathbb{P}^6$ as a codimension $3$ linear section of $\Gr(2,5)$ under the Pl\"{u}cker embedding. 
Let $\mcR$ and $\mcQ$ be the restriction of the universal rank $2$ subbundle and rank $3$ quotient bundle respectively. 

In this section, we deal with such an $\mcE$ with $c_{1}(\mcE)=-1$ by observing the Hilbert scheme of lines on a del Pezzo $3$-fold $X$ of degree $5$. 
The Hilbert scheme of lines on $X$ is well studied by several researchers, including the authors of \cite{FN89}, \cite{Kuznetsov12}. 
Since the ample generator $H$ gives the above embedding $X \subset \P^6$, a line on X is a $1$-dimensional linear subspace in $\P^6$ that is contained in $X$.
We use the following result, which is a somewhat rough statement compared to \cite{FN89}. 
\begin{thm}[{\cite[Theorem~I, Lemma~2.3, Lemma~2.1]{FN89}}]
\label{thm-FN}
There exists the morphisms
\[X \mathop{\gets}^{e} U = \P_{\P^{2}}(\mcG) \mathop{\to}^{\pi} \P^{2},\]
%\[\xymatrix{
%U=\P_{\P^{2}}(\mcG) \ar[r]^{\pi} \ar[d]_{e}& \P^{2} \\
%X,
%}\]
where 
\begin{itemize}
\item $\pi \colon \P_{\P^{2}}(\mcG)=U \to \P^{2}$ is the universal family of the lines on $X$, which is the projectivization of a rank $2$ vector bundle $\mcG$, 
\item $e \colon U \to X$ is the evaluation morphism, which is a finite morphism of degree $3$, and 
\item $\mcG$ is a rank $2$ globally generated vector bundle with $c_{1}(\mcG)=3$ and $c_{2}(\mcG)=6$. 
\end{itemize}
Moreover, if $\eta$ denotes a tautological divisor of $\P_{\P^{2}}(\mcG)$ and $L$ the pull-back of a line on $\P^{2}$ under 
$\pi \colon \P_{\P^{2}}(\mcG) \to \P^{2}$, 
then 
\begin{enumerate}
\item $e^{\ast}H_{X} \sim \eta+L$ for a hyperplane section $H_{X}$ on $X$, and 
\item $e^{\ast}l \sim \eta.L-L^{2}$ for a line $l$ on $X$. 
\end{enumerate}
\end{thm}
We refer to \cite{FN89} for the proof of Theorem~\ref{thm-FN} and more details. 
%\subsection{Uniform bundles with odd $1$st Chern classes}
Using Theorem~\ref{thm-FN}, 
we show the following proposition. 
\begin{prop}\label{prop-c1odd}
Let $\mcE$ be a rank $2$ bundle on $X$ with $c_{1}(\mcE)=-1$. 
Assume that $\mcE|_{l} \simeq \mcO_{l} \oplus \mcO_{l}(-1)$ for every line $l$ on $X$. 
Then $\mcE$ is $\mcO_{X} \oplus \mcO_{X}(-1)$ or $\mcR$. 
\end{prop}
\begin{proof}
%Take $c \in \Z$ such that $c_{2}(\mcE) \equiv c[l]$ for a line $l$. 
Set $\mcE_{U}:=e^{\ast}\mcE$. 
Then $\mcE_{U}|_{\pi^{-1}(p)} \simeq \mcO_{\P^{1}} \oplus \mcO_{\P^{1}}(-1)$ for every $p \in \P^{2}$. 
Hence there is an integer $a \in \Z$ such that $\mcO_{\P^2}(-a) \simeq \pi_{\ast}\mcE_{U}$. 
Note that the cokernel of the natural injection $\pi^{\ast}\mcO_{\P^{2}}(-a) \to \mcE_{U}$ is invertible. 
Since $\det \mcE_{U} \simeq e^{\ast}\mcO_{X}(-1) \simeq \mcO_{U}(-(\eta+L))$, there exists the following exact sequence: 
\begin{align}\label{ex-split}
0 \to \mcO_{U}(-aL) \to \mcE_{U}=e^{\ast}\mcE \to \mcO_{U}(-\eta+(a-1)L) \to 0. 
\end{align}
Thus $c_{2}(\mcE_{U})=(-aL)(-\eta+(a-1)L)=a\eta L-a(a-1)L^{2}$. 
On the other hand, by Theorem~\ref{thm-FN}~(2), $c_{2}(\mcE_{U})=e^{\ast}c_{2}(\mcE)$ must be divisible by $\eta.L-L^{2}$ in $N^{2}(U)_{\Z}$. 
Hence $a \in \{0,2\}$ and $c_{2}(\mcE)=a$. 

%Let us show that $\mcE \simeq \mcO_{X} \oplus \mcO_{X}(-1)$ if $a=0$. 
%When $a=c_{2}(\mcE)=0$, $\mcE \simeq \mcO_{X} \oplus \mcO_{X}(-1)$ by Proposition~\ref{prop-Ishikawa}~(4). 
%$\mcE$ splits into line bundles by Proposition~\ref{prop-Ishikawa}. 
%Thus $\mcE \simeq \mcO \oplus \mcO(-1)$. 
When $a=c_{2}(\mcE)=0$, 
%$\mcE \simeq \mcO_{X} \oplus \mcO_{X}(-1)$ by Proposition~\ref{prop-Ishikawa}~(4). 
the exact sequence (\ref{ex-split}) gives $\RG(U, e^*\mcE) \simeq \Bbbk$ and $H^0(U, e^*\mcE(-e^*H)) = 0$.
Since $e$ is a finite morphism, $Re_* \simeq e_*$ holds and $e_*e^*\mcE$ contains $\mcE$ as a direct summand.
Thus $H^i(\mcE) = 0$ for all $i > 0$, and $H^0(\mcE(-1)) = 0$.

Now, by the Hirzebruch-Riemann-Roch theorem, $h^0(\mcE) = \chi(\mcE) = 1$, and let $s \colon \mcO_X \to \mcE$ be the non-zero section.
Since $H^{0}(\mcE(-1))=0$, there exists a closed subscheme $Z \subset X$ with $\codim_X Z \geq 2$ such that $\Cok(s) \simeq \mcI_{Z/X}(-1)$.
The equality $Z \equiv c_2(\mcE) = 0$ yields that $Z$ is empty and hence $\mcE$ fits in the sequence
\[ 0 \to \mcO_X \to \mcE \to \mcO_X(-1) \to 0. \]
Thus the vanishing $\Ext_X^1(\mcO_X(-1), \mcO_X) = 0$ implies $\mcE \simeq \mcO_X \oplus \mcO_X(-1)$.

When $a=c_{2}(\mcE)=2$, 
%we have 
%\begin{align}\label{ex-split-a=2}
%0 \to \mcO_{U}(-2L) \to e^{\ast}\mcE \to \mcO_{U}(-\eta+L) \to 0.
%\end{align}
%Hence 
the exact sequence (\ref{ex-split}) shows $\RG(U,e^{\ast}\mcE)=0$. 
Since $e_{\ast}e^{\ast}\mcE$ has $\mcE$ as a direct summand, 
it holds that $\RG(X,\mcE)=0$. 
Note that $\mcE \simeq \mcE^{\vee}(-1)$ since $\mcE$ is of rank $2$ and $\mcE \otimes \mcE \to \bigwedge^{2} \mcE \simeq \mcO_{X}(-1)$ is a perfect pairing.
Hence
$\RHom(\mcE,\mcO_{X}(-1)) = \RG(\mcE^{\vee}(-1))=\RG(\mcE)=0$.
Hence $\mcE \in \braket{\mcO_{X}}^{\bot} \cap {}^{\bot}\braket{\mcO_{X}(-1)}$ in $\Db(X)$, which means that $\mcE \in \braket{\mcQ(-1),\mcR}$. 
Then Lemma~\ref{lem-2term} gives the following distinguished triangle:
\[\RHom(\mcR,\mcE) \otimes \mcR \to \mcE \to \RHom(\mcE,\mcQ(-1))^{\vee} \otimes \mcQ(-1) \mathop{\to}^{+1}.\]
Note that $\mcE$ is slope stable with slope $\mu(\mcE) = - 1/2$, since $h^{0}(\mcE)=0$ (see Remark~\ref{rem-st}). 
Similarly, $\mcQ$ is slope stable with slope $\mcQ(-1) = -2/3 < -1/2$.
Thus $\Hom(\mcE,\mcQ(-1))=0$. 
Taking the long exact sequence from the above distinguished triangle, we have 
\[0 \to \mcQ(-1)^{\oplus a} \to \mcR^{\oplus b} \to \mcE \to 0,\]
where $a=\ext^{1}(\mcE,\mcQ(-1))$ and $b=\hom(\mcR,\mcE)$. 
%where $a=\ext^{1}(\mcE,\mcQ(-1))$ and $b=\hom(\mcR,\mcE)$. 
Computing the rank and the 1st Chern class, 
we have $3a+2=2b$ and $-2a-1=-b$, 
which implies $a=0$ and $b=1$. 
Thus $\mcE \simeq \mcR$ as claimed.  
\end{proof}

\begin{cor}\label{cor-c1odd}
Let $X$ be a del Pezzo $3$-fold of degree $5$. 
Then every rank $2$ weak Fano bundle $\mcE$ on $X$ with $c_{1}(\mcE)=-1$ 
is isomorphic to $\mcO_{X} \oplus \mcO_{X}(-1)$ or $\mcR$. 
\end{cor}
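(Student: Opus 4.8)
\textbf{Proof proposal for Corollary~\ref{cor-c1odd}.}
The plan is to reduce the statement to Proposition~\ref{prop-c1odd}, for which the only missing ingredient is the hypothesis that $\mcE|_{l} \simeq \mcO \oplus \mcO(-1)$ for every line $l \subset X$. So the first step is to record the standard fact that a rank $2$ weak Fano bundle $\mcE$ with $c_{1}(\mcE)=-1$ restricts in this balanced way to every line. For this I would argue as follows: since $\mcE$ is normalized with $c_{1}(\mcE)=-1$, on any line $l$ we have $\mcE|_{l} \simeq \mcO_{\P^{1}}(-1-k) \oplus \mcO_{\P^{1}}(k)$ for some $k \geq 0$, and it suffices to exclude $k \geq 1$. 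The projectivization $\P_{X}(\mcE)$ carries a weak Fano structure, and restricting the anticanonical divisor to the ruled surface $\P_{l}(\mcE|_{l})$ over $l$ must remain nef; a Hirzebruch surface $\mathbb{F}_{2k+1}$ with $2k+1 \geq 3$ contains a section whose intersection with the relevant anticanonical class is negative, contradicting nefness. (This is exactly the dichotomy already invoked in Section~\ref{subsec-PlanOfProof} and implicitly used throughout Section~\ref{sec-c1odd}; I would phrase it as a short lemma or cite the analogous statement from \cite{FHI20} or \cite{SW90}.)

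With the splitting type on every line established, the second step is simply to apply Proposition~\ref{prop-c1odd}, which concludes $\mcE \simeq \mcO_{X} \oplus \mcO_{X}(-H_{X})$ or $\mcE \simeq \mcR$. The third (cosmetic) step is to check that both of these are genuinely weak Fano so that the corollary is an honest characterization and not merely an upper bound: $\mcO_{X} \oplus \mcO_{X}(-1)$ is the pullback situation giving a $\P^{1}$-bundle whose total space is a product-like del Pezzo fibration, and $\mcR$ is the universal subbundle whose projectivization is a well-understood Fano fourfold — both appear as cases (ii) and (iv) of Theorem~\ref{mainthm-resol}, where their weak Fano property is asserted, so I would just cross-reference that.

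I expect the only real content to be the first step — promoting "weak Fano" to "uniform splitting type on lines." The subtlety is that weak Fano (nef and big anticanonical), unlike Fano, does permit some jumping a priori, so one has to use the geometry of $\P_{X}(\mcE)$ carefully: the argument needs that $-K_{\P_{X}(\mcE)}$ restricted to each fiber-surface $\P_{l}(\mcE|_{l})$ is nef, and that for $\mathbb{F}_{m}$ with $m \geq 3$ this fails against the negative section. Once that is in place, everything else is bookkeeping, and the corollary follows immediately. The cleanest writeup is therefore: (a) a one-paragraph lemma on splitting type, proved via nefness on ruled surfaces over lines; (b) invoke Proposition~\ref{prop-c1odd}; (c) note both outputs occur in Theorem~\ref{mainthm-resol}.
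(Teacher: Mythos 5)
Your proposal is correct and follows essentially the same route as the paper: establish that $\mcE|_{l} \simeq \mcO \oplus \mcO(-1)$ on every line and then invoke Proposition~\ref{prop-c1odd}. The only cosmetic difference is that the paper derives the splitting type from ampleness of $\mcE(2)$ restricted to $l$, whereas you compute $-K_{\P_{X}(\mcE)}$ against the negative section of $\P_{l}(\mcE|_{l})$ — the same intersection-theoretic computation in substance, and your step (c) is already covered by the verification of the ``if'' direction in the proof of Theorem~\ref{mainthm-resol}.
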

\begin{proof}
By Remark~\ref{rem-amp}, $\mcE(2)$ is ample. 
Hence $\mcE(2)|_{l} \simeq \mcO_{\P^{1}}(1) \oplus \mcO_{\P^{1}}(2)$ for every line $l \subset X$. 
Then Proposition~\ref{prop-c1odd} implies the result.
\end{proof}

%\begin{rem}\label{rem-Serre}
%Let $\mcD$ be a triangulated category with a Serre functor $S_{\mcD}$. 
%\begin{enumerate}
%\item Let $\mcA$ be an admissible subcategory of $\mcD$ (i.e. a full subcategory whose inclusion functor $i : \mcA \to \mcD$ has left and right adjoint functors $i^*, i^! : \mcA \to \mcD'$). 
%Then $i^{!} \circ S_{\mcD} \circ i \colon \mcA \to \mcA$ is a Serre functor of $\mcA$. 
%\item If $\mcD=\braket{E_{1},E_{2},\ldots,E_{r}}$ is a full exceptional collection, 
%then the following are also full exceptional collections: 
% \[ \mcD = \braket{S_{\mcD}(E_r), E_1, \cdots, E_{r-1}} = \braket{E_2, \cdots, E_r, S_{\mcD}^{-1}(E_1)}. \]
%\item For a smooth projective variety $Z$, $- \otimes \omega_{Z}[\dim Z] \colon \Db(Z) \to \Db(Z)$ is a Serre functor of $\Db(Z)$. 
%\end{enumerate}
%\end{rem}

\section{Rank $2$ weak Fano bundles with $c_{1}=0$}\label{sec-resol-dP5}
In this section, we deal with a rank $2$ weak Fano bundle $\mcE$ with $c_{1}(\mcE)=0$ to complete the proof of Theorem~\ref{mainthm-resol}. 
The main purpose of this section is to show the following theorem, which is an essential part of Theorem~\ref{mainthm-resol}. 
\begin{thm}\label{thm-resol}
Let $\mcE$ be an indecomposable rank $2$ weak Fano bundle on $X$ with $c_{1}(\mcE)=0$. 
If $c_{2}(\mcE)=1,2,3,4$, then $\mcE$ satisfies (v), (vi), (vii), (viii) in Theorem~\ref{mainthm-resol} respectively. 
\end{thm}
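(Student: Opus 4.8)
The plan is to treat the four values of $c_2(\mcE)$ largely separately, since each requires a different geometric input, but to organize everything around the derived-category reduction explained in the introduction. First I would dispose of $c_2(\mcE)=1$ directly: a general section of $\mcE(1)$ vanishes along a line $l \subset X$ (using that $\mcE(1)$ is globally generated by \cite[Theorem~1.7]{FHI20} and a Serre correspondence / Bertini argument on the small-dimensional locus), giving $0 \to \mcO \to \mcE \to \mcI_l \to 0$; then resolving $\mcI_l$ by the collection $\braket{\mcO(-1),\mcQ(-1),\mcR,\mcO}$ on $X$ (lines on a degree-$5$ del Pezzo threefold are known very explicitly, and $\mcI_l$ has a standard short resolution in terms of $\mcR$ and $\mcQ$) yields the sequence $0 \to \mcQ(-1) \to \mcO \oplus \mcR^{\oplus 2} \to \mcE \to 0$ of (v). For $c_2(\mcE)=2$, as indicated I would invoke \cite[Corollary~4.7]{FHI20} to see $\mcE$ is an instanton bundle, feed $\mcE$ into the Faenzi--Kuznetsov monad \cite{Faenzi14,Kuznetsov12}, and read off (vi) by a cohomology bookkeeping computation; here the only subtlety is identifying the middle term of the monad with $\mcR^{\oplus 4}$, which follows from the known shape of the monad for $c_2=2$.

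For the essential cases $c_2(\mcE)=3,4$, the plan is to carry out the mutation argument sketched in the introduction. I would first establish, for any indecomposable weak Fano $\mcE$ with $c_1(\mcE)=0$ and $c_2(\mcE)\geq 3$, that $\mcE(1)$ is globally generated and that $\RHom(\mcO,\mcE(1))$, $\RHom(\mcO(-1),\mcE(1))$ are concentrated in degree $0$ with the Riemann--Roch dimensions $h^0(\mcE(1)) = 14 - 2c_2(\mcE)$ and $h^0(\mcE) = 0$ (using $c_1=0$, instanton vanishing $h^1(\mcE(-1))=0$, stability, and Riemann--Roch on the degree-$5$ threefold). From these vanishings, $\mcV := \RR_{\mcO(-1)}\LL_{\mcO}(\mcE(1))[-1]$ is a genuine vector bundle fitting into $0 \to \mcO(-1)^{\oplus c_2(\mcE)-2} \to \mcV \to \mcO^{\oplus(14-2c_2(\mcE))} \to \mcE(1) \to 0$, as stated, and lies in $\braket{\mcQ(-1),\mcR}$. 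Applying Lemma~\ref{lem-2term} to $(E_1,E_2)=(\mcR,\mcQ(-1))$ inside this subcategory gives the distinguished triangle $\RHom(\mcR,\mcV)\otimes\mcR \to \mcV \to \RHom(\mcV,\mcQ(-1))^{\vee}\otimes\mcQ(-1) \xrightarrow{+1}$, and then the key identifications $\RHom(\mcR,\mcV)\simeq\RHom(\mcQ(-1),\mcE)$ and $\RHom(\mcV,\mcQ(-1))^{\vee}\simeq\RHom(\mcR,\mcE)[1]$ (this is Lemma~\ref{lem-mutation-dP5}, proved by chasing the defining triangles of $\LL_{\mcO}$, $\RR_{\mcO(-1)}$ together with the Serre functor of the admissible subcategory $\braket{\mcQ(-1),\mcR}$ of $\Db(X)$, using $S_{\Db(X)}(-) = (-)\otimes\mcO(-2)[3]$ and the mutation rules in Remark~\ref{rem-cat}~(2),(3)).

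It then remains to compute $\Ext^i(\mcQ(-1),\mcE)$ and $\Ext^i(\mcR,\mcE)$ for $c_2(\mcE)\in\{3,4\}$. By Serre duality and $\mcE\simeq\mcE^{\vee}$, these reduce to the cohomology of $\mcE\otimes\mcQ^{\vee}(-1)$, $\mcE\otimes\mcR^{\vee}$ and their twists; I would pin these down from Riemann--Roch (on $\Db(X)$, using the known Euler pairings of the exceptional collection) together with the vanishing theorems and explicit geometry of instanton bundles of charge $3$ and $4$ — this is where Sanna's results \cite{Sanna14,Sanna17} on $c_2\leq 3$ enter, and for $c_2=4$ some extra geometric observations about the jumping lines / the restriction of $\mcE$ to special curves. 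Once the cohomology is known, $\RHom(\mcR,\mcV)$ and $\RHom(\mcV,\mcQ(-1))^{\vee}$ are explicit finite complexes of vector spaces, the triangle above presents $\mcV$ as a two-term complex of direct sums of $\mcR$ and $\mcQ(-1)$, and substituting back into the four-term sequence produces exactly (vii) for $c_2=3$ and (viii) for $c_2=4$; the existence of examples on an arbitrary del Pezzo threefold of degree $5$ follows by reversing the construction (taking appropriate maps and checking the cokernel is a weak Fano bundle, the nef-and-big check being the $\P_X(\mcE)$ computation). The main obstacle I anticipate is precisely the last step for $c_2(\mcE)=4$: controlling $\Ext^{\bullet}(\mcQ(-1),\mcE)$ and $\Ext^{\bullet}(\mcR,\mcE)$ for charge-$4$ instantons, which are not covered by \cite{Sanna14,Sanna17} and for which one must rule out unexpected higher cohomology by hand — everything else is either a formal consequence of the exceptional collection and Serre-functor formalism or a bounded Riemann--Roch calculation.
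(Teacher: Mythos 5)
Your overall strategy coincides with the paper's own proof: the $c_{2}=1,2$ cases are handled via the sequence $0 \to \mcO \to \mcE \to \mcI_{l} \to 0$ and the Kuznetsov monad respectively, and the $c_{2}=3,4$ cases by the mutation $\RR_{\mcO(-1)}\LL_{\mcO}(\mcE(1))$, the four-term sequence for $\mcV$, and the two-term decomposition of $\mcV$ inside $\braket{\mcQ(-1),\mcR}$. (Two small slips: for $c_{2}=1$ the line arises as the zero locus of a section of $\mcE$ itself, not of $\mcE(1)$, whose sections vanish on curves of degree $6$; and Lemma~\ref{lem-2term} must be applied with $(E_{1},E_{2})=(\mcQ(-1),\mcR)$, although the distinguished triangle you then write down is the correct one.)

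The genuine gap is exactly where you flag it, and it is not a routine verification. First, to know that $\mcV$ is an extension of copies of $\mcQ(-1)$ by copies of $\mcR$ in a single cohomological degree, rather than a genuine two-term complex, you need $\Ext^{\geq 2}(\mcQ(-1),\mcE)=\Ext^{\geq 2}(\mcR,\mcE)=0$; the paper proves this (Lemma~\ref{lem-extgeq2}) not by a general vanishing theorem but by restricting to a general conic $C$, realized as the zero scheme of a section of $\mcR^{\vee}$, and invoking Sanna's result that $\mcE|_{C}\simeq\mcO_{\P^{1}}^{\oplus 2}$ for general $C$. Second, and more seriously, for $c_{2}=4$ the Hirzebruch--Riemann--Roch theorem only gives $\chi(\mcQ(-1),\mcE)=0$ and $\chi(\mcR,\mcE)=-2$; to conclude $\RHom(\mcQ(-1),\mcE)=0$ and $\RHom(\mcR,\mcE)=\Bbbk^{2}[-1]$ one must additionally prove $\Hom(\mcR,\mcE)=\Hom(\mcQ(-1),\mcE)=0$. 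The paper's Lemma~\ref{lem-vani-c24} does this by a four-step geometric argument: a nonzero map $\mcR\to\mcE$ is injective by stability and, via global generation of $\mcE(1)$, produces $0\to\mcO(-1)\to\mcR(1)\oplus\mcO\to\mcE(1)\to 0$, which contradicts $h^{1}(\mcE)=2$; a nonzero map $\mcE\to\mcQ^{\vee}(1)$ is analyzed through its saturation, using that the relevant degeneracy locus is a line cut out by a section of $\mcQ$, which reduces to the previous case; and the remaining saturated case $0\to\mcE\to\mcQ^{\vee}(1)\to\mcI_{Z}(2)\to 0$ is excluded by computing $h^{0}(\mcO_{Z})=3$ against $\omega_{Z}(1)\simeq\mcO_{Z}$. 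None of this is a bounded Riemann--Roch calculation; without supplying it, the $c_{2}=4$ case, and hence resolution (viii), is not established.
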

%\begin{rem}\label{rem-resol}
%For an indecomposable weak Fano bundle $\mcE$ with $c_{1}(\mcE)=0$,  
%we note that $1 \leq c_{2}(\mcE) \leq 4$ follows from \cite[Lemma~3.2]{Ishikawa16} and the inequality $(-K_{\P(\mcE)})^{4} > 0$. 
%\end{rem}
To show the above theorem, we mainly study a rank $2$ weak Fano bundle $\mcE$ with $c_{1}(\mcE)=0$ in this section. 
The following proposition is a fundamental property of such bundles. 
\begin{prop}\label{prop-FHI}
Let $\mcE$ be a rank $2$ vector bundle on a del Pezzo $3$-fold $X$ of degree $5$ with $c_{1}(\mcE)=0$. 
\begin{enumerate}
\item $\mcE$ is weak Fano bundle if and only if $\mcE(1)$ is globally generated and $c_{2}(\mcE) < 5$. 
\item If $\mcE$ is weak Fano with $c_{2}(\mcE) \geq 2$, then $\mcE$ is an instanton bundle. In particular, $\RG(\mcE(-1))=0$. 
\end{enumerate}
\end{prop}
\begin{proof}
Set $\mcF:=\mcE(1)$. 
Then $-K_{\P(\mcF)}=2\xi_{\mcF}$, where $\xi_{\mcF}$ is a tautological divisor on $\P(\mcF)$.  
Hence $\mcE$ is weak Fano if and only if $\mcF$ is nef and $0<\xi_{\mcF}^{4}=c_{1}(\mcF)^{3}-2c_{1}(\mcF)c_{2}(\mcF)$.
Note that the inequality $c_{2}(\mcE)<5$ is equivalent to the inequality $0<c_{1}(\mcF)^{3}-2c_{1}(\mcF)c_{2}(\mcF)=40-4(c_{2}(\mcE)+5)=20-4c_{2}(\mcE)$. 

Thus, if $\mcF = \mcE(1)$ is globally generated and $c_2(\mcE) < 5$, $\mcE$ is weak Fano.
Conversely, if $\mcE$ is weak Fano, the above shows $c_2(\mcE) < 5$, and the global generation of $\mcF = \mcE(1)$ follows from \cite[Theorem 1.7]{FHI20}, since $c_1(\mcF) = c_1(-K_X)$.

(2) follows from \cite[Corollary~4.7]{FHI20} and Proposition~\ref{prop-Ishikawa}~(6). 
\end{proof}
\begin{rem}\label{rem-leftmut}
Recall that the left mutation $\LL_{\mcO_{X}}(\mcE(1))$ is defined by the exact triangle
\[ H^{\bullet}(\mcE(1)) \otimes \mcO_X \to \mcE(1) \to \LL_{\mcO_{X}}(\mcE(1)). \]
If $\mcE(1)$ is weak Fano, then the vanishing $H^{\geq 1}(\mcE(1)) = 0$ in Proposition \ref{prop-Ishikawa} and the global generation implies $\mcH^{0}(\LL_{\mcO_{X}}(\mcE(1)))=0$.
Conversely, the vanishing $\mcH^{0}(\LL_{\mcO_{X}}(\mcE(1)))=0$ implies the global generation of $\mcE(1)$.
Thus the condition in Proposition~\ref{prop-FHI}~(1) is equivalent to saying that $\mcH^{0}(\LL_{\mcO_{X}}(\mcE(1)))=0$ and $c_{2}(\mcE) < 5$. 
\end{rem}

Here we recall a useful exact sequence and an equality given by \cite{Orlov91} or \cite[Equalities (9) and (10)]{Kuznetsov12} in studying vector bundles on del Pezzo $3$-folds of degree $5$:
%we recall the following sequence and equality given by \cite{Orlov91} or \cite[Equalities (9) and (10)]{Kuznetsov12}: 
%{\color{red} 以下は書き方検討：同型を下にするとか}
\begin{align}%Inner ev-coev exact sequence.
&0 \to \mcQ(-1) \mathop{\to}^{\coev} \Hom(\mcQ(-1),\mcR)^{\vee} \otimes \mcR \simeq \Hom(\mcR,\mcQ^{\vee}) \otimes \mcR \mathop{\to}^{\ev} \mcQ^{\vee} \to 0 \label{ex-innerECE}, \text{ and } \\
&\hom(\mcQ(-1),\mcR)=\hom(\mcR,\mcQ^{\vee})=3 \label{eq-innerECE-dim}.
\end{align}
We quickly review the above exact sequence. 
First, we note that $\hom(\mcR,\mcQ^{\vee})=3$ by \cite[Lemma~4.1]{Kuznetsov12}. 
Moreover, if we set $A:=\Hom(\mcR,\mcQ^{\vee})$, then the Hilbert scheme of lines on $X$ is isomorphic to $\P(A^{\vee})$. 
As explained in \cite[Section~4.1]{Kuznetsov12}, 
there is a non-degenerate symmetric form on $A$ and hence an isomorphism $A^{\vee} \simeq A$. 
%$\P(A^{\vee})$ is canonically embedded into $\P^{4}$ as a projected Veronese surface. 
%Thus $A^{\vee}$ has a non-degenerate symmetric form and gives an isomorphism $A^{\vee} \simeq A$. 
Under the identification $\Hom(\mcQ(-1),\mcR) \simeq \Hom(\mcR^{\vee},\mcQ^{\vee}(1)) \simeq \Hom(\mcR,\mcQ^{\vee})=A$, 
the isomorphism $A^{\vee} \simeq A$ induces an isomorphism $\Hom(\mcQ(-1),\mcR)^{\vee} \simeq \Hom(\mcR,\mcQ^{\vee})$ in (\ref{ex-innerECE}). 

\subsection{Proof of Theorem~\ref{thm-resol} with $c_{2}=1$}\label{subsec-c2=1}

Suppose $c_{2}(\mcE)=1$. 
Then there exists a line $l$ on $X$ such that $\mcE$ fits in the exact sequence $0 \to \mcO_{X} \to \mcE \to \mcI_{l/X} \to 0$ by Proposition~\ref{prop-Ishikawa}~(5). 
%{\color{red} 
%, which we will independently prove in Section~\ref{sec-conclusions} without using any results from other sections.
%}
%\cite[Lemma~3.2]{Ishikawa16}. 
Moreover, by \cite[Lemma~4.2]{Kuznetsov12}, there exists the following exact sequence for each line $l$:  
\begin{align}\label{ex-line-classical}
0 \to \mcR \mathop{\to}^{a_{l}} \mcQ^{\vee} \to \mcI_{l/X} \to 0. 
\end{align}
Note that $a_{l} \colon \mcR \to \mcQ^{\vee}$ factors as follows: $\displaystyle \mcR \mathop{\to}^{\id \otimes a_{l}} \mcR \otimes \Hom(\mcR,\mcQ^{\vee}) \mathop{\to}^{\ev} \mcQ^{\vee}$. 

Next we claim that using (\ref{ex-innerECE}) gives another exact sequence 
\begin{align}\label{ex-line-our}
0 \to \mcQ(-1) \to \mcR^{\oplus 2} \to \mcI_{l/X} \to 0.
\end{align}
Indeed, since $\Ext^1(\mcR, \mcQ(-1)) = 0$, the injection $\mcR \hookrightarrow \mcQ^{\vee}$ lifts to an injection $\mcR \hookrightarrow \mcR^{\oplus 3}$.
Since $\mcR$ is exceptional, the cokernel of this morphism is $\mcR^{\oplus 2}$.
This yields the following commutative diagram. 
\[ \begin{tikzcd}
&& \mcQ(-1) \arrow[d, hook] \arrow[r, equal] & \mcQ(-1) \arrow[d, hook] & \\ 
0 \arrow[r] & \mcR \arrow[d, equal] \arrow[r] & \mcR^{\oplus 3} \arrow[r] \arrow[d, two heads] & \mcR^{\oplus 2} \arrow[d, two heads] \arrow[r] & 0 \\
0 \arrow[r] & \mcR \arrow[r] & \mcQ^{\vee} \arrow[r] & \mcI_{l/X} \arrow[r] & 0 
\end{tikzcd} \]
The right vertical sequence is the desired one.

Pulling back the exact sequence (\ref{ex-line-our}) by the surjection $\mcE \to \mcI_{l/X}$, we have an extension $0 \to \mcQ(-1) \to \mcE' \to \mcE \to 0$, where $\mcE'$ is the extension of $\mcR^{\oplus 2}$ by $\mcO_{X}$. 
%\[\xymatrix{
%& & 0 \ar[d] & 0\ar[d] & \\ 
%& & \mcO \ar@{=}[r] \ar[d] & \mcO \ar[d] & \\ 
%0 \ar[r] & \mcQ(-1) \ar@{=}[d] \ar[r] & \mcE' \ar[d] \ar[r] & \mcE \ar[d] \ar[r] & 0 \\
%0 \ar[r] & \mcQ(-1) \ar[r] & \mcR^{\oplus 2} \ar[d] \ar[r] & \mcI_{l/X} \ar[d] \ar[r] & 0 \\
%&&0&0&
%}\]
%\[ \begin{tikzcd}
% & & 0 \arrow[d] & 0\arrow[d] & \\ 
% & & \mcO \arrow[r, equal] \arrow[d] & \mcO \arrow[d] & \\ 
% 0 \arrow[r] & \mcQ(-1) \arrow[d, equal] \arrow[r] & \mcE' \arrow[d] \arrow[r] & \mcE \arrow[d] \arrow[r] & 0 \\
% 0 \arrow[r] & \mcQ(-1) \arrow[r] & \mcR^{\oplus 2} \arrow[d] \arrow[r] & \mcI_{l/X} \arrow[d] \arrow[r] & 0 \\
% &&0&0&
%\end{tikzcd} \]
Since $\Ext^1(\mcR, \mcO_{X}) = 0$, the bundle $\mcE'$ is isomorphic to $\mcO_{X} \oplus \mcR^{\oplus 2}$, which gives the desired resolution 
\begin{align}\label{ex-c21-result}
0 \to \mcQ(-1) \to \mcO_{X} \oplus \mcR^{\oplus 2} \to \mcE \to 0. 
\end{align}
\begin{rem}\label{rem-c21-another}
Pulling back the exact sequence (\ref{ex-line-classical}) by the surjection $\mcE \to \mcI_{l/X}$, we also have an extension $0 \to \mcR \to \mcE'' \to \mcE \to 0$, where $\mcE''$ is the extension of $\mcQ^{\vee}$ by $\mcO_{X}$. 
Since $\Ext^{1}(\mcQ^{\vee},\mcO_{X})=H^{1}(\mcQ)=0$, 
$\mcE'' \simeq \mcQ^{\vee} \oplus \mcO_{X}$, which gives another exact sequence 
\begin{align}\label{ex-c21-another}
0 \to \mcR \to \mcQ^{\vee} \oplus \mcO_{X} \to \mcE \to 0.
\end{align}
\end{rem}

\subsection{Proof of Theorem~\ref{thm-resol} with $c_{2}=2$}
\label{subsec-c2=2}
Next, let us assume that $c_{2}(\mcE) = 2$. 
By Proposition~\ref{prop-FHI}, $\mcE$ is an instanton bundle. 
Hence \cite[Theorem~4.7]{Kuznetsov12} gives 
\begin{align}\label{ex-c22-another}
0 \to \mcR^{\oplus 2} \mathop{\to}^{\gamma'} (\mcQ^{\vee})^{\oplus 2} \to \mcE \to 0.
\end{align}
On the other hand, by (\ref{ex-innerECE}) there is an exact sequence 
$0 \to \mcQ(-1)^{\oplus 2} \to \mcR^{\oplus 6} \to (\mcQ^{\vee})^{\oplus 2} \to 0$. 
Applying $\Hom(\mcR^{\oplus 2},-)$ to this exact sequence, 
we obtain an exact sequence 
$0 \to \Hom(\mcR^{\oplus 2},\mcQ(-1)^{\oplus 2}) \to \Hom(\mcR^{\oplus 2},\mcR^{\oplus 6}) \to \Hom(\mcR^{\oplus 2},(\mcQ^{\vee})^{\oplus 2}) \to \Ext^{1}(\mcR^{\oplus 2},\mcQ(-1)^{\oplus 2})$. 
Since $\Ext^{\bullet}(\mcR,\mcQ(-1)) = 0$, 
the map 
$\Hom(\mcR^{\oplus 2},\mcR^{\oplus 6}) \to \Hom(\mcR^{\oplus 2},(\mcQ^{\vee})^{\oplus 2})$ 
is an isomorphism. 
Hence $\gamma'$ uniquely lifts to a morphism $\wt{\gamma} \colon \mcR^{\oplus 2} \hra \mcR^{\oplus 6}$. 
Since $\gamma'$ is injective, so is $\wt{\gamma}$. 
Note that $\mcR$ is simple and hence the natural map $\Hom(\Bbbk^{\oplus 2},\Bbbk^{\oplus 6}) \simeq \Hom(\mcR^{\oplus 2},\mcR^{\oplus 6})$ is an isomorphism. 
Thus the cokernel of $\wt{\gamma}$ is isomorphic to $\mcR^{\oplus 4}$: 
%the injection $\gamma'$ uniquely lifts to an injection $\wt{\gamma} \colon \mcR^{\oplus 2} \hra \mcR^{\oplus 6}$ along this surjection $\mcR^{\oplus 6} \to (\mcQ^{\vee})^{\oplus 2}$: 
\[\xymatrix{
0 \ar[r] & \mcR^{\oplus 2} \ar[r]^{\gamma'} & (\mcQ^{\vee})^{\oplus 2} \ar[r] & \mcE \ar[r] & 0 \\ 
0 \ar[r]& \mcR^{\oplus 2} \ar[r]^{\wt{\gamma}} \ar@{=}[u]& \mcR^{\oplus 6} \ar[u] \ar[r]& \mcR^{\oplus 4} \ar@{..>}[u] \ar[r]& 0
}.\]
Thus we obtain an exact sequence 
\begin{align}\label{ex-c22-our}
0 \to \mcQ(-1)^{\oplus 2} \to \mcR^{\oplus 4} \to \mcE \to 0.
\end{align}

\subsection{Preliminaries for $c_{2} \geq 3$}

When $c_{2}(\mcE) \geq 3$, we will use another technique whose key ingredient is Theorem~\ref{Orlov}. 
%we need another technique for finding our resolutions
%The key ingredient is the following Orlov's theorem. 
%\begin{thm}\label{Orlov}
%The derived category $\Db(X)$ of a del Pezzo $3$-fold of degree $5$ admits a full strong exceptional collection
%\begin{align}\label{eq-Orlov}
%\Db(X) = \langle \mcO(-1), \mcQ(-1), \mcR, \mcO \rangle.
%\end{align}
%\end{thm}
Let us prepare the following lemma for instanton bundles as a tool for finding the desired resolutions.

\begin{lem}\label{lem-tool-instanton}
For an instanton bundle $\mcE$ on $X$, 
there is a distinguished triangle
\begin{align}\label{dt-instanton}
&H^{1}(\mcE) \otimes \mcO_{X}(-1) \to \RR_{\mcO_{X}(-1)}\LL_{\mcO_{X}}(\mcE(1))[-1] \to \LL_{\mcO_{X}}(\mcE(1))[-1] \mathop{\to}^{+1} 
\end{align}
Moreover, if $\mcE$ is weak Fano, then 
there is a vector bundle $\mcV$ such that 
$\mcV \simeq \RR_{\mcO_{X}(-1)}\LL_{\mcO_{X}}(\mcE(1))[-1]$ fitting into the following exact sequence; 
\begin{align}\label{ex-instanton-wF}
0 \to H^{1}(\mcE) \otimes \mcO_{X}(-1) \to \mcV \to H^{0}(\mcE(1)) \otimes \mcO_{X} \mathop{\to}^{\ev} \mcE(1) \to 0. 
\end{align}
%In this case, letting $\mcV:=\mcH^{-1}(\RR_{\mcO(-1)}\LL_{\mcO}(\mcE(1)))$, 
\end{lem}

\begin{proof}
Consider the right mutation of $\LL_{\mcO_{X}}(\mcE(1))$ by $\mcO_{X}(-1)$:
\[\RR_{\mcO_{X}(-1)}\LL_{\mcO_{X}}(\mcE(1)) \to \LL_{\mcO_{X}}(\mcE(1)) \to \RHom(\LL_{\mcO_{X}}(\mcE(1)),\mcO_{X}(-1))^{\vee} \otimes \mcO_{X}(-1) \mathop{\to}^{+1}\]
Using the distinguished triangle 
$\displaystyle \RHom(\mcO_{X},\mcE(1)) \otimes \mcO_{X} \to \mcE(1) \to \LL_{\mcO_{X}}(\mcE(1)) \mathop{\to}^{+1} $ and the Serre duality, 
we have 
\begin{align*}
\RHom(\LL_{\mcO_{X}}(\mcE(1)),\mcO_{X}(-1))^{\vee} \simeq &\RHom(\mcE(1),\mcO_{X}(-1))^{\vee} \\
\simeq &\RHom(\mcE,\mcO_{X}(K_{X}))^{\vee} \\
\simeq &\RHom(\mcO_{X},\mcE)[-3] \simeq H^{1}(\mcE)[2],
\end{align*}
where the last isomorphism follows from \cite[Lemma~3.1]{Kuznetsov12}. 
This gives the triangle (\ref{dt-instanton}). 

Let us suppose that $\mcE$ is weak Fano. 
By Proposition~\ref{prop-FHI}, $\mcF:=\mcE(1)$ is globally generated. 
Thus $\LL_{\mcO_{X}}(\mcE(1))[-1] \simeq \Ker(H^{0}(\mcE(1)) \otimes \mcO_{X} \epm \mcE(1))$ is also a locally free sheaf. 
Then by (\ref{dt-instanton}), 
$\mcV:=\RR_{\mcO_{X}(-1)}\LL_{\mcO_{X}}(\mcE(1))[-1]$ is a locally free sheaf. 
Thus we obtain the exact sequence (\ref{ex-instanton-wF}). 
%Since $H^{1}(\mcE) \otimes \mcO_{X}(-1)$ and $\Ker(H^{0}(\mcE(1)) \otimes \mcO_{X} \epm \mcE(1))$ are locally free, so is $\mcV$. 
This completes the proof. 
\end{proof}
%
%\newpage
%We note that $\mcH^{0}(\RR_{\mcO(-1)}\LL_{\mcO}(\mcE(1))) \simeq \mcH^{0}(\LL_{\mcO}(\mcE(1)))=0$ by (\ref{dt-instanton-Rm}). 
%Hence $\RR_{\mcO(-1)}\LL_{\mcO}(\mcE(1)) \simeq \mcH^{-1}(\RR_{\mcO(-1)}\LL_{\mcO}(\mcE(1)))[1]$ by (\ref{dt-instanton-oct}). 
%Therefore, putting $\mcV:=\mcH^{-1}(\RR_{\mcO(-1)}\LL_{\mcO}(\mcE(1)))$, 
%we have the exact sequence (\ref{ex-instanton-wF}) as the long exact sequence derived from (\ref{dt-instanton}). 
%
In particular, for a given rank $2$ weak Fano bundle $\mcE$ with $c_{1}(\mcE)=0$ and $c_{2}(\mcE) \geq 2$, 
a resolution of $\mcE(1)$ can be obtained by computing $\mcV=\mcH^{-1}(\RR_{\mcO_{X}(-1)}\LL_{\mcO_{X}}(\mcE(1)))$ in the exact sequence (\ref{ex-instanton-wF}). 
For computing $\mcV$, we need the following lemma. 
\begin{lem}\label{lem-mutation-dP5}
Let $\mcE \in \Db(X)$ be an object. 
Then we have the following isomorphisms.
\begin{enumerate}
\item $\RHom(\RR_{\mcO_{X}(-1)}\LL_{\mcO_{X}}(\mcE(1)),\mcQ(-1)) \simeq \RHom(\mcR,\mcE)^{\vee}[-2]$. 
\item $\RHom(\mcR,\RR_{\mcO_{X}(-1)}\LL_{\mcO_{X}}(\mcE(1))) \simeq \RHom(\mcQ(-1),\mcE)[1]$. 
\end{enumerate}
In particular, applying Lemma~\ref{lem-2term} to the object $\RR_{\mcO_{X}(-1)}\LL_{\mcO_{X}}(\mcE(1)) \in {}^{\bot}\braket{\mcO_{X}(-1)} \cap \braket{\mcO_{X}}^{\bot}=\braket{\mcQ(-1),\mcR}$, 
we have the following distinguished triangle
\begin{align}\label{dt-Serre}
\RHom(\mcQ(-1),\mcE)[1] \otimes \mcR \to \RR_{\mcO_{X}(-1)}\LL_{\mcO_{X}}(\mcE(1)) \to \RHom(\mcR,\mcE)[2] \otimes \mcQ(-1) \mathop{\to}^{+1}.
\end{align}
\end{lem}

\begin{proof}
%\textit{Step 1.} 
First, we show the following claim. 

\begin{claim}\label{claim-Serre-dP5}
Set $\mcA=\braket{\mcQ(-1),\mcR}$ and 
$\mcB:=\braket{\mcO_{X}(-1),\mcQ(-1),\mcR}$. 
Note that $\mcA \subset \mcB$ and $\mcB \subset \Db(X)$ are admissible subcategories. 
Let $S_{\mcA}$ and $S_{\mcB}$ be the Serre functors of $\mcA$ and $\mcB$ induced by the Serre functor of $\Db(X)$ respectively. 
Then we have the following assertions.
\begin{enumerate}
\item $S_{\mcA}^{-1}(\mcQ(-1)) \simeq \mcQ^{\vee}[-1]$. 
\item $S_{\mcB}(\mcQ^{\vee}) \simeq \mcR(-1)[2]$. 
\item $S_{\mcB}(\mcR) \simeq \mcQ(-2)[2]$. 
\end{enumerate}
\end{claim}

\begin{proof}[Proof of Claim~\ref{claim-Serre-dP5}]

Since there is a semi-orthogonal decomposition $\Db(X) = \langle \mcA, \mcO_{X}, \mcO_{X}(1) \rangle$, the inverse Serre functor $S_{\mcA}^{-1}$ can be computed as 
$S_{\mcA}^{-1} \simeq \LL_{\mcO_{X}}\LL_{\mcO_{X}(1)}(- \otimes \mcO_{X}(2)[-3])$.
Thus $S_{\mcA}^{-1}(\mcQ(-1)) \simeq \LL_{\mcO_{X}}\LL_{\mcO_{X}(1)}(\mcQ(1))[-3]$, and exact sequences 
$0 \to \mcR(1) \to \mcO_{X}(1)^{\oplus 5} \to \mcQ(1) \to 0$
and
$0 \to \mcQ^{\vee} \to \mcO_{X}^{\oplus 5} \to \mcR(1) \to 0 $
give 
\[ \LL_{\mcO_{X}}\LL_{\mcO_{X}(1)}(\mcQ(1))[-3] \simeq  \LL_{\mcO_{X}}(\mcR(1)[1])[-3] \simeq \mcQ^{\vee}[-1]. \] 
This shows (1).

Similarly, to prove (2) and (3), let us consider a semi-orthogonal decomposition $\Db(X) = \langle \mcO_{X}(-2), \mcB \rangle$, 
which gives the functor isomorphism $S_{\mcB} \simeq \RR_{\mcO_{X}(-2)}(- \otimes \mcO_{X}(-2)[3])$.
Then the exact sequences 
$0 \to \mcQ^{\vee}(-2) \to \mcO_{X}(-2)^{\oplus 5} \to \mcR(-1) \to 0$ 
and 
$0 \to \mcR(-2) \to \mcO_{X}(-2)^{\oplus 5} \to \mcQ(-2) \to 0$ 
give computations 
$S_{\mcB}(\mcQ^{\vee}) \simeq \RR_{\mcO_{X}(-2)}(\mcQ^{\vee}(-2))[3] \simeq \mcR(-1)[2]$ 
and 
$S_{\mcB}(\mcR) \simeq \RR_{\mcO_{X}(-2)}(\mcR(-2))[3] \simeq  \mcQ(-2)[2]$ 
respectively. 
We complete the proof of Claim~\ref{claim-Serre-dP5}. 
\end{proof}
%\textit{Step 2. } 
Using Claim~\ref{claim-Serre-dP5}, we prove Lemma~\ref{lem-mutation-dP5}. 

(1) Since $\RR_{\mcO_{X}(-1)}\LL_{\mcO_{X}}(\mcE(1)),\mcQ(-1) \in \mcA$, 
we have 
\begin{align*}
&\RHom_{\Db(X)}(\RR_{\mcO_{X}(-1)}\LL_{\mcO_{X}}(\mcE(1)),\mcQ(-1))  \\
=&\RHom_{\mcA}(\RR_{\mcO_{X}(-1)}\LL_{\mcO_{X}}(\mcE(1)),\mcQ(-1)) \\
=&\RHom_{\mcA}(S_{\mcA}^{-1}(\mcQ(-1)),\RR_{\mcO_{X}(-1)}\LL_{\mcO_{X}}(\mcE(1)))^{\vee} \quad \text{(Serre duality in $\mcA$)} \\
=&\RHom_{\mcB}(S_{\mcA}^{-1}(\mcQ(-1)),\LL_{\mcO_{X}}(\mcE(1)))^{\vee} \quad 
(\text{Remark~\ref{rem-cat}~(1)}) \\
%\text{($\RR_{\mcO_{X}(-1)}$ is the right adjoint of $\mcA \hra \mcB$)}\\
=&\RHom_{\mcB}(\LL_{\mcO_{X}}(\mcE(1)),S_{\mcB}S_{\mcA}^{-1}(\mcQ(-1))) \quad \text{(Serre duality in $\mcB$)} \\
=&\RHom_{\Db(X)}(\mcE(1),S_{\mcB}S_{\mcA}^{-1}(\mcQ(-1)))\quad 
(\text{Remark~\ref{rem-cat}~(1)}) \\
%\text{($\LL_{\mcO_{X}}$ is the left adjoint of $\mcB \hra \Db(X)$)} \\
=&\RHom_{\Db(X)}(\mcE(1),\mcR(-1)[1]) \quad (\text{Claim~\ref{claim-Serre-dP5}}) \\
=&\RHom_{\Db(X)}(\mcR,\mcE)^{\vee}[-2] \quad (\text{Serre duality on $X$}). 
\end{align*}

(2) Since $\mcR,\RR_{\mcO_{X}(-1)}\LL_{\mcO_{X}}(\mcE(1)) \in \mcA$, 
we have 
%We set $\mcA$ and $\mcB$ as in (1). 
\begin{align*}
&\RHom_{\Db(X)}(\mcR,\RR_{\mcO_{X}(-1)}\LL_{\mcO_{X}}(\mcE(1))) \\
=&\RHom_{\mcA}(\mcR,\RR_{\mcO_{X}(-1)}\LL_{\mcO_{X}}(\mcE(1))) \\
=&\RHom_{\mcB}(\mcR,\LL_{\mcO_{X}}(\mcE(1))) \quad 
(\text{Remark~\ref{rem-cat}~(1)}) \\
%\text{($\RR_{\mcO_{X}(-1)}$ is the right adjoint of $\mcA \hra \mcB$)}\\
=&\RHom_{\mcB}(\LL_{\mcO_{X}}(\mcE(1)),S_{\mcB}(\mcR))^{\vee} \quad \text{(Serre duality in $\mcB$)} \\
=&\RHom_{\Db(X)}(\mcE(1),S_{\mcB}(\mcR))^{\vee} \quad 
(\text{Remark~\ref{rem-cat}~(1)}) \\
%\text{($\LL_{\mcO_{X}}$ is the left adjoint of $\mcB \hra \Db(X)$)} \\
=&\RHom_{\Db(X)}(\mcE(1),\mcQ(-2)[2])^{\vee} \quad (\text{Claim~\ref{claim-Serre-dP5}}) \\
%=&\RHom_{\Db(X)}(\mcF[1],\mcQ(-2)[3])^{\vee} 
=&\RHom_{\Db(X)}(\mcQ(-1),\mcE)[1] \quad (\text{Serre duality on $X$}).
%=&\RHom_{\Db(X)}(\mcQ(-1),\mcE). 
\end{align*}
This completes the proof of Lemma~\ref{lem-mutation-dP5}. 
\end{proof}
In particular, the computation of $\mcV$ can be reduced to computing the cohomologies of $\RHom(\mcQ(-1),\mcE)$ and $\RHom(\mcR,\mcE)$. 
%{\color{red} 
We also need the following lemma to compute them. 
%}
\begin{lem}\label{lem-extgeq2}
It holds that $\Ext^{\geq 2}(\mcQ(-1),\mcE)=\Ext^{\geq 2}(\mcR,\mcE)=0$ for an instanton bundle $\mcE$. 
\end{lem}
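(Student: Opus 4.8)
The plan is to prove the vanishing $\Ext^{\geq 2}(\mcQ(-1),\mcE)=\Ext^{\geq 2}(\mcR,\mcE)=0$ by reducing both $\Ext$-groups to cohomology of twists of $\mcE$ via the exceptional sequences already introduced, and then invoking the instanton condition together with Kuznetsov's vanishing lemmas.

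I would first handle $\Ext^{\geq 2}(\mcR,\mcE)$. Since $\mcR$ is a subbundle of a trivial bundle, the canonical sequence (\ref{ex-Gr}), namely $0 \to \mcR \to \mcO^{\oplus 5} \to \mcQ \to 0$ (using $\Hom(\mcR,\mcO)^{\vee}\otimes\mcO\simeq\mcO^{\oplus 5}$), gives a long exact sequence relating $\Ext^{i}(\mcR,\mcE)$ to $H^{i}(\mcE)$ and $\Ext^{i}(\mcQ,\mcE)=H^{i}(\mcE\otimes\mcQ^{\vee})$. For a rank $2$ bundle with $c_1=0$ we have $\mcE\simeq\mcE^{\vee}$, so $\RHom(\mcR,\mcE)\simeq\RHom(\mcE^{\vee},\mcR^{\vee})^{\vee}\simeq\RHom(\mcE,\mcR^{\vee})^{\vee}$, and I would use whichever presentation is most convenient. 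The key inputs are: $H^{\geq 2}(\mcE)=0$ and $H^{\geq 2}(\mcE\otimes\mcQ^{\vee})=0$ for an instanton bundle. The first follows from Serre duality $H^{i}(\mcE)\simeq H^{3-i}(\mcE)^{\vee}$ (as $\mcE\simeq\mcE^{\vee}$, $\omega_X=\mcO(-2)$ and $\mcE(-2)\simeq\mcE^\vee(-2)$, so $H^i(\mcE)\simeq H^{3-i}(\mcE(-2))^\vee\simeq H^{3-i}(\mcE^\vee(-2))^\vee$; combined with \cite[Lemma~3.1]{Kuznetsov12} which controls $H^\bullet(\mcE)$) together with stability forcing $H^0(\mcE)=0$. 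The second requires the cohomology of $\mcE\otimes\mcQ^{\vee}\simeq\mcE\otimes\mcQ(-1)\otimes\det\mcQ^\vee$... more directly, I would use that $\mcQ^\vee$ is an extension built from $\mcO$ and $\mcR$ via the dual of (\ref{ex-Gr}), or simply cite the relevant vanishing from \cite{Kuznetsov12,Faenzi14} for the exceptional objects against an instanton bundle.

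For $\Ext^{\geq 2}(\mcQ(-1),\mcE)$ I would use the sequence (\ref{ex-innerECE}), $0 \to \mcQ(-1) \to \mcR^{\oplus 3} \to \mcQ^{\vee} \to 0$, which gives the long exact sequence $\cdots\to\Ext^{i}(\mcQ^\vee,\mcE)\to\Ext^{i}(\mcR,\mcE)^{\oplus 3}\to\Ext^{i}(\mcQ(-1),\mcE)\to\Ext^{i+1}(\mcQ^\vee,\mcE)\to\cdots$. Once I know $\Ext^{\geq 2}(\mcR,\mcE)=0$ from the previous step, it remains to show $\Ext^{\geq 2}(\mcQ^{\vee},\mcE)=0$, i.e. $H^{\geq 2}(\mcE\otimes\mcQ)=0$ and also that the connecting map $\Ext^{1}(\mcQ^\vee,\mcE)\to\Ext^{2}$ vanishes appropriately — more cleanly, the long exact sequence shows $\Ext^{\geq 3}(\mcQ(-1),\mcE)=0$ immediately and that $\Ext^{2}(\mcQ(-1),\mcE)$ injects into $\Ext^{3}(\mcQ^\vee,\mcE)=H^3(\mcE\otimes\mcQ)$, which vanishes by Serre duality since $H^0(\mcE\otimes\mcQ^\vee(-2))=H^0(\mcE\otimes\mcQ(-3))$... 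I'd instead argue $H^{3}(\mcE\otimes\mcQ)=H^{0}(\mcE^\vee\otimes\mcQ^\vee(-2))^\vee=H^{0}(\mcE\otimes\mcQ^\vee(-2))^\vee=0$ by stability/slope reasons (the bundle $\mcE\otimes\mcQ^\vee(-2)$ has no sections as all its relevant twists are negative enough). Symmetrically one gets $\Ext^{\geq 2}(\mcR,\mcE)=0$ needs $H^{\geq 2}(\mcE\otimes\mcQ)=0$ which again follows by the same Serre-duality computation on the twists.

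The main obstacle I anticipate is not the homological bookkeeping — the long exact sequences from (\ref{ex-innerECE}) and (\ref{ex-Gr}) mechanically reduce everything to cohomology of $\mcE$, $\mcE\otimes\mcQ$, $\mcE\otimes\mcQ^\vee$ — but rather cleanly establishing the vanishing of $H^{i}(\mcE\otimes\mcQ)$ and $H^{i}(\mcE\otimes\mcQ^{\vee})$ for $i\geq 2$ purely from the definition of an instanton bundle (rank $2$, stable, $c_1=0$, $h^1(\mcE(-1))=0$) without circular appeal to results proved later. I expect the cleanest route is: (a) $H^{3}$ of any such twist vanishes by Serre duality reducing to $H^{0}$ of a suitably negative bundle, using stability of $\mcE$ (a stable bundle of slope $0$ twisted by a bundle all of whose Jordan–Hölder slopes are negative has no global sections); (b) $H^{2}$ vanishes by combining Serre duality with the instanton vanishing $h^{1}(\mcE(-1))=0$ after resolving $\mcQ$ and $\mcQ^\vee$ in terms of line bundles and $\mcR$, $\mcR^\vee$. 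I would state these two cohomology vanishings as an intermediate claim, prove them first, and then feed them into the two long exact sequences to conclude.
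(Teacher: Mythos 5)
Your proposal is correct in outline but takes a genuinely different route from the paper. The paper first uses the mutation triangles it has already built (the triangle (\ref{dt-instanton}) gives $\mcH^{\geq 1}(\RR_{\mcO(-1)}\LL_{\mcO}(\mcE(1)))=0$ from $H^{\geq 2}(\mcE(1))=0$, and then the triangle (\ref{dt-Serre}) reduces the vanishing of $\Ext^{\geq 2}(\mcQ(-1),\mcE)$ to that of $\Ext^{\geq 2}(\mcR,\mcE)$); for the remaining group $H^{\geq 2}(\mcR^{\vee}\otimes\mcE)$ it argues geometrically, writing a general conic $C$ as the zero locus of a section of $\mcR^{\vee}$, using the Koszul-type sequence $0\to\mcE\to\mcR^{\vee}\otimes\mcE\to\mcI_{C}(1)\otimes\mcE\to 0$ together with $H^{\geq 2}(\mcE)=0$ and Sanna's splitting $\mcE|_{C}\simeq\mcO_{\P^{1}}^{\oplus 2}$ for general $C$ \cite{Sanna14}. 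You instead run the long exact sequences of $\Hom(-,\mcE)$ applied to the two tautological sequences (\ref{ex-Gr}) and (\ref{ex-innerECE}), which reduces everything to $H^{\geq 2}(\mcE)=0$ (same citation of \cite[Lemma~3.1]{Kuznetsov12} as the paper), $H^{3}(\mcQ^{\vee}\otimes\mcE)=0$ and $H^{3}(\mcQ\otimes\mcE)=0$; the latter two follow from Serre duality plus semistability of $\mcE\otimes\mcQ(\mp 2)$ of negative slope (or, avoiding stability of $\mcQ$, from global generation of $\mcQ^{\vee}(2)$ and $\mcQ(1)$ together with $H^{0}(\mcE)=0$). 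Your argument is more elementary: it avoids the derived-category reduction and, more significantly, the appeal to the behaviour of $\mcE$ on a general conic. What the paper's route buys is economy within its own framework, since the triangles (\ref{dt-instanton}) and (\ref{dt-Serre}) are needed anyway for the resolutions in Sections~\ref{subsec-c2=3}--\ref{subsec-c2=4}.

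Two imprecisions worth fixing, neither fatal. First, your claim that $H^{\geq 2}(\mcE)=0$ "follows from Serre duality $H^{i}(\mcE)\simeq H^{3-i}(\mcE)^{\vee}$ together with $H^{0}(\mcE)=0$" is not right as stated: Serre duality gives $H^{2}(\mcE)\simeq H^{1}(\mcE(-2))^{\vee}$, which is controlled neither by $H^{0}(\mcE)$ nor directly by $h^{1}(\mcE(-1))=0$; you should simply lean on \cite[Lemma~3.1]{Kuznetsov12}, exactly as the paper does. Second, the worry in your last paragraph about establishing $H^{2}(\mcQ\otimes\mcE)=H^{2}(\mcQ^{\vee}\otimes\mcE)=0$ is unnecessary: in both long exact sequences the degree-$2$ $\Ext$ group is pinched between $H^{2}(\mcE)$ (resp.\ $\Ext^{2}(\mcR,\mcE)$) and an $H^{3}$ term, so only the $H^{3}$ vanishings are ever used, and those you already obtain by the slope argument.
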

\begin{proof}
First of all, $H^{\geq 2}(\mcE(1))=0$ follows from \cite[Lemma~3.1]{Kuznetsov12}. 
Hence $\mcH^{\geq 1}(\RR_{\mcO_{X}(-1)}\LL_{\mcO_{X}}(\mcE(1)))=0$ by (\ref{dt-instanton}). 
Then by (\ref{dt-Serre}), it suffices to show $\Ext^{i}(\mcR,\mcE) \simeq H^{i}(\mcR^{\vee} \otimes \mcE)=0$ for $i \geq 2$. 

Let $s \colon \mcO_{X} \to \mcR^{\vee}$ be a general section and consider the following exact sequence:
\[0 \to \mcE \mathop{\to}^{s \otimes \id_{\mcE}} \mcR^{\vee} \otimes \mcE \to \mcI_{C/X}(1) \otimes \mcE \to 0,\]
where $C$ is the conic corresponding to $s$. 
It is well-known that every conic $C$ on $X$ can be realized as the zero scheme of a global section of $\mcR^{\vee}$ (see \cite[Theorem~5.6]{CZ19} and its reference). 
Since $H^{\geq 2}(\mcE)=0$ by \cite[Lemma~3.1]{Kuznetsov12},  
it is enough to show that $H^{\geq 2}(\mcI_{C/X}(1) \otimes \mcE)=0$ for a general conic $C$. 
This vanishing holds since it was shown by \cite[Theorem~4.24]{Sanna14} that $\mcE|_{C} \simeq \mcO_{\P^{1}}^{\oplus 2}$ for a general conic $C$. 
We complete the proof.
\end{proof}

The conclusion of this section is as follows. 
\begin{lem}\label{lem-conclu}
Let $\mcE$ be an instanton bundle on $X$. 
\begin{enumerate}
\item There is an exact sequence
\begin{align}\label{ex-tau}
\Ext^{1}(\mcR,\mcE) \otimes \mcQ(-1) \mathop{\to}^{\tau} \Ext^{1}(\mcQ(-1),\mcE) \otimes \mcR \to \mcH^{0}(\LL_{\mcO_{X}}(\mcE(1))) \to 0.
\end{align}
\item $\mcE$ is weak Fano if and only if $\tau$ is surjective and $c_{2}(\mcE) < 5$. 
\item If $\mcE$ is weak Fano, there is an exact sequence
\begin{align}\label{ex-2terms}
0 \to \Hom(\mcR,\mcE) \otimes \mcQ(-1) \to \Hom(\mcQ(-1),\mcE) \otimes \mcR \to \mcV \to \Ext^{1}(\mcR,\mcE) \otimes \mcQ(-1)  \mathop{\to}^{\tau} \Ext^{1}(\mcQ(-1),\mcE) \otimes \mcR \to 0,
\end{align}
where $\mcV$ is the vector bundle defined in Lemma~\ref{lem-tool-instanton}.
\end{enumerate}
\end{lem}
\begin{proof}
(1) Since $\LL_{\mcO_{X}}(\mcE(1))$ is concentrated in $\{0,-1\}$, 
so is $\RR_{\mcO_{X}(-1)}\LL_{\mcO_{X}}(\mcE(1))$. 
Thus the morphism $\mcH^{0}(\RR_{\mcO_{X}(-1)}\LL_{\mcO_{X}}(\mcE(1))) \to \mcH^{0}(\LL_{\mcO_{X}}(\mcE(1)))$ induced by (\ref{dt-instanton}) is an isomorphism. 
Then by taking cohomology of the distinguished triangle (\ref{dt-Serre}) and using Lemma~\ref{lem-extgeq2}, 
we have (\ref{ex-tau}). 

(2) follows from Proposition~\ref{prop-FHI} and Remark~\ref{rem-leftmut}. 

(3) If $\mcE$ is weak Fano, then $\mcH^{0}(\RR_{\mcO_{X}(-1)}\LL_{\mcO_{X}}(\mcE(1)))=\mcH^{0}(\LL_{\mcO_{X}}(\mcE(1)))=0$ as in Remark~\ref{rem-leftmut}. Then (\ref{ex-2terms}) follows from (\ref{dt-instanton}), (\ref{dt-Serre}), and Lemma~\ref{lem-extgeq2}. 
\end{proof}

\subsection{Proof of Theorem~\ref{thm-resol} with $c_{2}=3$.}\label{subsec-c2=3}

Let $\mcE$ be a rank $2$ weak Fano bundle with $c_{1}(\mcE)=0$ and $c_{2}(\mcE)=3$. 
Then $\mcE(1)$ is globally generated and $\mcE$ is an instanton bundle by Proposition~\ref{prop-FHI}. 
%First, we check that $h^{0}(\mcE)=0$. 
%If not, then 
%{\color{red}
%We will see in Proposition~\ref{prop-Ishikawa} that $h^{0}(\mcE)=0$ in this case, and hence $\mcE$ is an instanton bundle as discussed in Remark~\ref{rem-stability}. 
%}
%Let $\mcV$ be as in Lemma~\ref{lem-tool2} corresponding to this $\mcE$. 
Hence $\mcE|_{l} \simeq \mcO_{X}^{2}$ or $\mcO_{X}(-1) \oplus \mcO_{X}(1)$ for every line $l$ on $X$. 
Then it follows from \cite[Definition~8.2 and Proposition~8.7]{Sanna17} that $\RHom(\mcR,\mcE) \simeq \Bbbk$. 
%Taking the cohomologies of the distinguished triangle (\ref{dt-Serre}), 
Then (\ref{ex-2terms}) gives an exact sequence 
\[0 \to \mcQ(-1) \mathop{\to}^{\alpha} \mcR^{\oplus a} \to \mcV \to 0, \]
where $\mcV:=\mcH^{-1}(\RR_{\mcO_{X}(-1)}\LL_{\mcO_{X}}(\mcE(1)))$ and $a=\hom(\mcQ(-1),\mcE)$. 
Note that $\RG(\mcE(1))=\Bbbk^{\oplus 8}$ and $\RG(\mcE)=\Bbbk[-1]$ by Proposition~\ref{prop-Ishikawa}~(1), (2) and (6). 
%By \cite[Lemma~3.1]{Kuznetsov12}, 
%{\color{red}
%Note that $\RG(\mcE(1))=\Bbbk^{\oplus 8}$ by the Hirzebruch-Riemann-Roch theorem and the Kawamata-Viehweg vanishing theorem. 
%and $\RG(\mcE)=\Bbbk[-1]$ by \cite[Lemma~3.1]{Kuznetsov12}. 
%}
Then the exact sequence (\ref{ex-instanton-wF}) in Lemma~\ref{lem-tool-instanton} shows $\rk \mcV=7$, which implies $a=5$. 
Set $\mcK:=\Ker(\ev \colon H^{0}(\mcE(1)) \otimes \mcO_{X} \to \mcE(1))$ and $\mcK':=\Ker(\mcR^{\oplus 5} \epm \mcV \epm \mcK)$. 
Since $h^{1}(\mcE)=1$, 
% by \cite[Lemma~3.1]{Kuznetsov12}, 
we have the following diagram by (\ref{ex-instanton-wF}): 
\[\xymatrix{
&\mcQ(-1) \ar@{^{(}->}[d] \ar@{=}[r]& \mcQ(-1) \ar@{^{(}->}[d]&& \\
0 \ar[r] & \mcK' \ar[r] \ar@{->>}[d]& \mcR^{\oplus 5} \ar[r] \ar@{->>}[d]& \mcK \ar[r] \ar@{=}[d]& 0 \\
0 \ar[r] & \mcO_{X}(-1) \ar[r] & \mcV \ar[r] & \mcK \ar[r] & 0.
}\]
Since $\Ext^{1}(\mcO_{X}(-1),\mcQ(-1))=0$, 
we have $\mcK'=\mcO_{X}(-1) \oplus \mcQ(-1)$. 
Regarding this exact sequence $0 \to \mcO_{X}(-1) \oplus \mcQ(-1) \to \mcR^{\oplus 5} \to \mcK \to 0$ as a resolution of $\mcK$, 
we have the desired resolution 
\[0 \to \mcO_{X}(-1) \oplus \mcQ(-1) \to \mcR^{\oplus 5} \to \mcO_{X}^{\oplus 8} \to \mcE(1) \to 0.\qed\]

\begin{rem}\label{rem-c23-another}
We can give another resolution by showing $\mcV \simeq \mcR^{\oplus 2} \oplus \mcQ^{\vee}$. 
By the universality of the co-evaluation map (see (\ref{ex-innerECE})), 
the map $\alpha \colon \mcQ(-1) \to \mcR^{\oplus 5}$ factors as follows: 
\[\xymatrix{
0 \ar[r] & \mcQ(-1) \ar[r]^{\alpha} & \mcR^{\oplus 5} \ar[r] & \mcV \ar[r] & 0 \\
0 \ar[r] & \mcQ(-1) \ar[r]^{\coev} \ar@{=}[u]&\mcR^{\oplus 3} \ar[r] \ar[u]_{\ol{\a}}& \mcQ^{\vee} \ar[r] \ar[u]& 0 \\
}\]
We claim $\ol{\a}$ is injective. 
To obtain a contradiction, suppose that $\ol{\a}$ is not injective. 
Since $\Im \ol{\a}$ contains $\Im \a$, $\rk \Im \ol{\a} \geq 3$. 
Since $\mcR$ is simple and $\Hom(\mcR^{\oplus 3},\mcR^{\oplus 5}) \simeq \Hom(\Bbbk^{\oplus 3},\Bbbk^{\oplus 5})$, 
it holds that $\Im(\ol{\a}) \simeq \mcR^{\oplus 2} \subset \mcR^{\oplus 5}$. 
Then it follows from (\ref{ex-line-our}) that $\mcV \simeq \mcI_{l/X} \oplus \mcR^{\oplus 3}$ for some line $l$, which contradicts that $\mcV$ is locally free. 
Hence $\ol{\a}$ is injective. 

Therefore, we obtain $0 \to \mcQ^{\vee} \to \mcV \to \mcR^{\oplus 2} \to 0$. 
Since $\Ext^{1}(\mcR,\mcQ^{\vee})=0$ (c.f. \cite{Orlov91}, \cite[Lemma~4.1]{Kuznetsov12}), 
we have $\mcV=\mcR^{\oplus 2} \oplus \mcQ^{\vee}$. 
By Lemma~\ref{lem-tool-instanton}, we obtain another resolution
\[0 \to \mcO_{X}(-1) \to \mcR^{\oplus 2} \oplus \mcQ^{\vee} \to \mcO_{X}^{\oplus 8} \to \mcE(1) \to 0.\]
\end{rem}
%\Hom(\mcQ(-1),\mcR \otimes V)=\Hom(\mcQ(-1),\mcR) \otimes V = \Hom(\Hom(\mcQ(-1),\mcR)^{\vee},V)

\subsection{Proof of Theorem~\ref{thm-resol} with $c_{2}=4$.}\label{subsec-c2=4}

Let $\mcE$ be a rank $2$ weak Fano bundle with $c_{1}(\mcE)=0$ and $c_{2}(\mcE)=4$. 
In this case, we need the following inequality, which will be used to indicate that there is no morphism between certain vector bundles. 

\begin{lem}\label{lem-codim2class}
Let $X$ be a smooth projective $3$-fold. 
Let $\mcF$ be a nef locally free sheaf and $\mcG \subset \mcF$ a subsheaf of rank $2$. 
Let $\pi \colon \P_{X}(\mcF) \to X$ be the projectivization of $\mcF$ and $\xi$ a tautological divisor. 

Suppose that $\Hom(\mcG(B),\mcF)=0$ for every non-zero effective divisor $B>0$ on $X$. 
Then the following inequality holds: 
\begin{align}\label{ineq-codim2}
0\leq &(c_{1}(\mcF)^{3}-2c_{1}(\mcF)c_{2}(\mcF)+c_{3}(\mcF))-(c_{1}(\mcF)^{2}-c_{2}(\mcF))\ch_{1}(\mcG)\\
&+\frac{1}{2}c_{1}(\mcF)\left(\ch_{1}(\mcG)^{2}-2\ch_{2}(\mcG)\right). \notag
\end{align}
\end{lem}
\begin{proof}
Let $\pi^{\ast}\mcG \to \mcO_{\P(\mcF)}(\xi)$ be the morphism corresponding to the inclusion $i \colon \mcG \hra \mcF$. 
Tensoring with $\mcO_{\P(\mcF)}(-\xi)$, we obtain a map 
$\pi^{\ast}\mcG(-\xi) \to \mcO_{\P(\mcF)}$. 
Let $\mcI$ be its image, which is an ideal sheaf. 
Then there is an effective divisor $D$ and a closed subscheme $Z$ with $\codim_{\P(\mcF)}Z \geq 2$ such that $\mcI=\mcI_{Z/\P(\mcF)}(-D)$. 
Let $\alpha \colon \pi^{\ast}\mcG \epm \mcI_{Z/\P(\mcF)}(-D) \otimes \mcO_{\P(\mcF)}(\xi)$ be the surjection obtained by this way. 

Let us show $D=0$. 
Since $D$ is an effective divisor, there are $a \geq 0$ and a divisor $B$ on $X$ such that $D \sim a\xi+\pi^{\ast}B$. 
Since the push-forward of the composition $\pi^{\ast}\mcG \epm \mcI_{Z/\P(\mcF)}((1-a)\xi-\pi^{\ast}B) \to \mcO_{\P(\mcF)}(1)$ is the original inclusion $\mcG \hra \mcF$, 
this inclusion factors $\Sym^{1-a}\mcF \otimes \mcO(-B)$. 
Hence $a=0$ and $D \sim \pi^{\ast}B$, which implies $B$ is effective since so is $D$. 
Then $B=0$ follows from our assumption. 
Hence $D=0$. 

Now we obtain a surjection $\a \colon \pi^{\ast}\mcG \epm \mcI_{Z/\P(\mcF)}(\xi)$. 
Since the projection $\pi$ is a flat morphism, $\pi^{\ast}\mcG$ remains to be torsion free, and hence so is $\Ker \a$. 
Since we suppose that $\rk \mcG=2$, there is an exact sequence 
\[0 \to \pi^{\ast}\det \mcG \otimes \mcO_{\P(\mcF)}(-\xi) \otimes \mcI_{W/\P(\mcF)} \to \pi^{\ast}\mcG \to \mcI_{Z/\P(\mcF)}(\xi) \to 0,\]
where $W \subset X$ is a closed subscheme of $\codim_{X}W \geq 2$ and $\det \mcG$ is the determinant invertible sheaf of the torsion free sheaf $\mcG$ defined as in \cite[Definition~in~P.129]{SRS}.  
Hence we obtain that 
$\ch(\pi^{\ast}\mcG)=\ch(\pi^{\ast}\det \mcG \otimes \mcO_{\P(\mcE)}(-\xi) \otimes \mcI_{W/\P(\mcF)})+\ch(\mcI_{Z/\P(\mcF)}(\xi))$. 
Note that 
$\ch(\mcI_{Z/\P(\mcF)})=1-\ch(\mcO_{Z})$, 
$\ch(\mcI_{W/\P(\mcF)})=1-\ch(\mcO_{W})$, and 
$\ch_{k}(\mcO_{Z})=\ch_{k}(\mcO_{W})=0$ for $k \leq 1$ since $\codim_{X}Z \geq 2$ and $\codim_{X}W \geq 2$. 
Thus we obtain the following equality by direct computation:
\begin{align}\label{eq-codim2eff}
\ch_{2}(\mcO_{Z})+\ch_{2}(\mcO_{W})%&=\frac{(-\xi+\pi^{\ast}\ch_{1}(\mcG))^{2}}{2} + \frac{\xi^{2}}{2} - \ch_{2}(\mcG) \\
&=\xi^{2}-\xi.\pi^{\ast}\ch_{1}(\mcG)+\frac{1}{2}\left(\ch_{1}(\mcG)^{2}-2\ch_{2}(\mcG)\right).
\end{align}
%
\begin{comment}[Computation]
Note that 
\begin{align*}
&\ch(\pi^{\ast}\det(\mcG) \otimes \mcO_{\P(\mcF)}(-\xi) \otimes \mcI_{W/\P(\mcF)}) \\
=&\ch(\pi^{\ast}\det(\mcG) \otimes \mcO_{\P(\mcF)}(-\xi)) \ch(\mcI_{W/\P(\mcF)}) \\
=&\sum_{m} \frac{(-\xi+\pi^{\ast}\ch_{1}(\mcG))^{m}}{m!} (\ch(\mcO_{\P(\mcF)})-\ch(\mcO_{W})) \\
=&\sum_{m} \frac{(-\xi+\pi^{\ast}\ch_{1}(\mcG))^{m}}{m!} - \sum_{m} \frac{(-\xi+\pi^{\ast}\ch_{1}(\mcG))^{m} \ch(\mcO_{W})}{m!} \\
\text{ and } \\
&\ch(\mcI_{Z/\P(\mcF)}(\xi)) \\
=&\sum_{m} \frac{\xi^{m}}{m!} (\ch(\mcO_{\P(\mcF)})-\ch(\mcO_{Z})) \\
=&\sum_{m} \frac{\xi^{m}}{m!} - \sum_{m} \frac{\xi^{m} \ch(\mcO_{Z})}{m!}.
\end{align*}
Since $\codim_{X}W \geq 2$ and $\codim_{X}Z \geq 2$, we have $\ch_{k}(\mcO_{Z})=\ch_{k}(\mcO_{W})=0$ for $k \leq 1$. 
Hence we have
\begin{align*}
\ch_{2}(\pi^{\ast}\mcG)
&=\ch_{2}(\pi^{\ast}\det (\mcG) \otimes \mcO_{\P(\mcF)}(-\xi) \otimes \mcI_{W/\P(\mcF)})+\ch_{2}(\mcI_{Z/\P(\mcF)}(\xi)) \\
&=\left(\frac{(-\xi+\pi^{\ast}\ch_{1}(\mcG))^{2}}{2} - \ch_{2}(\mcO_{W}) \right) + \left( \frac{\xi^{2}}{2}-\ch_{2}(\mcO_{Z}) \right). 
\end{align*}
By rearranging the above equation, we obtain
\begin{align*}
\ch_{2}(\mcO_{Z})+\ch_{2}(\mcO_{W})
&=\frac{(-\xi+\pi^{\ast}\ch_{1}(\mcG))^{2}}{2} + \frac{\xi^{2}}{2} - \ch_{2}(\mcG) \\
&=\xi^{2}-\xi.\pi^{\ast}\ch_{1}(\mcG)+\frac{1}{2}\left(\ch_{1}(\mcG)^{2}-2\ch_{2}(\mcG)\right).
\end{align*}
\end{comment}
%
Therefore the right hand side of the above equality $\xi^{2}-\xi.\pi^{\ast}\ch_{1}(\mcG)+\frac{1}{2}\left(\ch_{1}(\mcG)^{2}-2\ch_{2}(\mcG)\right)$ is numerically equivalent to an effective codimension $2$ cycle. 
Since $\mcF$ is nef, 
letting $n:=\dim \P_{X}(\mcF)=\dim X+\rk \mcF-1$, 
we have 
\begin{align} \label{ineq-codim2-pre}
0 \leq \xi^{n}-\xi^{n-1}.\pi^{\ast}\ch_{1}(\mcG)+\xi^{n-2}.\frac{1}{2}\left(\ch_{1}(\mcG)-2\ch_{2}(\mcG)\right) 
\end{align}
by (\ref{eq-codim2eff}). 
Since $\pi_{\ast}\xi^{n}=c_{1}(\mcF)^{3}-2c_{1}(\mcF)c_{2}(\mcF)+c_{3}(\mcF)$, the equalities $\pi_{\ast}\xi^{n-1}=c_{1}(\mcF)^{2}-c_{2}(\mcF)$ and $\pi_{\ast}\xi^{n-2}=c_{1}(\mcF)$ hold,
and evaluating them to (\ref{ineq-codim2-pre}) gives the desired inequality (\ref{ineq-codim2}). 
\end{proof}

Now Lemma~\ref{lem-codim2class} enables us to compute the following derived Hom spaces. 

\begin{lem}\label{lem-vani-c24}
$\RHom(\mcR,\mcE) \simeq \Bbbk^{2}[-1]$ and $\RHom(\mcQ(-1),\mcE) \simeq 0$. 
\end{lem}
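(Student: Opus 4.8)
The plan is to compute the graded pieces of $\RHom(\mcR,\mcE)$ and $\RHom(\mcQ(-1),\mcE)$ for an instanton bundle $\mcE$ with $c_2(\mcE)=4$, exploiting the fact that higher $\Ext$'s above degree $1$ already vanish by Lemma~\ref{lem-extgeq2}. So the only unknowns are $\hom(\mcR,\mcE)$, $\ext^1(\mcR,\mcE)$, $\hom(\mcQ(-1),\mcE)$, and $\ext^1(\mcQ(-1),\mcE)$, and the task reduces to pinning these four numbers down. First I would compute the Euler characteristics $\chi(\mcR^\vee\otimes\mcE)$ and $\chi(\mcQ^\vee(1)\otimes\mcE)$ via Riemann--Roch on $X$ (using $c_1(\mcE)=0$, $c_2(\mcE)=4$, $c_3(\mcE)=0$, and the known Chern data of $\mcR$, $\mcQ$), which gives $\hom(\mcR,\mcE)-\ext^1(\mcR,\mcE)$ and $\hom(\mcQ(-1),\mcE)-\ext^1(\mcQ(-1),\mcE)$. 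I expect these to come out so that the claimed answer, $\RHom(\mcR,\mcE)\simeq\Bbbk^2[-1]$ (i.e.\ $\hom=0$, $\ext^1=2$) and $\RHom(\mcQ(-1),\mcE)=0$, is consistent with the Euler characteristic count; the content is then to prove the individual vanishings.

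For the vanishing $\hom(\mcR,\mcE)=0$, I would use the restriction of $\mcE$ to lines: since $\mcE$ is a weak Fano (hence nef) instanton bundle, $\mcE|_l\simeq\mcO_{\P^1}^{\oplus2}$ or $\mcO_{\P^1}(-1)\oplus\mcO_{\P^1}(1)$ for every line $l$, and a nonzero map $\mcR\to\mcE$ would, after restricting to a general line where $\mcR|_l\simeq\mcO_{\P^1}(-1)^{\oplus2}$ (the generic splitting type), produce too many sections — combined with a semistability/slope argument for $\mcE$ (stable, $c_1=0$) this forces $\Hom(\mcR,\mcE)=0$; alternatively one can invoke the jumping-line analysis of Sanna in the spirit of \cite[Section~8]{Sanna17} exactly as was done for $c_2=3$ in Section~\ref{subsec-c2=3}, but now the relevant locus has changed. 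For $\hom(\mcQ(-1),\mcE)=0$: a nonzero section of $\mcQ^\vee(1)\otimes\mcE$ would similarly be incompatible with stability of $\mcE$ and the conic-restriction statement $\mcE|_C\simeq\mcO_{\P^1}^{\oplus2}$ for general $C$ used in Lemma~\ref{lem-extgeq2}; here I would use the exact sequence $0\to\mcR\to\mcO^{\oplus5}\to\mcQ\to0$ (twisted), i.e.\ (\ref{ex-Gr}) dualized, to relate $\Hom(\mcQ(-1),\mcE)$ to $H^0(\mcE(1))$ and $\Hom(\mcR(-1),\mcE)$, reducing to cohomology vanishings for $\mcE$ that follow from $h^0(\mcE)=h^0(\mcE(-1))=0$ (instanton condition plus stability) and Lemma~\ref{lem-extgeq2}.

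Once both $\hom$'s are zero, the Euler characteristic computation immediately yields $\ext^1(\mcR,\mcE)=2$ and $\ext^1(\mcQ(-1),\mcE)=0$, completing the lemma. I would also sanity-check the numbers against Lemma~\ref{lem-tool-instanton}: feeding $\RHom(\mcQ(-1),\mcE)=0$ and $\RHom(\mcR,\mcE)\simeq\Bbbk^2[-1]$ into the triangle (\ref{dt-Serre}) gives $\RR_{\mcO(-1)}\LL_{\mcO}(\mcE(1))\simeq\mcQ(-1)^{\oplus2}[1]$, hence $\mcV\simeq\mcQ(-1)^{\oplus2}$, which has rank $6=14-2c_2(\mcE)$ only if... actually $\rk\mcV$ should be $c_2(\mcE)-2 + \rk$ of the rest; checking $h^1(\mcE)=c_2(\mcE)-2=2$ from \cite[Lemma~3.1]{Kuznetsov12} matches $H^1(\mcE)\otimes\mcO(-1)$ contributing rank $2$, and $h^0(\mcE(1))=6$ so the map $\mcO^{\oplus6}\to\mcE(1)$ has kernel of rank $4$, forcing $\rk\mcV=6$ — consistent. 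The main obstacle I anticipate is the vanishing $\hom(\mcR,\mcE)=0$: for $c_2=3$ this followed cleanly from Sanna's structural results \cite[Definition~8.2, Proposition~8.7]{Sanna17} which were tailored to small $c_2$, and for $c_2=4$ the instanton bundles have not been studied in the literature (as the introduction emphasizes), so I expect to need a genuinely new geometric argument — most likely a careful analysis of the zero locus of a hypothetical section of $\mcR^\vee\otimes\mcE$ (a conic or a line, via the surjection $\mcO^{\oplus5}\to\mcR^\vee$) together with the constraint that $\mcE$ restricted to lines is balanced or $(-1,1)$, to derive a contradiction with $c_2(\mcE)=4$ or with stability.
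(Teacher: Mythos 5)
Your reduction is exactly the paper's: Hirzebruch--Riemann--Roch gives $\chi(\mcQ(-1),\mcE)=0$ and $\chi(\mcR,\mcE)=-2$, and Lemma~\ref{lem-extgeq2} kills $\Ext^{\geq 2}$, so everything hinges on the two vanishings $\Hom(\mcR,\mcE)=0$ and $\Hom(\mcQ(-1),\mcE)=0$. Neither is actually established in your proposal, and the one concrete mechanism you offer for the first provably cannot work. First, $\mcR|_{l}\simeq\mcO_{\P^{1}}\oplus\mcO_{\P^{1}}(-1)$ for \emph{every} line $l$ (it is a degree $-1$ subsheaf of a trivial bundle), not $\mcO(-1)^{\oplus 2}$. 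More fundamentally, any argument using only the splitting types $\mcE|_{l}\in\{\mcO^{\oplus2},\mcO(-1)\oplus\mcO(1)\}$ is doomed: weak Fano instantons with $c_{2}=3$ satisfy exactly the same restriction condition on every line, yet for them $\Hom(\mcR,\mcE)=\Bbbk\neq0$ (see Section~\ref{subsec-c2=3}). The paper's actual argument is different: a nonzero map $\mcR\to\mcE$ is injective with torsion cokernel $\mcT$; global generation of $\mcE(1)$ together with $\RG(\mcE(1))=\Bbbk^{6}$ and $\RG(\mcR(1))=\Bbbk^{5}$ gives $\RG(\mcT(1))=\Bbbk$, hence a surjection $\mcO\twoheadrightarrow\mcT(1)$; pulling back the extension produces $0\to\mcO(-1)\to\mcR(1)\oplus\mcO\to\mcE(1)\to0$ and therefore $H^{1}(\mcE)=0$, contradicting $h^{1}(\mcE)=c_{2}(\mcE)-2=2$. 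This is precisely where $c_{2}=4$ enters, through the section count and the size of $H^{1}(\mcE)$, not through the behavior on lines.

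The sketch for $\Hom(\mcQ(-1),\mcE)=0$ also does not close. Twisting the tautological sequence by $\mcE(1)$ identifies $\Hom(\mcQ(-1),\mcE)$ with the kernel of a map $V^{\vee}\otimes H^{0}(\mcE(1))\to\Hom(\mcR(-1),\mcE)$ between large spaces, and its injectivity does not follow from $h^{0}(\mcE)=h^{0}(\mcE(-1))=0$ or from the conic-restriction statement used in Lemma~\ref{lem-extgeq2}. The paper instead runs a multi-step saturation analysis: a nonzero $s\colon\mcE\to\mcQ^{\vee}(1)$ is injective by stability; its saturation is shown to equal $\mcE$ itself (using Step~1 and the fact that the zero scheme of a section of $\mcQ$ on $X$ is a line or a point); the resulting sequence $0\to\mcE\to\mcQ^{\vee}(1)\to\mcI_{Z}(2)\to0$ with $Z$ purely one-dimensional yields $h^{0}(\mcO_{Z})=3$, while global generation of $\omega_{Z}(1)$ with $h^{0}(\omega_{Z}(1))=1$ forces $\omega_{Z}(1)\simeq\mcO_{Z}$ and $h^{0}(\mcO_{Z})=1$, a contradiction. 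You do flag the first vanishing as the main obstacle and say a genuinely new geometric argument would be needed; that self-assessment is accurate, but it means the proposal has a genuine gap at the heart of the lemma, and the specific route you lean on (restriction to lines) cannot be repaired.
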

%we obtain $\mcV \simeq \mcQ(-1)^{\oplus a}$ with $a=\ext^{1}(\mcR,\mcE)$. 
%Since $\rk \mcV=6$, we have $\mcV \simeq \mcQ(-1)^{\oplus 2}$ and hence the desired resolution $0 \to \mcO(-1)^{\oplus 2} \to \mcQ(-1)^{\oplus 2} \to \mcO^{\oplus 6} \to \mcE(1) \to 0$. 

%We spend the rest of this section for the proof of Lemma~\ref{lem-vani-c24}. 
\begin{proof}
The Hirzebruch-Riemann-Roch theorem gives 
%{\color{red}
$\chi(\mcQ(-1),\mcE)=0$
%=\chi(\mcQ^{\vee}(1) \otimes \mcE)
%=\int_{X} \ch(\mcQ^{\vee})\ch(\mcO_{X}(1))\ch(\mcE)\td(X)=0$
and 
$\chi(\mcR,\mcE)=-2$.
%=\chi(\mcR^{\vee} \otimes \mcE)
%=\int_{X} \ch(\mcR^{\vee})\ch(\mcE)\td(X)=-2$. 
%}
Since $\Ext^{\geq 2}(\mcQ(-1),\mcE)=\Ext^{\geq 2}(\mcR,\mcE)=0$ by Lemma~\ref{lem-extgeq2}, 
it is enough to show $\Hom(\mcR,\mcE)=\Hom(\mcQ(-1),\mcE)=0$. 
By (\ref{ex-2terms}), it suffices to show only $\Hom(\mcQ(-1),\mcE)=0$. 
Set $\mcF:=\mcE(1)$. Then the above equality is equivalent to 
%$\Hom(\mcR^{\vee},\mcF)=\Hom(\mcQ,\mcF)=0$. 
$\Hom(\mcQ,\mcF)=0$. 

Assume the contrary and let $s \colon \mcQ \to \mcF$ be a non-zero morphism. 
Set $\mcG:=\Im(s)$, $\mcK:=\Ker(s)$, and $\mcT:=\mcF/\mcG$. 
Then $\mcK$ is reflexive and $\mcG$ is torsion free. 
Let $\alpha \colon \mcG \hra \mcF$ be the inclusion map. 
Then we have the following two exact sequences 
\begin{align}
&0 \to \mcK \to \mcQ \to \mcG \to 0, \text{ and } \label{ex-c241st}\\
&0 \to \mcG \mathop{\to}^{\alpha} \mcF \to \mcT \to 0. \label{ex-c242nd}
\end{align}
%By the same argument as in {\color{red}Step 1}, we conclude that $\rk \mcG=2$. 

Suppose that $\rk \mcG=1$. 
Let $a \in \Z$ be an integer such that $\mcG^{\vee\vee} \simeq \mcO_{X}(a)$. 
Then there are non-zero morphisms $\mcQ \to \mcO_{X}(a)$ and $\mcO_{X}(a) \to \mcF$. 
Since $\Hom(\mcQ,\mcO_{X})=0$, we have $a \geq 1$. 
Conversely, since $H^{0}(\mcE)=0$, we have $a \leq 0$. 
This is a contradiction. 

Hence $\mcG$ is torsion free and of rank $2$.
Then $\mcK$ is a reflexible sheaf of rank $1$, which means there is an integer $a$ such that $\mcK=\mcO_{X}(-a)$. 
Note that the inclusion $\mcO_{X}(-a) \simeq \mcK \subset \mcQ$ implies $a \geq 0$.
In addition, since $\mcT$ is a torsion sheaf, $c_1(\mcT) \geq 0$, and hence the sequence (\ref{ex-c242nd}) together with the formulas
$c_{1}(\mcG)=c_{1}(\mcG^{\vee\vee})=1+a$ and $c_{1}(\mcF)=2$ implies $a \leq 1$. 
Hence $a \in \{0,1\}$.

The slope stability shows that there is no non-zero effective divisor $B$ such that $\Hom(\mcQ(B),\mcF) \neq 0$. 
Thus the surjection $\mcQ \twoheadrightarrow \mcG$ yields that there is no non-zero effective divisor $B$ such that $\Hom(\mcG(B),\mcF) \neq 0$. 
Thus the inequality in Lemma~\ref{lem-codim2class} gives $10a^{2}-(a+1) \geq 0$,
which implies $a=1$ and hence $\mcK \simeq \mcO_{X}(-1)$. 

Now (\ref{ex-c241st}) gives $\RG(\mcG) \simeq \RG(\mcQ) \simeq \Bbbk^{\oplus 5}$. 
Since $\RG(\mcF)=\Bbbk^{\oplus 6}$ by Proposition~\ref{prop-Ishikawa}~(1) and (2), 
%Since the Hirzebruch-Riemann-Roch theorem and the Kawamata-Viehweg vanishing give $\RG(\mcF)=\Bbbk^{\oplus 4}$, 
it holds that $\RG(\mcT) \simeq \Bbbk$ by (\ref{ex-c242nd}). 
Since $\mcF$ is globally generated, so is $\mcT$. 
%{\color{red} 
Let $t \colon \mcO_{X} \to \mcF$ be a section such that 
the composition $\displaystyle \mcO_{X} \mathop{\to}^{t} \mcF \to \mcT$ is surjective. 
Then we obtain a surjection $\mcG \epm \Cok t$. 
Composing this with $\mcQ \epm \mcG$ gives a surjection $\beta \colon \mcQ \to \Cok t$. 
Since $\Cok t$ has a resolution $0 \to \mcO_{X} \to \mcF \to \Cok t \to 0$, 
%}
the argument using the depth of sheaves shows that
the sheaf $\mcH:=\Ker \beta$ that fits in an exact sequence
\[ 0 \to \mcH \to \mcQ \xrightarrow{\beta} \Cok t \to 0 \]
 is a locally free sheaf of rank $2$.
%
%Note that $\Cok t \simeq \mcI_{C/X}(2)$, where $C$ is a smooth elliptic curve of degree $9$ since we take $t$ generally. 
%Thus we obtain a surjection $\mcG \epm \mcI_{C/X}(2)$. 
%Composing this surjection and the surjection $\mcQ \epm \mcG$,  we obtain a surjection $\beta \colon \mcQ \to \mcI_{C/X}(2)$. 
%Since $\mcI_{C/X}(2)$ has a resolution $0 \to \mcO_{X} \to \mcF \to \mcI_{C/X}(2) \to 0$, $\mcH:=\Ker \beta$ is a locally free sheaf of rank $2$.
%
\begin{comment}[Proof of the locally freeness of $\mcH$]
Actually, considering 
\[\xymatrix{
0 \ar[r] & \mcH \ar[r] \ar@{=}[d]& \mcQ \times_{\Cok t}\mcF \ar[r] \ar@{}[rd]|{\Box} \ar[d] & \mcF \ar[r] \ar@{->>}[d]& 0 \\
0 \ar[r] & \mcH \ar[r] & \mcQ \ar[r] & \Cok t \ar[r] & 0 
}\]
we obtain two exact sequences
\begin{align*}
&0 \to \mcH \to \mcQ \times_{\Cok t} \mcF \to \mcF \to 0 \text{ and } \\
&0 \to \mcO_{X} \to \mcQ \times_{\Cok t} \mcF \to \mcQ \to 0,
\end{align*}
which implies $\mcQ \times_{\Cok t} \mcF$ is locally free and so is $\mcH$. 
\end{comment}
%
Since $c_{1}(\mcF)=2H_{X}$, $c_{2}(\mcF)=9l$ and $c_{3}(\mcF)=0$, 
a computation using the formulas
$\ch_{1}(\mcF)=c_{1}(\mcF)$, 
$\ch_{2}(\mcF)=(1/2)(c_{1}(\mcF)^{2}-2c_{2}(\mcF))$, and
$\ch_{3}(\mcF)=(1/6)(c_{1}(\mcF)^{3}-3c_{1}(\mcF)c_{2}(\mcF)+3c_{3}(\mcF))$
yields
\[ \ch(\mcF)=\left(2,2H_{X},l,-\frac{7}{3} \right). \]
A similar calculation gives $\ch(\mcQ)=\left(3,H_{X},-\frac{1}{2}l,-\frac{1}{6} \right)$. 
%Since 
%$\ch(\mcQ)=\left(3,H_{X},-\frac{1}{2}l,-\frac{1}{6} \right)$ and 
%$\ch(\mcF)=\left(2,2H_{X},l,-\frac{7}{3} \right)$, 
Hence we have 
\[ \ch(\mcH)=\ch(\mcQ)-\ch(\Cok t)=\ch(\mcQ)-(\ch(\mcF)-\ch(\mcO_{X}))=\left(2,-H_{X},-\frac{3}{2}l,\frac{13}{6} \right), \]
and thus
\[ c_{3}(\mcH)=\frac{1}{6}\ch_{1}(\mcH)^{3}-\ch_{1}(\mcH)\ch_{2}(\mcH)+2\ch_{3}(\mcH)=2. \] 
However, since $\mcH$ is locally free of rank $2$, $c_{3}(\mcH)=0$. 
This is a contradiction. 
Hence $\Hom(\mcQ,\mcF)=0$, which completes the proof of Lemma~\ref{lem-vani-c24}. 
\end{proof}
%{\color{red}
%By Lemma~\ref{lem-mutation-dP5} and Lemma~\ref{lem-vani-c24}, 
%}
Then (\ref{ex-2terms}) shows $\mcV=\mcQ(-1)^{\oplus 2}$ and the desired resolution
%we have $\RR_{\mcO_{X}(-1)}\LL_{\mcO_{X}}(\mcE(1)) \simeq \mcQ(-1)^{\oplus 2}[1]$. 
%Thus by the distinguished triangle (\ref{dt-instanton}), 
%Thus by (\ref{dt-instanton}) and (\ref{ex-instanton-wF},
\[0 \to \mcO_{X}(-1)^{\oplus 2} \to \mcQ(-1)^{\oplus 2} \to \mcO_{X}^{\oplus 6} \to \mcE(1) \to 0. \]
We complete the proof of Theorem~\ref{mainthm-resol} when $c_{2}(\mcE)=4$. \qed

As a consequence of results in this section, we obtain the following characterization, which is stronger than Lemma~\ref{lem-conclu}. 
\begin{prop}\label{prop-chara}
Let $\mcE$ be a rank $2$ vector bundle on $X$ with $c_{1}(\mcE)=0$. 
Then $\mcE$ is a stable weak Fano bundle if and only if $\mcE$ is an instanton bundle with $\Ext^{1}(\mcQ(-1),\mcE)=0$. 
\end{prop}
\begin{proof}
The implication $\la$ is clear from (\ref{ex-tau}).
Suppose that $\mcE$ is a stable weak Fano bundle with $c_{1}(\mcE)=0$ and $c_{2}(\mcE) \geq 2$. 
Then $\mcE$ is an instanton bundle and $\mcE(1)$ is globally generated 
by Proposition~\ref{prop-FHI}. 
By Proposition~\ref{prop-Ishikawa}~(3) and (6), we have $c_{2} \in \{2,3,4\}$. 
If $c_{2}(\mcE)=2$, then $\mcE$ satisfies (\ref{ex-c21-result}), which implies $\Ext^{1}(\mcQ(-1),\mcE)=0$. 
If $c_{2}(\mcE)=3$, then $\RHom(\mcR,\mcE)=\Bbbk$ as explained in the beginning of Section~\ref{subsec-c2=3}. 
Then $\Ext^{1}(\mcQ(-1),\mcE)=0$ follows from (\ref{ex-2terms}). 
If $c_{2}(\mcE)=4$, then $\Ext^{1}(\mcQ(-1),\mcE)=0$ by Lemma~\ref{lem-vani-c24}. 
Hence we have $\Ext^{1}(\mcQ(-1),\mcE)=0$ for each case. 
\end{proof}

\subsection{Proof of Theorem~\ref{mainthm-resol}}
Now we classify rank $2$ weak Fano bundles on a del Pezzo $3$-fold of degree $5$ by proving Theorem~\ref{mainthm-resol}. 

Let $X$ be a del Pezzo $3$-fold of degree $5$ and $\mcE$ a normalized rank $2$ vector bundle. 
It is easy to see that $\mcE$ is a weak Fano bundle if $\mcE$ is one of (i) -- (viii) in Theorem~\ref{mainthm-resol}. 
Hence, it suffices to show that every normalized rank $2$ weak Fano bundle $\mcE$ satisfies one of (i) -- (viii). 

If $\mcE$ is decomposable, then it is easy to see $\mcE$ satisfies one of (i), (ii), or (iii). 
Assume $\mcE$ is indecomposable. 
If $c_{1}(\mcE)=-1$, then Corollary~\ref{cor-c1odd} gives that $\mcE \simeq \mcR$, i.e., $\mcE$ is of type (iv). 
Hence we may assume that $c_{1}(\mcE)=0$. 
Then by Proposition~\ref{prop-Ishikawa}~(3) and (4), we have $1 \leq c_{2}(\mcE) \leq 4$. 
Thus it follows from Theorem~\ref{thm-resol} that $\mcE$ is of type (v), (vi), (vii), or (viii) if $c_{2}(\mcE)=1,2,3,4$ respectively. 

Finally, we show the existence of an example for each of (i) -- (viii). 
This statement is trivial for the cases (i) -- (iv). 
Hence it suffices to show that, for a given $c \in \{1,2,3,4\}$, there exists weak Fano bundle $\mcE$ such that $c_{1}(\mcE)=0$ and $c_{2}(\mcE)=c$. 

When $c=1$, we have such an $\mcE$ as a unique extension of $\mcI_{l/X}$ by $\mcO_{X}$, where $l$ is a line on $X$ (c.f. Proposition~\ref{prop-Ishikawa}~(5)). 
When $c=2$, we have such an $\mcE$ as $\mcF(-1)$, where $\mcF$ is a special Ulrich bundle on $X$, which was constructed in \cite[Proposition~6.1]{Beauville}. 
Hence we may assume that $c \in \{3,4\}$. 
Note that all del Pezzo $3$-folds of degree $5$ are isomorphic \cite{Iskovskikh77}. 
Hence in this case, we can apply \cite[Theorem~5.8]{ACM17} and obtain an elliptic curve $C$ of degree $c+5$ such that $-K_{\Bl_{C}X}$ is nef and big, where $\Bl_{C}X$ is the blowing-up of $X$ along $C$.  
Let $\mcF$ be a unique non-trivial extension of $\mcI_{C/X}(-K_{X})$ by $\mcO_{X}$. 
Then $\Bl_{C}X$ is a member of $|\mcO_{\P(\mcF)}(1)|$ and $-K_{\Bl_{C}X} \simeq \mcO_{\P(\mcF)}(1)|_{\Bl_{C}X}$. 
Thus we can conclude that $\mcO_{\P(\mcF)}(1)$ is nef and big and so is $-K_{\P(\mcF)}$. 
Letting $\mcE:=\mcF(-1)$, we obtain an example of a rank $2$ weak Fano bundle $\mcE$ with $c_{1}(\mcE)=0$ and $c_{2}(\mcE)=c$. 
We complete the proof of Theorem~\ref{mainthm-resol}. \qed
%By Proposition~\ref{prop-Ishikawa}, $\mcE$ is stable. 
%Thus by \cite[Theorem~1.7 and Corollary~4.5]{FHI}, $\mcE$ is an instanton bundle such that $\mcE(1)$ is globally generated, i.e., satisfies (5-a). 

\section{Moduli spaces of rank $2$ weak Fano bundles on a del Pezzo $3$-fold of degree $5$}\label{sec-moduli}

Let $X$ be a del Pezzo $3$-fold of degree $5$. 
As an application of Theorem~\ref{mainthm-resol}, this section studies moduli spaces of rank $2$ weak Fano bundles on $X$. 
First of all, we define the moduli functors that will be studied in this section, by following \cite{HL10}. 

\begin{defi}\label{defi-modulispace}
Let $(\Sch/\Bbbk)$ be the category of schemes of finite type over the base field $\Bbbk$. 
Let $(\Sets)$ be the category of sets. 
For a given $(c_{1},c_{2}) \in \{(0,-1)\} \times \Z$, 
we consider the following functor $\mcM^{\wF}_{c_{1},c_{2}} \colon (\Sch/\Bbbk)^{\op} \to (\Sets)$ defined by 

\[\mcM^{\wF}_{c_{1},c_{2}}(S)=
\left\{ 
\begin{array}{l}
\mcE_{S} \text{ is a coherent sheaf } \\
\text{on } X \times S
\end{array}
\left|
\begin{array}{l}
%\mcE_{S} \text{ is a coherent sheaf on } X \times S \text{ such that } \\
\mcE_{S} \text{ is flat over } S, \\ 
\forall s \in S, \mcE_{\ol{s}}:=\mcE|_{X_{s}} \otimes_{\Bbbk(s)} \ol{\Bbbk(s)} \text{ is } \\
\text{a rank $2$ weak Fano bundle} \\
\text{on $X_{\ol{s}}=:X_{s} \otimes_{\Bbbk(s)} \ol{\Bbbk(s)}$ s.t. }\\
c_{1}(\mcE_{\ol{s}})=c_{1} \text{ and } c_{2}(\mcE_{\ol{s}})=c_{2} .
\end{array}
\right\}
\right/\sim
,
\]
where $\mcE_{S} \sim \mcE_{S}' \iff \exists \mcL_{S} \in \Pic(S)$ such that $\mcE_{S}' \simeq \mcE_{S} \otimes \pr_{2}^{\ast}\mcL_{S}$. 
In the above definition, note that being weak Fano is preserved under base change for a smooth projective variety over a field (c.f. \cite[Lemma~2.18]{Keeler03}). 
%\begin{rem}
%For a smooth projective variety $Y$ over a field $k$, 
%$Y$ is weak Fano if and only if $Y_{\ol{k}}:=Y \otimes_{k} \ol{k}$ is. 
%For the reference, see \url{https://arxiv.org/pdf/math/0108068.pdf}, Lemma~2.18.
%\end{rem}

As usual, a scheme $M^{\wF}_{c_{1},c_{2}}$ over $\Bbbk$ is said to be \emph{the coarse (resp. fine) moduli space} of $\mcM^{\wF}_{c_{1},c_{2}}$ if there is a natural transformation $\mcM^{\wF}_{c_{1},c_{2}} \to \Hom(-,M^{\wF}_{c_{1},c_{2}})$ which corepresents (resp. represents) the functor $\mcM^{\wF}_{c_{1},c_{2}}$ and the map $\mcM^{\wF}_{c_{1},c_{2}}(\Spec \Bbbk) \to M^{\wF}_{c_{1},c_{2}}(\Spec \Bbbk)$ is 
an isomorphism. 
\end{defi}

%\begin{rem}\label{rem-stacks}
%Let $f \colon X \to Y$ be a surjective morphism of noetherian schemes and $\a \colon \mcF \to \mcG$ be a morphism between coherent sheaves on $X$. 
%Suppose $\mcF,\mcG$ are flat over $Y$ and $\Supp(\mcF)=\Supp(\mcG)=X$. 
%Suppose that $f_{y} \colon \mcF_{y}:=\mcF|_{X_{y}} \to \mcG_{y}:=\mcG|_{X_{y}}$ is injective for every $y \in Y$. 
%Then $\a$ is injective and $\Cok \a$ is flat over $S$. 
%\end{rem}
%\begin{proof}
%Let $x \in X$ and $y=f(x)$. 
%We want to show the injectivity of $\a_{x} \colon \mcF_{x} \to \mcG_{x}$. 
%Let $\mathfrak{m}_{y}$ be the maximal ideal of $\mcO_{Y,y}$. 
%It follows from the assumption that 
%$\mcF_{x}/\mathfrak{m}_{y}\mcF_{x} \to \mcG_{x}/\mathfrak{m}_{y}\mcG_{x}$ is injective. 
%Then the assertion follows from Lemma~10.99.1 in \url{https://stacks.math.columbia.edu/tag/00MD}. 
%\end{proof}
%First, we treat some easy cases and known cases. 

By our classification results obtained so far, 
the determination of the moduli space $M^{\wF}_{c_{1},c_{2}}$ for the case $(c_{1},c_{2}) \not\in \{(0,3),(0,4)\}$ is not so difficult. 
On the other hand, $M^{\wF}_{0,3}$ has already been analyzed by Sanna \cite{Sanna17} since every weak Fano bundle $\mcE$ of rank $2$ with $(c_{1}(\mcE),c_{2}(\mcE))=(0,3)$ is an instanton bundle by Proposition~\ref{prop-FHI}. 
A description of $M^{\wF}_{c_{1},c_{2}}$ with $(c_{1},c_{2}) \neq (0,4)$ is summarized as the following proposition. 

\begin{prop}\label{prop-moduli-prelim}
\begin{enumerate}
\item The coarse moduli spaces $M^{\wF}_{0,-5}$, $M^{\wF}_{-1,0}$, $M^{\wF}_{0,0}$, and $M^{\wF}_{-1,2}$ are isomorphic to the point $\Spec \Bbbk$. 
\item The coarse moduli space $M^{\wF}_{0,1}$ is isomorphic to $\P^{2}$. 
%, but this is not a fine moduli space. 
%\item Among the moduli spaces in (1) and (2), only $M^{\wF}_{-1,2}$ is fine as a moduli space. 
\item The coarse moduli space $M^{\wF}_{0,2}$ (resp. $M^{\wF}_{0,3}$) is a smooth irreducible variety. Moreover, it is not fine (resp. fine) as a moduli space. 
\end{enumerate}
\end{prop}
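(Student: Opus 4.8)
The plan is to handle each part by unwinding the explicit descriptions obtained in Theorem~\ref{mainthm-resol}. For part (1), the bundles of type (i), (ii), (iii) and (iv) are each rigid and unique up to isomorphism: $\mcO_X(1)\oplus\mcO_X(-1)$, $\mcO_X\oplus\mcO_X(-1)$, $\mcO_X^{\oplus 2}$, and $\mcR$. Since the corresponding families have a single closed point, the coarse moduli spaces $M^{\wF}_{0,-5}$, $M^{\wF}_{-1,0}$, $M^{\wF}_{0,0}$, $M^{\wF}_{-1,2}$ are each $\Spec\Bbbk$. For part (2), a weak Fano bundle of type (v) is, by Proposition~\ref{prop-Ishikawa}, a non-split extension $0\to\mcO_X\to\mcE\to\mcI_{l/X}\to 0$ for a line $l$ on $X$; since $\Ext^1(\mcI_{l/X},\mcO_X)\simeq\Bbbk$, such $\mcE$ is determined by $l$, and conversely distinct lines give non-isomorphic $\mcE$ (one recovers $l$ as the unique jumping locus, or from $c_2$). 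The Hilbert scheme of lines on $X$ is $\P^2$ by Theorem~\ref{thm-FN}, so $M^{\wF}_{0,1}\simeq\P^2$; I would build the natural transformation using the universal family $U=\P_{\P^2}(\mcG)\to\P^2$ and the relative $\Ext$-sheaf to produce the universal extension, checking the corepresentability against test families by the same pointwise argument.

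For part (3), the key point is that $M^{\wF}_{-1,2}$ is fine because $\mcR$ is a \emph{globally defined} rigid bundle on $X$ itself: the constant family $\mcR$ on $X\times\Spec\Bbbk$ is a universal object, and there is no ambiguity by line bundles because $\Pic(\Spec\Bbbk)=0$. For the remaining four spaces I would argue non-fineness. For $M^{\wF}_{0,0}\simeq\Spec\Bbbk$ and $M^{\wF}_{0,-5}\simeq\Spec\Bbbk$: the unique bundles $\mcO_X^{\oplus 2}$ and $\mcO_X(1)\oplus\mcO_X(-1)$ admit nontrivial automorphisms, so one can twist the constant family by a line bundle $\mcL_S$ on the base and get a family not isomorphic to a pullback; more precisely I would exhibit a Zariski-locally-trivial but globally nontrivial family (e.g. using $\mcL_S\in\Pic(S)$ nontrivial together with the $\GL_2$- or $\mbG_m$-action) to obstruct representability, which is the standard obstruction for rank $\geq 2$ bundles with extra automorphisms and $h^0(\mcE nd\mcE)>1$. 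The same automorphism obstruction applies to $M^{\wF}_{-1,0}$ for $\mcO_X\oplus\mcO_X(-1)$; note $\mathrm{Hom}(\mcO_X,\mcO_X(-1))=0$ but $\mathrm{Hom}(\mcO_X(-1),\mcO_X)\neq 0$, giving a nontrivial unipotent automorphism group, enough to kill fineness. For $M^{\wF}_{0,1}\simeq\P^2$: I would use that the universal extension is only defined up to twist by a line bundle on $\P^2$ pulled back, and that $\Pic(\P^2)=\Z\neq 0$, so there is a genuine obstruction — alternatively, invoke the same argument Sanna uses in \cite[Proposition~5.12]{Sanna17} for $M^{\ins}_{0,2}$, since a type-(v) bundle sits inside that moduli problem.

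For part (4), the smoothness and irreducibility of $M^{\wF}_{0,2}$ and $M^{\wF}_{0,3}$ follow from Sanna's results \cite{Sanna14,Sanna17}, as already recalled in the introduction: $M^{\ins}_{0,n}$ is smooth and irreducible for $n\in\{2,3\}$, and $M^{\wF}_{0,n}$ is an open subscheme (open because the weak Fano condition is open by \cite[Lemma~2.18]{Keeler03}), hence inherits both properties; openness in an irreducible scheme preserves irreducibility since a nonempty open subset of an irreducible space is irreducible, and one checks $M^{\wF}_{0,2}=M^{\ins}_{0,2}$ is nonempty while $M^{\wF}_{0,3}$ is nonempty by the existence statement in Theorem~\ref{mainthm-resol}. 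The non-fineness of $M^{\wF}_{0,2}$ is \cite[Proposition~5.12]{Sanna17} together with $M^{\wF}_{0,2}=M^{\ins}_{0,2}$; the fineness of $M^{\wF}_{0,3}$ is inherited from that of $M^{\ins}_{0,3}$ \cite[Remark~4.11]{Sanna17} by restricting the universal family to the open subscheme $M^{\wF}_{0,3}$. The main obstacle will be part (3): producing honest non-fineness witnesses rather than merely observing that automorphism groups are nontrivial — one must exhibit a specific flat family over a specific base (e.g. $\P^1$ or an elliptic curve) whose classifying map is constant yet which is not the pullback of any family from the point, and verifying this requires a short cohomological computation of the relevant $\mathrm{Ext}$ and $\Pic$ groups for each of the four cases.
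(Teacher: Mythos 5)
Your proposal is correct and follows essentially the same route as the paper: (1) and (4) are handled identically (uniqueness of the decomposable bundles and of $\mcR$, and inheritance of smoothness, irreducibility and (non-)fineness from Sanna's $M^{\ins}_{0,n}$ via the open immersion $M^{\wF}_{0,n}\subset M^{\ins}_{0,n}$), and for (2) the paper carries out exactly the construction you sketch, building $\tau\colon\mcM^{\wF}_{0,1}\to\Hom(-,\P^{2})$ from the relative $\mcE xt^{1}_{\pr_{2}}$-sheaf, computing $\mcL_{\P^{2}}\simeq\mcO_{\P^{2}}(-5)$ and checking $H^{2}(\P^{2},\mcL_{\P^{2}}^{-1})=0$ to produce the family over $\P^{2}$. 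For (3) the paper is in fact terser than you are: it observes that in each of the four non-fine cases every object is non-simple and concludes non-fineness from that, while fineness of $M^{\wF}_{-1,2}$ follows from stability (hence simplicity) of $\mcR$; your plan of exhibiting explicit Zariski-locally-trivial but globally non-pullback families is precisely the content behind that assertion. Two small corrections to your part (3), neither fatal since your main argument covers all four cases: the line-bundle-twist ambiguity of the universal extension over $\P^{2}$ is \emph{not} an obstruction to fineness, because the moduli functor is defined modulo twists by $\pr_{2}^{\ast}\Pic(S)$; and a type-(v) bundle does \emph{not} sit inside $M^{\ins}_{0,2}$ (it has $c_{2}=1$ and a nonzero section, so it is not stable and not an instanton bundle) --- the correct obstruction there is $\End(\mcE)\simeq\Bbbk^{\oplus 2}$, exactly as in the other non-simple cases.
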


This proposition can be proved by using known results \cite{FN89,Sanna17} and some fundamental results in the moduli theory.
Since it is slightly out of the focus of this article, the proof for this proposition is postponed until the appendix.

\subsection{Preliminaries and setting}
Our main purpose of this section is to investigate $M^{\wF}_{0,4}$ and prove Theorem~\ref{mainthm-moduli}. 
To show this theorem, we associate this moduli space with the moduli space of representations of a certain quiver. 
Notations about quivers and its representations basically follow those in \cite{King94,Reineke08}. 
Furthermore, this section works with the following additional notations. 

%In order to understand this moduli space in relation to moduli spaces of representations of quivers, we recall the notation of representations of quivers and its stability. 

%\begin{defi}\label{defi-quiver}
%Let $k$ be a field and $Q=(Q_{0},Q_{1})$ a finite quiver. 
%We let $kQ$ denote its path $k$-algebra. 
%For each vertex $v \in Q_{0}$, let $\id_{v} \in Q_{1}$ denote the trivial path associated to $v$. 
%For a right $kQ$-module $M$ and each vertex $v \in Q_{0}$, we set $M_{v}:=\id_{v}.M$. 
%Under the natural decomposition $\bigoplus_{v \in Q_{0}}M_{v} \simeq M$, the \emph{dimension vector} $\underline{\dim}(M)$ is defined to be $(\dim M_{v})_{v \in Q_{0}}$. 

%Let $\Theta \colon \Z^{Q_{0}} \to \Z$ be a linear map. 
%For a $kQ$-module $M$, 
%$M$ is \emph{$\Theta$-semistable} if 
%$\Theta(\underline{\dim}(M))=0$ and for every sub $kQ$-module $M' \subset M$, we have $\Theta(M') \leq 0$. 
%We say $M$ is \emph{$\Theta$-stable} if 
%$M$ is $\Theta$-semistable and for every sub $kQ$-module $0 \subsetneq M' \subsetneq M$, we have $\Theta(M') < 0$. 
%\end{defi}

\begin{nota}
\begin{itemize}
\item Let $\mcH$ be the cokernel of the natural map
\[0 \to \mcO_{X}(-1) \to \mcQ(-1) \otimes \Hom(\mcO_{X}(-1),\mcQ(-1))^{\vee} \simeq \mcQ(-1)^{\oplus 5} \to \mcH \to 0.\]
Note that $\mcH=\RR_{\mcQ(-1)}(\mcO_{X}(-1))[1]$ and hence $\braket{\mcO_{X}(-1),\mcQ(-1)}=\braket{\mcQ(-1),\mcH}$. 
\item Set $\mcT:=\mcQ(-1) \oplus \mcH$. 
Since $\Hom(\mcQ(-1),\mcH) \simeq \Bbbk^{\oplus 5}$, $\Hom(\mcH,\mcQ(-1))=0$, and $\mcQ(-1)$ and $\mcH$ are simple, $\End(\mcT)$ is isomorphic to the path algebra $\Bbbk Q$ of the $5$-Kronecker quiver $Q$. 
%Then it is easy to see that $\End(\mcT)$ is isomorphic to the path algebra $\Bbbk Q$ of the $5$-Kronecker quiver $Q$. 
\item Let $v_{0}$ (resp. $v_{1}$) be the vertex of $Q$ corresponding to $\mcQ(-1)$ (resp. $\mcH$). 
For a representation $M$ of $Q$ over $\Bbbk$, 
the dimension vector $\underline{\dim}(M)$ is defined to be $(\dim_{\Bbbk}M_{0},\dim_{\Bbbk}M_{1})$, 
where we identify $M$ as a right $\Bbbk Q$-module and $M_{i}:=M \cdot e_{v_{i}}$, where $e_{v_{i}}$ is the corresponding idempotent. 
%as in Definition~\ref{defi-quiver}. 
\item We define a stability function $\Theta \colon \Z^{\oplus 2} \to \Z$ as $\Theta(a,b) := b-a$. 
\end{itemize}
\end{nota}

As a preliminary, we prepare the following lemma. 

\begin{prop}\label{lem-quiver-stable}
Let $\mcE$ be a rank $2$ weak Fano bundle with $c_{1}(\mcE)=0$ and $c_{2}(\mcE)=4$. Let $\mcF:=\mcE(1)$ and $\mcK_{\mcF}:=\Ker(H^{0}(\mcF) \otimes \mcO_{X} \to \mcF)$. 
%let $\mcK_{\mcF}$ denotes $\Ker(H^{0}(\mcF) \otimes \mcO_{X} \to \mcF)$. 
%Then $\Ext^{i}(\mcT,\mcK) = 0$ for every $i>0$. 
%Moreover, $\Hom(\mcT,\mcK)$ is a $\Theta$-stable representation of $Q$ over $\Bbbk$ of dimension vector $(2,2)$. 
%right $\Bbbk Q \simeq \End(\mcT)$-module of dimension vector $(2,2)$. 
Then the following assertions hold.
\begin{enumerate}
\item $\Ext^{i}(\mcT,\mcK_{\mcF}) = 0$ for every $i>0$. Moreover, $\Hom(\mcT,\mcK_{\mcF})$ is a right $\Bbbk Q \simeq \End(\mcT)$-module of dimension vector $(2,2)$. 
\item $\Hom(\mcT,\mcK_{\mcF})$ is $\Theta$-stable. 
\end{enumerate}
\end{prop}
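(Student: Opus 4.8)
The plan is to exploit the resolution of $\mcF=\mcE(1)$ coming from Theorem~\ref{mainthm-resol}~(viii), namely
\[
0 \to \mcO(-1)^{\oplus 2} \to \mcQ(-1)^{\oplus 2} \to \mcO^{\oplus 6} \to \mcF \to 0,
\]
together with the definition $\mcH=\RR_{\mcQ(-1)}(\mcO(-1))[1]$, which gives the short exact sequence $0 \to \mcO(-1) \to \mcQ(-1)^{\oplus 5} \to \mcH \to 0$. First I would split the four-term resolution into two short exact sequences, $0 \to \mcO(-1)^{\oplus 2} \to \mcQ(-1)^{\oplus 2} \to \mcM \to 0$ and $0 \to \mcM \to \mcO^{\oplus 6} \to \mcF \to 0$, so that $\mcK_{\mcF}=\Ker(H^0(\mcF)\otimes\mcO \to \mcF)$; since $h^0(\mcF)=h^0(\mcE(1))=6$ (by Hirzebruch--Riemann--Roch and the vanishings for instanton bundles used in Lemma~\ref{lem-extgeq2}), the evaluation map $\mcO^{\oplus 6}\to\mcF$ is exactly the one in the resolution, whence $\mcK_{\mcF}\simeq\mcM$ fits into $0 \to \mcO(-1)^{\oplus 2} \to \mcQ(-1)^{\oplus 2} \to \mcK_{\mcF} \to 0$.

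For part (1), I would compute $\Ext^\bullet(\mcT,\mcK_{\mcF})=\Ext^\bullet(\mcQ(-1),\mcK_{\mcF})\oplus\Ext^\bullet(\mcH,\mcK_{\mcF})$ by applying $\RHom(\mcQ(-1),-)$ and $\RHom(\mcH,-)$ to the sequence $0 \to \mcO(-1)^{\oplus 2} \to \mcQ(-1)^{\oplus 2} \to \mcK_{\mcF} \to 0$. All the needed $\Ext$-groups between $\mcO(-1)$, $\mcQ(-1)$ and $\mcH$ are computable from the fact that $\langle\mcO(-1),\mcQ(-1),\mcR,\mcO\rangle$ is a full \emph{strong} exceptional collection (Theorem~\ref{Orlov}) and that $\langle\mcO(-1),\mcQ(-1)\rangle=\langle\mcQ(-1),\mcH\rangle$ with $(\mcQ(-1),\mcH)$ exceptional: in particular $\Ext^{\geq 1}(\mcQ(-1),\mcQ(-1))=\Ext^{\geq 1}(\mcQ(-1),\mcO(-1))=0$, $\Ext^{\bullet}(\mcH,\mcO(-1))$ is concentrated in one degree (from $\RHom(\mcH,\mcO(-1))$ via the defining sequence of $\mcH$ and $\RHom(\mcQ(-1),\mcO(-1))$), and $\Ext^{\geq 1}(\mcH,\mcQ(-1))=0$ because $\mcH\in\langle\mcQ(-1),\mcO(-1)\rangle$ with $\mcO(-1)$ on the right. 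Chasing these through the long exact sequences kills $\Ext^{>0}(\mcT,\mcK_{\mcF})$, and then $\dim_\Bbbk\Hom(\mcQ(-1),\mcK_{\mcF})$ and $\dim_\Bbbk\Hom(\mcH,\mcK_{\mcF})$ are forced to be $\chi(\mcQ(-1),\mcK_{\mcF})$ and $\chi(\mcH,\mcK_{\mcF})$, which I would evaluate as $2$ and $2$ by HRR using $\ch(\mcK_{\mcF})=2\ch(\mcQ(-1))-2\ch(\mcO(-1))$; this gives the dimension vector $(2,2)$, and the $\Bbbk Q$-module structure is automatic since $\End(\mcT)\simeq\Bbbk Q$ and $\Hom(\mcT,-)$ lands in right $\End(\mcT)$-modules.

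For part (2), I would note that $\Theta$-stability of a $(2,2)$-representation $M$ just means there is no subrepresentation of dimension vector $(1,0)$, $(2,0)$, $(1,1)$, $(0,1)$, or $(2,1)$ with $\Theta\geq 0$ (equivalently, no subobject of dimension vector $(a,b)$ with $b-a > 0$ after normalizing, i.e.\ no $(0,1)$ or $(1,2)$ sub and no $(1,0)$ or $(2,1)$ quotient being the obstruction); the effective checks reduce to: $M$ has no subrepresentation with dimension vector $(0,1)$ or $(1,2)$, and no quotient with dimension vector $(1,0)$. A subrepresentation $(0,1)\hookrightarrow M=\Hom(\mcT,\mcK_{\mcF})$ would produce a nonzero map $\mcH\to\mcK_{\mcF}$ killed by all of $\End(\mcT)$, hence a copy of $\mcH$ (or a subsheaf generated by $\mcH$) inside $\mcK_{\mcF}\subset H^0(\mcF)\otimes\mcO$; I would derive a contradiction by comparing slopes / first Chern classes and using stability of the relevant sheaves (the argument is parallel in spirit to Steps~1--2 of the proof of Lemma~\ref{lem-vani-c24}, where maps out of $\mcR$ and $\mcQ^\vee$ into an instanton bundle were excluded). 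Similarly a quotient of dimension vector $(1,0)$ corresponds to a nonzero quotient of $\mcK_{\mcF}$ on which only the $\mcQ(-1)$-part acts, forcing a subsheaf of the form (a quotient of) $\mcQ(-1)$ splitting off, which contradicts either the indecomposability/stability of $\mcE$ or the structure of the resolution $0 \to \mcO(-1)^{\oplus 2} \to \mcQ(-1)^{\oplus 2} \to \mcK_{\mcF} \to 0$ (the map $\mcO(-1)^{\oplus 2}\to\mcQ(-1)^{\oplus 2}$ being, say, sufficiently generic because $\mcE$ is a bundle). The main obstacle I anticipate is precisely this stability verification: ruling out all destabilizing sub- and quotient representations cleanly, and in particular showing that the relevant sheaf-level maps cannot exist requires care about which of them are genuinely obstructed by stability of $\mcE$, $\mcQ(-1)$ and $\mcO(-1)$ versus which need a genericity input from $\mcK_{\mcF}$ being locally free; I would organize this by translating each destabilizing dimension vector into a sheaf-theoretic statement and disposing of them one at a time.
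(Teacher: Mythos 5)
Your part (1) follows the paper's route exactly: identify $\mcK_{\mcF}$ with the kernel in the resolution of Theorem~\ref{mainthm-resol}~(viii), so that $0 \to \mcO(-1)^{\oplus 2} \to \mcQ(-1)^{\oplus 2} \to \mcK_{\mcF} \to 0$, and read off $\RHom(\mcQ(-1),\mcK_{\mcF}) \simeq \RHom(\mcH,\mcK_{\mcF}) \simeq \Bbbk^{2}$ from the known $\RHom$'s among $\mcO(-1)$, $\mcQ(-1)$, $\mcH$. That part is fine.

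Part (2) has a genuine gap: your list of ``effective checks'' (no $(0,1)$ subrepresentation, no $(1,2)$ subrepresentation, no $(1,0)$ quotient) omits subrepresentations of dimension vector $(1,1)$. Since $\Theta(1,1)=0$, a $(1,1)$ subrepresentation violates $\Theta$-\emph{stability} (though not semistability), and it is not implied by any of the cases you do treat; it is exactly the difference between the stable and the strictly semistable locus, and the strictly semistable locus of $(2,2)$-representations of the $5$-Kronecker quiver is nonempty. In the paper this is the hardest step: working in the tilted heart $\mcA=\Phi^{-1}(\mod\mhyphen\Bbbk Q)\subset\braket{\mcQ(-1),\mcH}$, whose simple objects are $\mcQ(-1)$ (dimension vector $(1,0)$) and $\mcO(-1)[1]$ (dimension vector $(0,1)$), a $(1,1)$ subobject $M$ is the cone of a map $s\colon\mcO(-1)\to\mcQ(-1)$; one shows $s\neq 0$ (else $\mcO(-1)[1]$ would embed in $\mcK_{\mcF}$, contradicting $\Ext^{-1}(\mcO(-1),\mcK_{\mcF})=0$), so $M=\Cok(s)$ is a coherent sheaf that fails to be locally free along the nonempty zero locus of $s$; the complementary quotient $N$ has the same form, and the exact sequence $0\to M\to\mcK_{\mcF}\to N\to 0$ with $\mcK_{\mcF}$ locally free then yields a contradiction (e.g.\ via ${\mcE}xt$-sheaves, since $N$ has a length-one locally free resolution while ${\mcE}xt^{1}(M,\mcO)\neq 0$). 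None of the tools you list (slope comparison, stability of $\mcE$, $\mcQ(-1)$, $\mcO(-1)$) address this case, because the $(1,1)$ subobject is not a subsheaf of $\mcK_{\mcF}$ that destabilizes it in the slope sense. Separately, your translation of a $(0,1)$ subrepresentation as ``a copy of $\mcH$ inside $\mcK_{\mcF}$'' is off: it corresponds to a nonzero $f\in\Hom(\mcH,\mcK_{\mcF})$ with $f\circ\alpha=0$ for all $\alpha\in\Hom(\mcQ(-1),\mcH)$ (equivalently, a copy of the simple $\mcO(-1)[1]$ in the heart), which dies immediately because $\mcQ(-1)^{\oplus 5}\to\mcH$ is surjective; this is fixable, but the missing $(1,1)$ case is not.
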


\begin{proof}
(1) It suffices to show that
$\Ext^{i}(\mcQ(-1),\mcK_{\mcF}) \simeq \Ext^{i}(\mcH,\mcK_{\mcF}) \simeq 
\left\{
\begin{array}{ll}
\Bbbk^{2} & (i=0) \\
0 & (i \neq 0)
\end{array}
\right.
$. 
This directly follows from the exact sequence 
\begin{align}\label{ex-K-4}
0 \to \mcO_{X}(-1)^{\oplus 2} \to \mcQ(-1)^{\oplus 2} \to \mcK_{\mcF} \to 0
\end{align}
which is obtained in Theorem~\ref{mainthm-resol}~(viii). 

(2) 
Since $\braket{\mcQ(-1),\mcH}$ is a strong exceptional collection, 
the functor 
\begin{align}\label{eq-equiv}
\Phi \colon \braket{\mcQ(-1),\mcH} \ni \mcF \to \RHom(\mcT,\mcF) \in \Db(\mod{\mhyphen}\Bbbk Q) 
\end{align}
gives a category equivalence. 
Note that the heart $\{M^{\bullet} \in \Db(\mod{\mhyphen}\Bbbk Q) \mid \mcH^{\neq 0}(M^{\bullet})=0\}$ of the standard $t$-structure of $\Db(\mod{\mhyphen}\Bbbk Q)$ is a full abelian subcategory. 
Let $\mcA \subset \braket{\mcQ(-1),\mcH}$ be the corresponding abelian subcategory under $\Phi$. 
Then $\mcA=\{\mcF^{\bullet} \in \braket{\mcQ(-1),\mcH} \mid \Ext^{\neq 0}(\mcT,\mcF^{\bullet})=0\}$. 
For each object $\mcF^{\bullet} \in \mcA$, 
we define its dimension vector as 
$\underline{\dim}(\mcF^{\bullet})
:=\underline{\dim}(\Phi(\mcF^{\bullet}))
=(\dim \Hom(\mcQ(-1),\mcF^{\bullet}),\dim \Hom(\mcH,\mcF^{\bullet}))$. 

By (1), $\mcK_{\mcF} \in \braket{\mcQ(-1),\mcH}$ is an object of $\mcA$. 
Therefore, to show the $\Theta$-stability, 
it suffices to show the inequality 
$\Theta(M)=\dim \Hom(\mcH,M) - \dim \Hom(\mcQ(-1),M) < 0$ for every subobject $0 \neq M \subsetneq \mcK_{\mcF}$ in $\mcA$. 

Since $\RHom(\mcT,\mcQ(-1))=\Bbbk$ and $\RHom(\mcT,\mcO_{X}(-1))=\Bbbk[-1]$, 
$\mcQ(-1)$ and $\mcO_{X}(-1)[1]$ are objects in $\mcA$. 
Taking a shift of the exact sequence (\ref{ex-K-4}) gives an exact sequence 
\begin{align}\label{ex-K-4-inA}
0 \to \mcQ(-1)^{\oplus 2} \to \mcK_{\mcF} \to \mcO_{X}(-1)[1]^{\oplus 2} \to 0
\end{align}
in $\mcA$. Let $0 \neq M \subsetneq \mcK_{\mcF}$ be a subobject in $\mcA$ and $N:=\mcK_{\mcF}/M \in \mcA$. 
Since $M,N \in \mcA \subset \braket{\mcQ(-1),\mcH}=\braket{\mcO_{X}(-1)[1],\mcH}$, 
Lemma~\ref{lem-2term} gives two distinguished triangles: 
\begin{align*}
&\RHom(\mcQ(-1),M) \otimes \mcQ(-1) \to M \to \RHom(M,\mcO_{X}(-1))^{\vee} \otimes \mcO_{X}(-1) \mathop{\to}^{+1} \text{ and } \\
&\RHom(\mcQ(-1),N) \otimes \mcQ(-1) \to N \to \RHom(N,\mcO_{X}(-1))^{\vee} \otimes \mcO_{X}(-1) \mathop{\to}^{+1}.
\end{align*}
Since 
$\Ext^{\neq 0}(\mcQ(-1),M)
=\Ext^{\neq 0}(\mcQ(-1),N)
=0$ and $\mcO_{X}(-1)[i] \in \mcA$ if and only if $i = 1$, 
there exist $a,b,c,d \in \Z_{\geq 0}$ such that 
$\RHom(\mcQ(-1),M) \simeq \Bbbk^{\oplus a}$, 
$\RHom(M,\mcO_{X}(-1))^{\vee} \simeq \Bbbk^{\oplus b}[1]$, 
$\RHom(\mcQ(-1),N) \simeq \Bbbk^{\oplus c}$, and
$\RHom(N,\mcO_{X}(-1))^{\vee} \simeq \Bbbk^{\oplus d}[1]$, 
and the following diagram in $\mcA$: 
\[\xymatrix{
&0\ar[d]&0\ar[d]&0\ar[d]& \\
0\ar[r]&\mcQ(-1)^{\oplus a} \ar[r] \ar[d]& \mcQ(-1)^{\oplus 2} \ar[r] \ar[d]& \mcQ(-1)^{\oplus c} \ar[r] \ar[d]& 0 \\
0\ar[r]&M \ar[r] \ar[d]& \mcK_{\mcF} \ar[r] \ar[d]& N \ar[r] \ar[d]& 0 \\
0\ar[r]&\mcO_{X}(-1)[1]^{\oplus b} \ar[r] \ar[d] & \mcO_{X}(-1)[1]^{\oplus 2} \ar[r] \ar[d] & \mcO_{X}(-1)[1]^{\oplus d} \ar[r] \ar[d]& 0 \\
&0&0&0.& 
}\]
Thus $c=2-a$ and $d=2-b$. 
Since $\underline{\dim}(\mcQ(-1))=(1,0)$ and $\underline{\dim}(\mcO_{X}(-1)[1])=(0,1)$, 
we have $\underline{\dim}(M)=(a,b)$ and $\Theta(M)=b-a$. 
It suffices to show $\Theta(M)<0$. 

First, it follows from the equality $\Hom_{\mcA}(\mcO_{X}(-1)[1],\mcK_{\mcF}) = \Ext^{-1}_{\Db(X)}(\mcO_{X}(-1),\mcK_{\mcF})=0$ that $a \neq 0$. 
%$\Hom_{\mcA}(\mcO_{X}(-1)[1],\mcK_{\mcF}) = \Ext^{-1}_{\Db(X)}(\mcO_{X}(-1),\mcK_{\mcF})=0$ implies that $a \neq 0$. 
Next, we show $d \neq 0$. 
If $d=0$, then there is a surjection $\mcK_{\mcF} \epm N \simeq \mcQ(-1)^{\oplus c}$. 
However, if there is a surjection $\mcK_{\mcF} \epm \mcQ(-1)$, then the kernel is a line bundle, which must be isomorphic to $\mcO_{X}$, which contradicts that $h^{0}(\mcK_{\mcF})=0$. 
Hence $c=0$, which implies $N=0$. 
This contradicts our assumption $M \subsetneq \mcK_{\mcF}$. 

Hence it holds that $a \neq 0$ and $d \neq 0$, which implies $a \in \{1,2\}$ and $b \in \{0,1\}$. 
To show $\Theta(M) < 0$, it suffices to show that the case $a=b=1$ does not occur. 
If $a=b=1$, then there is an exact sequence 
$\displaystyle 
0 \to \mcQ(-1) \to M \to \mcO_{X}(-1)[1] \to 0$
in $\mcA$. 
Let $s \in \Ext^{1}(\mcO_{X}(-1)[1],\mcQ(-1)) \simeq \Hom_{\Db(X)}(\mcO_{X}(-1),\mcQ(-1))$ be the corresponding extension class. 
Then the mapping cone of $s$ is isomorphic to $M$. 
If $s=0$, then $M \simeq \mcO_{X}(-1)[1] \oplus \mcQ(-1)$, which contradicts that $\Hom(\mcO_{X}(-1)[1],\mcK_{\mcF})=0$. 
%Claim~\ref{claim-Kvani}. 
Hence $s \neq 0$ and the mapping cone of $s$ is isomorphic to the cokernel of $s$ in $\Coh(X)$.
In particular, $M$ is a coherent sheaf, which is not locally free since $(s=0) \neq \emp$, i.e.~the bundle $\mcQ$ does not have nowhere vanishing section. 
Since $c=d=1$ also holds, it follows from the exactly same argument that $N$ is a coherent sheaf which is not locally free and fits in an exact sequence
\[ 0 \to \mcO_X(-1) \to \mcQ(-1) \to N \to 0 \]
in the coherent heart $\Coh(X)$.
However the argument using depth applied to an exact sequence $0 \to M \to \mcK_{\mcF} \to N \to 0$, where $\mcK_{\mcF}$ is locally free, implies that $M$ must be locally free. 
%Since {\mcE}xt^{1}(M,\mcO_{X}) must vanish. 
This is a contradiction, which completes the proof of (2). 
%(3) By the equivalence $\Phi$, it suffices to show $\mcK_{\mcF_{1}} \simeq \mcK_{\mcF_{2}}$ if and only if $\mcF_{1} \simeq \mcF_{2}$. 
\end{proof}

\subsection{Irreducibility of $M^{\wF}_{0,4}$.}

We define the moduli functor $\mcM^{\Theta\text{-st}}_{(2,2)}(Q) \colon (\Sch/\Bbbk)^{\op} \to (\Sets)$ as follows: 
\[\mcM^{\Theta\text{-st}}_{(2,2)}(Q)(S):=
\left\{ 
\begin{array}{l}
\mcV \text{: a locally free}  \\
\text{sheaf on } S 
\end{array}
\left|
\begin{array}{l}
\mcV \text{ is a family of $\Bbbk Q$-modules } \\
\text{over $S$ with } \underline{\dim}(\mcV)=(2,2), \\
%\text{There is an action } \Bbbk Q \to \End_{\mcO_{S}}(\mcV), \text{ and } \\
\forall s \in S, \mcV \otimes \Bbbk(s) \text{ is a geometrically} \\
\Theta \text{-stable representation of } Q. 
\end{array}
\right\}
\right./\sim
,
\]
where $\mcV \sim \mcV' \iff \exists \mcL \in \Pic(S)$ such that $\mcV' \simeq \mcV \otimes \mcL$. 
For the definition of families of $\Bbbk Q$-modules over $S$ and its dimension vectors, we refer to \cite{King94}.
% and \cite{HS20}. 
In the following proposition, 
we see a relationship between the moduli functors
$\mcM^{\wF}_{0,4}$ and $\mcM^{\Theta\text{-st}}_{(2,2)}(Q)$. 
%\begin{claim}
%Let $S$ be a noetherian scheme and $\mcF$ a coherent sheaf on $X \times S$. 
%Suppose that $H^{i}(X_{s},\mcF_{s})=0$ for every $s \in S$ and $i > 0$. 
%Then ${\pr_{2}}_{\ast}\mcF$ is a locally free sheaf. 
%\end{claim}
%\begin{proof}
%Hartshorne, III, Proof of Theorem~9.9.
%\end{proof}
\begin{prop}\label{prop-moduli-map}
For $\mcE \in \mcM^{\wF}_{0,4}(S)$, we set $\mcF:=\mcE \otimes \pr_{1}^{\ast}\mcO_{X}(1)$ and define 
\begin{align}\label{ex-fam-1}
\mcK_{\mcF}:=\Ker (\pr_{2}^{\ast}{\pr_{2}}_{\ast}\mcF \to \mcF).
\end{align}
Since $\mcF$ and ${\pr_{2}}_{\ast}\mcF$ are locally free
, so is $\mcK_{\mcF}$. 
%For the freeness of $\mcF$, see Lemma~2.1.7 [HL]. 
Note that $\mcK_{\mcF}$ is also flat over $S$. 

Then the following assertions hold. 
\begin{enumerate}
\item ${\mcE}xt_{\pr_{2}}^{i}(\pr_{1}^{\ast}\mcQ(-1),\mcK_{\mcF})={\mcE}xt_{\pr_{2}}^{i}(\pr_{1}^{\ast}\mcH,\mcK_{\mcF})=0$ for $i>0$. 
Moreover, the coherent sheaves $\mcV_{1}:={\pr_{2}}_{\ast}{\mcH}om(\pr_{1}^{\ast}\mcQ(-1),\mcK_{\mcF})$ and $\mcV_{2}:={\pr_{2}}_{\ast}{\mcH}om(\pr_{1}^{\ast}\mcH,\mcK_{\mcF})$ are locally free sheaves on $S$ of rank $2$. 
In particular, $\mcV:=\mcV_{1} \oplus \mcV_{2}$ is a family of $\Bbbk Q$-module. 
\item $\mcV_{1} \oplus \mcV_{2}$ is a geometrically $\Theta$-stable representation of the quiver $Q$ of dimension vector $(2,2)$. 
%The coherent sheaf $\mcV:=\mcHom_{\pr_{2}}(\pr_{1}^{\ast}\mcT,\wt{\mcK_{\mcF}})$ is locally free which admits an action of $A=\End(\mcT)$. 
\end{enumerate}
In particular, the morphism of functors 
\[F(S) \colon \mcM^{\wF}_{0,4}(S) \ni \mcE \mapsto {\pr_{2}}_{\ast}{\mcH}om(\pr_{1}^{\ast}\mcT,\mcK_{\mcE(1)}) = \mcV_{1} \oplus \mcV_{2} \in \mcM^{\Theta\text{-st}}_{(2,2)}(Q) (S)\]
is well-defined. 
\end{prop}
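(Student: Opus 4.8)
The plan is to obtain every assertion from the fibrewise statements in Proposition~\ref{lem-quiver-stable} by means of cohomology and base change for the projection $\pr_{2}\colon X\times S\to S$. First I would make the relative picture compatible with fibres. For each geometric point $\bar s$ of $S$, the bundle $\mcE_{\bar s}$ is a weak Fano bundle with $c_{1}=0$, $c_{2}=4$, hence an instanton bundle by \cite[Corollary~4.7]{FHI20}; moreover $\mcF_{\bar s}=\mcE_{\bar s}(1)$ is globally generated by \cite[Theorem~1.7]{FHI20} and $\RG(X_{\bar s},\mcF_{\bar s})\simeq\Bbbk^{6}$, so $h^{0}(\mcF_{\bar s})=6$ and $h^{i}(\mcF_{\bar s})=0$ for $i\ge 1$, independently of $\bar s$. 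By cohomology and base change ${\pr_{2}}_{\ast}\mcF$ is then locally free of rank $6$, its formation commutes with base change, and the higher direct images vanish. Consequently the evaluation $\e\colon\pr_{2}^{\ast}{\pr_{2}}_{\ast}\mcF\to\mcF$ is fibrewise surjective, hence surjective, and in $0\to\mcK_{\mcF}\to\pr_{2}^{\ast}{\pr_{2}}_{\ast}\mcF\to\mcF\to 0$ the two right-hand terms are flat over $S$; therefore $\mcK_{\mcF}$ is flat over $S$, the sequence remains exact after any base change $T\to S$, and in particular $(\mcK_{\mcF})_{\bar s}=\Ker\bigl(H^{0}(\mcF_{\bar s})\otimes\mcO_{X_{\bar s}}\to\mcF_{\bar s}\bigr)=\mcK_{\mcF_{\bar s}}$ in the notation of Proposition~\ref{lem-quiver-stable}.

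For (1) I would observe that, $\mcQ(-1)$ and $\mcH$ being locally free, the sheaves ${\mcE}xt^{i}_{\pr_{2}}(\pr_{1}^{\ast}\mcQ(-1),\mcK_{\mcF})$ and ${\mcE}xt^{i}_{\pr_{2}}(\pr_{1}^{\ast}\mcH,\mcK_{\mcF})$ are the higher direct images $R^{i}{\pr_{2}}_{\ast}\bigl(\pr_{1}^{\ast}\mcT^{\vee}\otimes\mcK_{\mcF}\bigr)$, split according to the two summands of $\mcT$. The restriction of $\pr_{1}^{\ast}\mcT^{\vee}\otimes\mcK_{\mcF}$ to the fibre $X_{\bar s}$ is $\mcT^{\vee}\otimes\mcK_{\mcF_{\bar s}}$, whose cohomology is $\Ext^{\bullet}_{X_{\bar s}}(\mcT,\mcK_{\mcF_{\bar s}})$; by Proposition~\ref{lem-quiver-stable}~(1) this vanishes in positive degrees and in degree $0$ has dimension vector $(2,2)$ over the two summands. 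Cohomology and base change then yields the vanishing of the relative $\Ext$ sheaves in positive degree and shows that $\mcV_{1}={\mcH}om_{\pr_{2}}(\pr_{1}^{\ast}\mcQ(-1),\mcK_{\mcF})$ and $\mcV_{2}={\mcH}om_{\pr_{2}}(\pr_{1}^{\ast}\mcH,\mcK_{\mcF})$ are locally free of rank $2$ with formation commuting with base change, the fibres being $\Hom_{X_{\bar s}}(\mcQ(-1),\mcK_{\mcF_{\bar s}})$ and $\Hom_{X_{\bar s}}(\mcH,\mcK_{\mcF_{\bar s}})$. Precomposition makes $\End(\mcT)\simeq\Bbbk Q$ act on $\mcV_{1}\oplus\mcV_{2}={\mcH}om_{\pr_{2}}(\pr_{1}^{\ast}\mcT,\mcK_{\mcF})$, with the two trivial paths projecting onto $\mcV_{1}$ and $\mcV_{2}$, so $\mcV:=\mcV_{1}\oplus\mcV_{2}$ is a family of $\Bbbk Q$-modules over $S$ of dimension vector $(2,2)$. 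For (2), the base-change compatibility just established identifies $\mcV\otimes\ol{\Bbbk(s)}$, as a $\Bbbk Q$-module, with $\Hom_{X_{\bar s}}(\mcT,\mcK_{\mcF_{\bar s}})$, which is $\Theta$-stable by Proposition~\ref{lem-quiver-stable}~(2); hence $\mcV$ is a geometrically $\Theta$-stable representation of $Q$ of dimension vector $(2,2)$.

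Finally, to conclude that $F$ is a well-defined natural transformation $\mcM^{\wF}_{0,4}\to\mcM^{\Theta\text{-st}}_{(2,2)}(Q)$ I would note that all the intermediate constructions ($\mcF$, ${\pr_{2}}_{\ast}\mcF$, $\mcK_{\mcF}$, $\mcV_{1}$, $\mcV_{2}$) commute with an arbitrary base change $T\to S$ by the cohomology-and-base-change statements used above, and that replacing $\mcE$ by $\mcE\otimes\pr_{2}^{\ast}\mcL$ with $\mcL\in\Pic(S)$ tensors ${\pr_{2}}_{\ast}\mcF$, hence $\mcK_{\mcF}$, hence each $\mcV_{i}$, by $\mcL$ via the projection formula; thus $F$ respects the equivalence relations defining the two functors. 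The point demanding the most care is the first paragraph: one must verify that $\mcF_{\bar s}$ has vanishing higher cohomology and constant $h^{0}$, so that ${\pr_{2}}_{\ast}\mcF$ is well behaved, and then propagate flatness and base-change-exactness through the defining sequence of $\mcK_{\mcF}$, so that its geometric fibres are literally the sheaves $\mcK_{\mcF_{\bar s}}$ to which Proposition~\ref{lem-quiver-stable} applies; once this identification is secured, (1) and (2) are formal.
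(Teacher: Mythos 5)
Your proposal is correct and follows essentially the same route as the paper: both deduce (1) and (2) from the fibrewise statements of Proposition~\ref{lem-quiver-stable} via Cohomology and Base Change applied to $\pr_{2}$. Your write-up merely makes explicit several points the paper leaves implicit (local freeness of ${\pr_{2}}_{\ast}\mcF$, the identification of the fibres of $\mcK_{\mcF}$ with $\mcK_{\mcF_{\bar s}}$, and compatibility with the equivalence relation under twisting by $\pr_{2}^{\ast}\mcL$), all of which are sound.
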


\begin{proof}
Since $\mcK_{\mcF}$ is locally free, it holds that ${\mcE}xt_{\pr_{2}}^{i}(\pr_{1}^{\ast}\mcE_{X},\mcK_{\mcF})
\simeq R^{i}{\pr_{2}}_{\ast}{\mcH}om(\pr_{1}^{\ast}\mcE_{X},\mcK_{\mcF})$ for every locally free sheaf $\mcE_{X}$ on $X$. 
%For every scheme-theoretic point $s \in S$, 
%let 
%\[\vp_{s}^{i} \colon R^{i}{\pr_{2}}_{\ast}(\mcH{om}(\pr_{1}^{\ast}\mcE_{X},\mcK_{\mcF})) \otimes \Bbbk(s) \to H^{i}({\mcH}om(\mcE_{X_{s}},\mcK_{\mcF}|_{X_{s}})) = \Ext^{i}(\mcE_{X_{s}},\mcK_{\mcF}|_{X_{s}}).\]
%be the natural map. 
Then the Cohomology and Base Change theorem and Proposition~\ref{lem-quiver-stable}~(1) shows 
${\mcE}xt_{\pr_{2}}^{i}(\pr_{1}^{\ast}\mcQ(-1),\mcK_{\mcF})={\mcE}xt_{\pr_{2}}^{i}(\pr_{1}^{\ast}\mcH,\mcK_{\mcF})=0$ for $i>0$, and the locally freeness of $\mcV_{1}$ and $\mcV_{2}$, which shows (1). 
(2) follows from Proposition~\ref{lem-quiver-stable}~(2). 
\end{proof}

Now we have the following proposition, which is a half part of Theorem~\ref{mainthm-moduli}. 

\begin{prop}\label{prop-irred}
The coarse moduli space $M_{0,4}^{\wF}$ of $\mcM_{0,4}^{\wF}$ is a smooth variety of dimension $13$. 
\end{prop}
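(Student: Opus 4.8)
The plan is to identify $M^{\wF}_{0,4}$ with an open subscheme of the moduli space $M^{\Theta\text{-st}}_{(2,2)}(Q)$ of $\Theta$-stable representations of the $5$-Kronecker quiver $Q$ of dimension vector $(2,2)$, through the morphism induced by the functor $F$ of Proposition~\ref{prop-moduli-map}, and then to read off smoothness and dimension from King's GIT description of quiver moduli. Recall that $M^{\Theta\text{-st}}_{(2,2)}(Q)$ is a geometric quotient of the $\Theta$-stable locus of the representation space $R(Q,(2,2))=\Hom(\Bbbk^{2},\Bbbk^{2})^{\oplus 5}$, an affine space, by the action of $\GL_{2}\times\GL_{2}$, which acts freely modulo diagonal scalars on the stable locus; hence it is a smooth irreducible quasi-projective variety of dimension $\dim R(Q,(2,2))-(\dim(\GL_{2}\times\GL_{2})-1)=20-7=13$, equivalently $1-\langle(2,2),(2,2)\rangle_{Q}=1-(-12)=13$ where $\langle-,-\rangle_{Q}$ is the Euler form of $Q$.

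First I would construct an inverse to $F$ on points. Given a $\Theta$-stable representation $\mcV$ of dimension vector $(2,2)$, the equivalence $\Phi$ of (\ref{eq-equiv}) together with the identification of hearts in the proof of Proposition~\ref{lem-quiver-stable} produces an object $\mcK:=\Phi^{-1}(\mcV)$ in the abelian category $\mcA\subset\braket{\mcQ(-1),\mcH}=\braket{\mcO(-1),\mcQ(-1)}$, whose class in $K_{0}(X)$, hence whose Chern character, is pinned down by $\underline{\dim}(\mcV)$. Since $\mcO\in{}^{\bot}\braket{\mcO(-1),\mcQ(-1)}=\braket{\mcR,\mcO}$ by Remark~\ref{rem-cat}, one has $\RG(\mcK)=\RHom(\mcO,\mcK)=0$ for every such $\mcK$, and a short Euler-characteristic computation gives $\hom(\mcK,\mcO)=6$ whenever $\RHom(\mcK,\mcO)$ is concentrated in degree $0$. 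I would then restrict to the locus of $\mcV$ for which $\mcK$ is a locally free sheaf in degree $0$ (it need not be: $\mcO(-1)[1]$, as well as non-locally-free coherent sheaves, also lie in $\mcA$), $\RHom(\mcK,\mcO)$ is concentrated in degree $0$, and the coevaluation $\mcK\to\RHom(\mcK,\mcO)^{\vee}\otimes\mcO\simeq\mcO^{\oplus 6}$ is injective with locally free cokernel $\mcF$ of rank $2$ such that $\mcE:=\mcF(-1)$ is weak Fano --- each of these being an \emph{open} condition, the last by \cite[Lemma~2.18]{Keeler03}, as used already in defining $\mcM^{\wF}_{c_{1},c_{2}}$. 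Then $\mcE$ is automatically a rank $2$ weak Fano bundle with $c_{1}(\mcE)=0$ and $c_{2}(\mcE)=4$, the Chern classes being forced by $\mathrm{ch}(\mcK)$. Conversely I would verify that, for such an $\mcE$ with $\mcF:=\mcE(1)$, the resolution (\ref{ex-K-4}) places $\mcK_{\mcF}=\Ker(H^{0}(\mcF)\otimes\mcO\to\mcF)$ in $\braket{\mcO(-1),\mcQ(-1)}$, and that applying $\Hom(-,\mcO)$ to $0\to\mcK_{\mcF}\to H^{0}(\mcF)\otimes\mcO\to\mcF\to0$ together with $\Hom(\mcF,\mcO)=H^{0}(\mcE(-1))=0$ and $\Ext^{1}(\mcF,\mcO)=H^{1}(\mcE(-1))=0$ (the instanton condition) identifies $\RHom(\mcK_{\mcF},\mcO)$ with $H^{0}(\mcF)^{\vee}$ and the coevaluation of $\mcK_{\mcF}$ with the inclusion $\mcK_{\mcF}\hookrightarrow H^{0}(\mcF)\otimes\mcO$, whose cokernel is $\mcF$. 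The two constructions are then mutually inverse on points, and $F(\mcE)=\Hom(\mcT,\mcK_{\mcF})\simeq\mcV$.

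Next I would globalize this. Using the relative form over $X\times S$ (relative over $S$) of the semiorthogonal decomposition $\braket{\mcO(-1),\mcQ(-1),\mcR,\mcO}$ and of $\Phi$, the conditions isolated above become open over the universal representation --- worked out \'etale-locally on $M^{\Theta\text{-st}}_{(2,2)}(Q)$, since $\gcd(2,2)\neq1$ may preclude a global one --- and cut out an open subscheme $U$, over which the reconstruction produces a family lying in $\mcM^{\wF}_{0,4}(U)$. This exhibits $F$ as an isomorphism between $\mcM^{\wF}_{0,4}$ and the open subfunctor of $\mcM^{\Theta\text{-st}}_{(2,2)}(Q)$ supported on $U$; since the latter is coarsely represented by $U$, so is $\mcM^{\wF}_{0,4}$, whence $M^{\wF}_{0,4}\simeq U$. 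As $U$ is nonempty (examples exist by Theorem~\ref{mainthm-resol}) and open in the smooth irreducible $13$-dimensional variety $M^{\Theta\text{-st}}_{(2,2)}(Q)$, we conclude that $M^{\wF}_{0,4}$ is a smooth irreducible quasi-projective variety of dimension $13$.

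The step I expect to be the main obstacle is this globalization: one must set up the relative mutation and semiorthogonal-decomposition machinery and establish the base-change statements it needs --- in particular that $\RHom_{\pr_{2}}(\mcK,\mcO_{X\times S})$ is locally free of rank $6$ and compatible with base change over the relevant locus, and that ``$\Phi^{-1}(\mcV)$ is a locally free sheaf in degree $0$'' defines an open subscheme of the base --- and one must deal with the possible absence of a global universal representation on $M^{\Theta\text{-st}}_{(2,2)}(Q)$ by descent from an \'etale cover or the moduli stack.
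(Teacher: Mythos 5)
Your route is genuinely different from the paper's, and the difference matters: the step you yourself flag as ``the main obstacle'' is precisely the step the paper's strategy is designed not to need. You propose to prove that $F$ is an isomorphism of functors onto an open subfunctor of $\mcM^{\Theta\text{-st}}_{(2,2)}(Q)$, which forces you to (a) build a relative inverse to $\Phi$ over an arbitrary base $S$ (or over an \'etale cover of $M^{\Theta\text{-st}}_{(2,2)}(Q)$, since no universal representation exists there), (b) prove that the condition ``$\Phi^{-1}(\mcV_{s})$ is a locally free sheaf placed in degree $0$ whose coevaluation into $\mcO^{\oplus 6}$ has locally free rank-$2$ cokernel'' cuts out an open subscheme, together with the base-change statements this requires, and (c) check that the two natural transformations are mutually inverse as transformations of functors, not merely on $\Bbbk$-points. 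None of this is false --- your pointwise reconstruction is correct, and the dimension count $20-7=13$ for the quiver moduli is right --- but as written it is a program rather than a proof, and (a)--(c) carry essentially all of the content: until they are done you have no access to smoothness or to the dimension of $M^{\wF}_{0,4}$, because in your scheme both are read off from the asserted open immersion.

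The paper divides the labor so that only pointwise information about $F$ is ever needed. It first obtains smoothness and $\dim M^{\wF}_{0,4}=13$ directly from deformation theory of stable sheaves: for $\mcF=\mcE(1)$ a general section vanishes on an elliptic curve $C$ of degree $9$, the classification of weak Fano threefolds with divisorial crepant contractions \cite{JPR05} forces the crepant contraction of $\Bl_{C}X$ to be small, hence $\mcN_{C/X}$ has no trivial quotient and $H^{1}(\mcN_{C/X})=0$, and a short cohomology chase through $0\to\mcI_{C}\otimes\mcF\to\mcF\to\mcN_{C/X}\to0$ gives $\dim\Ext^{1}(\mcF,\mcF)=13$ and $\Ext^{2}(\mcF,\mcF)=0$. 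Once both $M^{\wF}_{0,4}$ and $M^{\Theta\text{-st}}_{(2,2)}(Q)$ are known to be smooth of dimension $13$, the morphism $f$ induced by Proposition~\ref{prop-moduli-map} is an open immersion as soon as it is injective on closed points --- and that injectivity is exactly your pointwise argument via the equivalence $\Phi$ of (\ref{eq-equiv}); irreducibility then follows from that of the quiver moduli. So either supply the relative reconstruction and openness statements in full, or import the paper's $\Ext$-computation, after which the pointwise part of your argument already suffices.
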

\begin{proof}
%Let $\mcE$ be a rank $2$ weak Fano bundle with $c_{1}(\mcE)=0$ and $c_{2}(\mcE)=4$ and $\mcF:=\mcE(1)$. 
%Note that $\mcF$ (and so is $\mcE$) is stable. 
%Moreover, being weak Fano for rank $2$ stable bundles $\mcE$ with $c_{1}(\mcE)=0$ is characterized by the global generation of $\mcE(1)$ \cite[Theorem 1.7]{FHI20}. 
Let $\mcM_{0,4}$ be the moduli functor of Gieseker-semistable rank $2$ coherent sheaves with $c_{1}=0$ and $c_{2}=4$ on $X$. 
By Propositions~\ref{prop-Ishikawa} and \ref{prop-FHI} and Remark~\ref{rem-st}, 
for a rank $2$ vector bundle $\mcE$ with $c_{1}(\mcE)=0$ and $c_{2}(\mcE)=4$, $\mcE$ is weak Fano if and only if $\mcE$ is stable and $\mcE(1)$ is globally generated.
%Moreover, for a rank $2$ vector bundle $\mcE$ with $c_{1}(\mcE)=0$ and $c_{2}(\mcE)=4$, $\mcE$ is weak Fano if and only if $\mcE$ is stable and $\mcE(1)$ is globally generated by [?]. 
Hence the natural map $\mcM_{0,4}^{\wF} \to \mcM_{0,4}$ is well-defined and an open immersion. 
Thus there exists the coarse moduli space $M:=M_{0,4}^{\wF}$ as an open subscheme of the coarse moduli space $M_{0,4}$ of $\mcM_{0,4}$, which is constructed by \cite[Theorem 4.3.4]{HL10}.

%if $\mcE$ is simple, then $\mcE$ is slope stable. 
%To show this fact, we may assume that $c_{1}(\mcE)\in\{0,-1\}$. 
%If $\mcE$ is not slope stable, then $h^{0}(\mcE)>0$ and $h^{0}(\mcE^{\vee})>0$. 
%For non-zero sections $t \in \Hom(\mcE,\mcO)$ and $s \in \Hom(\mcO,\mcE)$, the composition $s \circ t \colon \mcE \to \mcE$ is non-trivial endomorphism. 
%Hence $\mcE$ is not simple, which is a contradiction. 

To show the smoothness of $M$ and $\dim M=13$, 
it suffices to check that $\dim \Ext^{1}(\mcF,\mcF)=13$ and $\Ext^{2}(\mcF,\mcF)=0$ for each $[\mcF] \in M(\Bbbk)$ \cite[Corollary~4.5.2]{HL10}. 
Let $s \in H^{0}(\mcF)$ be a general section. 
Then $C=(s=0)$ is an elliptic curve on $X$ of degree $9$. 
There is an exact sequence $0 \to \mcO_{X} \to \mcF \to \mcI_{C/X}(-K_{X}) \to 0$ which gives 
\[0 \to \mcF^{\vee} \to \mcF \otimes \mcF^{\vee} \to \mcF^{\vee} \otimes \mcI_{C/X}(2) \to 0.\]
Since $\RG (X,\mcF^{\vee})=\RG(X,\mcE(-1))=0$ by Proposition~\ref{prop-FHI}, 
it holds that $\RHom(\mcF,\mcF) \simeq \RG(\mcF^{\vee} \otimes \mcI_{C/X}(2)) \simeq \RG(\mcF \otimes \mcI_{C/X})$, 
where we identify $\mcF \simeq \mcF^{\vee}(2)$ given by the perfect pairing $\mcF \otimes \mcF \to \det \mcF \simeq \mcO_{X}(2)$.
%In particular, we have 
%$H^{0}(\mcF \otimes \mcI_{C/X}) =\Bbbk$.
Consider 
\[0 \to \mcI_{C/X} \otimes \mcF \to \mcF \to \mcF|_{C}  \to 0.\]
Note that $\mcF|_{C} \simeq \mcN_{C/X}$. 
Moreover, by the classification of weak Fano $3$-folds of Picard rank $2$ whose crepant contraction is divisorial \cite{JPR05}, it is known that the crepant contraction from the weak Fano $3$-fold $\Bl_{C}X$ is small. 
Thus $\mcN_{C/X} \simeq \mcN_{C/X}^{\vee} \otimes \mcO_{X}(-K_{X})|_{C}$ has no trivial line bundle as its quotient. 
Then the Serre duality gives $H^{1}(\mcN_{C/X})=H^{0}(\mcN_{C/X}^{\vee})^{\vee}=0$ and 
the Riemann-Roch theorem gives 
$\RG (\mcF|_{C}) \simeq \RG(\mcN_{C/X}) \simeq \Bbbk^{\oplus 18}$. 
Note that $\RG(\mcF) \simeq \Bbbk^{\oplus 6}$ by Proposition~\ref{prop-Ishikawa}~(1) and (2). 
Since $H^{0}(\mcF \otimes \mcI_{C/X}) \simeq \Hom(\mcF,\mcF)=\Bbbk$, 
we have $\dim \Ext^{1}(\mcF,\mcF)=h^{1}(\mcF \otimes \mcI_{C/X})=13$ and $\dim \Ext^{2}(\mcF,\mcF)=h^{2}(\mcF \otimes \mcI_{C/X})=0$.

On the other hand, It was known that the moduli functor $\mcM^{\Theta\text{-st}}_{(2,2)}(Q)$ is corepresented by a smooth irreducible variety $M':=M^{\Theta\text{-st}}_{(2,2)}(Q)$ 
(\cite[Section~3.5]{Reineke08}, \cite[Proposition~5.2]{King94}). 
Let $f \colon M \to M'$ be the morphism induced by the functor in Proposition~\ref{prop-moduli-map}. 
Now it suffices to show that $f$ is an open immersion. 
Since $M$ and $M'$ are smooth and $\dim M=\dim M'=13$, 
it is enough to check that $f(\Bbbk) \colon M(\Bbbk)=\mcM(\Bbbk) \to M'(\Bbbk)=\mcM'(\Bbbk)$ is injective. 
Let $\mcE_{1},\mcE_{2}$ be rank $2$ weak Fano bundles with $c_{1}(\mcE_{i})=0$ and $c_{2}(\mcE_{i})=4$. 
Set $\mcF_{i}:=\mcE_{i}(1)$ for each $i$. 
It is enough to show that 
$\mcF_{1} \simeq \mcF_{2}$ if and only if 
$\Hom(\mcT,\mcK_{\mcF_{1}}) \simeq \Hom(\mcT,\mcK_{\mcF_{2}})$ as representation of $Q$, where $\mcK_{\mcF_{i}}:=\Ker(H^{0}(\mcF_{i}) \otimes \mcO_{X} \to \mcF_{i})$. 
Note that $H^{0}(\mcF_{i})^{\vee} \simeq H^{0}(\mcK_{\mcF_{i}}^{\vee})$ 
follows from the exact sequence $0 \to \mcF_{i}^{\vee} \to H^{0}(\mcF_{i})^{\vee} \otimes \mcO_{X} \to \mcK_{\mcF_{i}}^{\vee} \to 0$ and 
$H^{1}(\mcF_{i}^{\vee})=H^{1}(\mcE_{i}(-1))=0$. 
Thus $\mcF_{1} \simeq \mcF_{2}$ if and only if $\mcK_{\mcF_{1}} \simeq \mcK_{\mcF_{2}}$. 
By the equivalence $\Phi$ in (\ref{eq-equiv}), 
$\mcK_{\mcF_{1}} \simeq \mcK_{\mcF_{2}}$ if and only if $\Hom(\mcT,\mcK_{\mcF_{1}}) \simeq \Hom(\mcT,\mcK_{\mcF_{2}})$ as $\End(\mcT)=\Bbbk Q$-modules. 
Therefore, $f(\Bbbk)$ is injective, which concludes that $f \colon M \to M'$ is an open immersion. 
Hence we conclude that $M$ is irreducible since so is $M'$. 
This completes the proof.
\end{proof}

Now Theorem~\ref{mainthm-moduli} is a direct consequence of \cite{RS17}. 

\begin{proof}[Proof of Theorem~\ref{mainthm-moduli}]
Since $M'=M^{\Theta\text{-st}}_{(2,2)}(Q)$ is the moduli of stable representations of the $5$-Kronecker quiver $Q$ whose dimension vector is $(2,2)$,  \cite[Theorems~4.4 and 6.1]{RS17} gives that $\Br(M') \simeq \Z/2\Z$ and the Brauer-Severi scheme $P_{i} \to M'$ defined in \cite[P.457, l.-9 -- l.-4]{RS17} is a generator of this Brauer group, where $i \in Q_{0}=\{v_{0},v_{1}\}$. 
If $M$ is fine as a moduli space, then by the functor in Proposition~\ref{prop-moduli-map}, there is a universal family $\mcV_{1,M} \oplus \mcV_{2,M}$ of $\Theta$-stable $\Bbbk Q$-module over $M$ with dimension vector $(2,2)$. 
Then the projective bundle $P_{i,M}:=\P_{M}(\mcV_{i,M})$ gives a trivial Brauer class, which is isomorphic to the base change of $P_{i}$. 
This contradicts the injectivity of the restriction map $\Br(M') \to \Br(M)$ (c.f. \cite[Proposition~2.3]{RS17}). 
\end{proof}

\subsection{Geometric aspects of $M^{\wF}_{0,4}$.}
In this section, we see some geometric property of the coarse moduli space $M^{\wF}_{0,4}$. 
From now on, fix a vector space $V=\Bbbk^{5}$.
Recall the following varieties for $r \in \{1,2,3,4\}$, which were introduced in \cite{HT15}; 
\begin{align*}
S_{r}&:=\{ Q \in |\mcO_{\P(V)}(2)| \mid \rk Q \leq r\} \subset |\mcO_{\P(V)}(2)| = \P(\Sym^{2}V^{\vee})  \text{ and } \\
U_{r}&:= \{([\Pi],Q) \in \Gr(2,V) \times S_{r} \mid \Pi \subset Q\}.
\end{align*}
In particular, $S_{4}$ parametrizes the singular hyperquadrics on $\P(V)$. 
In the above definition, we regard $\Gr(2,V)$ as the parametrizing space of $2$-planes in $\P(V):=\Proj \Sym^{\bullet} V$. 
Note that $U_{4} \simeq \P_{\Gr(2,V)}(\mcE)$, where $\mcE:=\Ker(\Sym^{2}V \otimes \mcO_{\Gr(2,V)} \to \Sym^{2}\mcQ)^{\vee}$. 
Let 
$\displaystyle U_{4} \mathop{\to}^{\pi} T_{4} \mathop{\to}^{\rho} S_{4}$ be the Stein factorization as summarized in the following diagram.
\begin{align}\label{diag-HT}
\xymatrix{
&\ar[ld]_{p}U_{4}\ar[rd]^{q} \ar[d]_{\pi}& \\
\Gr(2,V)&T_{4}\ar[r]_{\rho}&S_{4}
}
\end{align}
It was proved by \cite[Proposition 2.3]{HT15} that $T_{4} \to S_{4}$ is a double covering branched along $S_{3} \subset S_{4}$. 
%Note that $(p \times \pi) \colon U_{4} \to \Gr(2,V) \times T_{4}$ is a closed embedding. 

By \cite[Proposition~4.10]{Chung-Moon17}, it was proved that $T_{4}$ contains the moduli space $M'$ as an open subscheme. 
More precisely, $T_{4}$ is the moduli space $M^{\Theta\text{-sst}}_{(2,2)}(Q)$ of the semi-stable representations of the $5$-Kronecker quiver whose dimension vector is $(2,2)$. 
Then by the proof of Proposition~\ref{prop-irred}, 
$M$ is naturally contained in $T_{4}$ as an open subscheme. 

This geometric observation enables us to find an alternative proof of Theorem~\ref{mainthm-moduli}, which goes as follows. 
Set $U_{M}:=\pi^{-1}(M)$, $\pi_{M}:=\pi|_{U_{M}} \colon U_{M} \to M$, and $p_{M} \colon U_{M} \hra U_{4} \to \Gr(2,V)$. 
Consider $F_{M}:=U_{M} \times_{\Gr(2,V)} \Fl(2,4;V)$ and let $Q_{M}$ be the image of the natural morphism 
$F_{M} \to  M \times \P(V)$:
\begin{align}\label{diag-flag}
\xymatrix{
&&\Fl(2,4;V)\ar[lldd] \ar[rrdd] &\\
&\ar@{}[d]|{\Box}& \ar[u] \P_{U_{M}}(p_{M}^{\ast}\mcQ)=F_{M} \ar[ld]_{f} \ar[rd]^{\t}&\\
\Gr(2,V) &\ar[l]_{p_{M}}U_{M}\ar[rd]_{\pi_{M}}& &Q_{M} \ar[ld]_{q} \ar[r]& \P(V)\\
&&M.&&
}
\end{align}
For each $x:=[A_{x} \colon \Bbbk^{2} \to \Bbbk^{2} \otimes V] \in M$, 
%Note that for each $[x \colon \Bbbk^{2} \to \Bbbk^{2} \otimes V] \in M$, 
the corresponding quadric $\rho(x) = [Q_{x}] \in S_{4}$ is given by the determinant form $\det A_{x} \in \Sym^{2}V$. 
Since $x$ is stable, $Q_{x} \subset \P(V)$ is of rank $4$ by \cite[Lemma~4.3]{Chung-Moon17}. 
Then $C_{x}$ is a smooth conic on $\Gr(2,V)$ which parametrizes a one component of the Hilbert scheme of $2$-planes in $Q_{x}$. 
The universal family $f^{-1}(C_{x}) \to C_{x}$ can be identified with $\P_{\P^{1}}(\mcO_{\P^{1}} \to \mcO_{\P^{1}}(1)^{\oplus 1}) \to \P^{1}$ and 
the morphism $f^{-1}(C_{x}) \simeq \P_{\P^{1}}(\mcO_{\P^{1}} \to \mcO_{\P^{1}}(1)^{\oplus 1}) \to Q_{x}$ can be identified with the morphism given by the tautological bundle.

Let us assume that $M$ is fine to obtain a contradiction. 
From now on, denote a coherent sheaf $\pr_{1}^{\ast}\mcF \otimes \pr_{2}^{\ast}\mcG$ on a direct product by $\mcF \boxtimes \mcG$. 
Let $\mcE_{M} \in \mcM(M)$ be the universal object on $X \times M$ and $\mcF_{M}:=\mcE_{M} \otimes \pr_{1}^{\ast}\mcO_{X}(1)$. 
As Proposition~\ref{prop-moduli-map}, there exists rank $2$ vector bundles $\mcV_{1}$ and $\mcV_{2}$ on $M$ and the following exact sequence on $X \times M$. 
%we set $\mcK_{M}:=\Ker(\pr_{2}^{\ast}{\pr_{2}}_{\ast}\mcF_{M} \to \mcF_{M})$. 
%By Proposition~\ref{prop-moduli-map}, 
%$\mcV_{1}:={\mcH}om_{\pr_{2}}(\pr_{1}^{\ast}\mcQ(-1),\mcK_{M})$ is a locally free sheaf on $M$ and a natural map 
%$e \colon \mcQ(-1) \boxtimes \mcV_{1} \to \mcK_{M}$ is surjective. 
%Since $\Ker e|_{X_{s}} \simeq \mcO_{X}(-1)^{\oplus 2}$ for every $s \in M$, 
%the natural map
%$\pr_{2}^{\ast}{\pr_{2}}_{\ast}(\Ker e \otimes \pr_{1}^{\ast}\mcO_{X}(1)) \to (\Ker e \otimes \pr_{1}^{\ast}\mcO_{X}(1))$
%is isomorphic. 
%Hence $\mcV_{2}:={\pr_{2}}_{\ast}(\Ker e \otimes \pr_{1}^{\ast}\mcO_{X}(1))$ is a rank $2$ locally free sheaf on $M$ and there is an exact sequence
\[0 \to \mcO_{X}(-1) \boxtimes \mcV_{2} \mathop{\to}^{\alpha}  \mcQ(-1) \boxtimes \mcV_{1} \to \mcK_{M} \to 0.\]
This $\alpha$ gives an injective morphism ${\pr_{2}}_{\ast}(\alpha  \otimes \pr_{1}^{\ast}\mcO_{X}(1)) \colon \mcV_{2} \to  V \otimes \mcV_{1}$ on $M$. 
Let $A \in \Hom_{M}(\mcV_{2},\mcV_{1}) \otimes V$ be the corresponding element, which induces 
\[0 \to \mcO_{\P(V)}(-1) \boxtimes \mcV_{2} \mathop{\to}^{A} \mcO_{\P(V)} \boxtimes \mcV_{1} \to \mcL_{M}  \to 0\]
on $\P(V) \times M$. 
Note that every $x \in M$, 
$A_{x} \colon \mcO_{\P(V)}(-1) \otimes (\mcV_{2} \otimes \Bbbk(x)) \to \mcO_{\P(V)} \otimes (\mcV_{1} \otimes \Bbbk(x))$ 
is nothing but the representation. 
Hence the support of $\mcL_{M}$ coincides with $Q_{M}$ and $\mcL_{M}$ is a divisorial sheaf on $Q_{M}$. 
Let $D$ be a Weil divisor on $Q_{M}$ such that $\mcO_{Q_{M}}(D)=\mcL_{M}$ and $\wt{D}$ the proper transform of $D$ on $F_{M}$ via the birational morphism $F_{M} \to Q_{M}$. 
For each $x \in M$, $\mcL_{x}$ on $Q_{x}$ is the divisorial sheaf corresponding to a $2$-plane in $Q_{x}$. 
Hence there is a Cartier divisor $E$ on $U_{M}$ such that $f^{\ast}E \sim \wt{D}$. 
Note that every fiber $f$ of $\pi_{M}$ is a smooth rational curve and $E.f=1$. 

Taking the closure of $E$, 
we obtain a divisor $\ol{E}$ on $U_{4}$. 
Recall the morphisms $p$ and $\pi$ as in the diagram (\ref{diag-HT}).  
Since $U_{4}$ is the projectivization of a vector bundle $\mcE$ on $\Gr(2,5)$, $\Pic(U_{4}) \simeq \Z[H] \oplus \Z[\xi]$, where $H$ is the pull-back of a hyperplane section of $\Gr(2,V)$ under $p \colon U_{4} \to \Gr(2,V)$ and $\xi$ a tautological divisor.  
Then there exist $a,b \in \Z$ such that $\ol{E} \sim aH+b\xi$. 
Since a general fiber $C$ of $\pi \colon U_{4} \to T_{4}$ is a conic of $\Gr(2,5)$ and $\ol{E}.C=1$, 
we have $a=(1/2)$, which is a contradiction. 
Hence the moduli space $M$ never be the fine moduli space. 
%\end{proof}
%\end{comment}
In both proofs, the non-triviality of the Brauer group of $T_{4}$ is the most essential point. 

%\section{Classifications of rank $2$ weak Fano bundles of del Pezzo threefolds of Picard rank $1$}
\section{Conclusions}\label{sec-conclusions}

%\subsection{Classifications of rank $2$ weak Fano bundles of del Pezzo threefolds of Picard rank $1$}
With the previous work \cite{Ishikawa16} and \cite{FHI20} and our proof of Theorem~\ref{mainthm-resol}, we have completed the classification of rank $2$ weak Fano bundles on a del Pezzo $3$-fold of degree $d \in \{3,4,5\}$. 
In this concluding section shows that, on del Pezzo $3$-fold of degree $d \in \{1,2\}$, every rank $2$ weak Fano bundle splits, which is namely Theorem~\ref{mainthm-Ishikawa}. 
%Therefore, we complete our classification of weak Fano bundles on a del Pezzo $3$-fold of Picard rank $1$ by \cite{Ishikawa16}, \cite{FHI}, and this article. 
%For the classification, we prepare the following proposition, which is a detailed version of \cite[Lemma~3.2]{Ishikawa16}

%In this section, we complete our classification of weak Fano bundles on a del Pezzo $3$-fold of Picard rank $1$.
%To deduce the conclusion, we prepare the following proposition, which is a detailed version of \cite[Lemma~3.2]{Ishikawa16}.
%\subsection{Proof of Theorem~\ref{mainthm-Ishikawa}}
%\subsection{Classification when $\deg X \leq 2$}
%Proposition~\ref{prop-Ishikawa} gives a classification results when $\deg X \leq 2$ as follows. 
\begin{proof}[Proof of Theorem~\ref{mainthm-Ishikawa}]
Let $X$ be a del Pezzo $3$-fold of degree $d<3$ and $\mcE$ a weak Fano bundle of rank $2$. 
By Proposition~\ref{prop-Ishikawa}~(4) and (5), it suffices to show $c_{2}(\mcE) < 2$. 
By Proposition~\ref{prop-Ishikawa}~(3), if $c_{2} \geq 2$, then $c_{2}=2$, $c_{1}=0$ and $d=2$. 
By Proposition~\ref{prop-Ishikawa}~(1) and (2), we have $h^{0}(\mcE(1)) \geq \chi(\mcE(1))=2$. 
Let $\pi_{\mcE} \colon \P(\mcE) \to X$ be the projectivization, $\xi_{\mcE}$  a tautological divisor, and $H_{X}$ a hyperplane section on $X$. 
Then the divisor $\xi_{\mcE}+\pi_{\mcE}^{\ast}H_{X}$ is linearly equivalent to an effective divisor 
since $h^{0}(\mcO_{\P(\mcE)}(\xi_{\mcE}+\pi_{\mcE}^{\ast}H_{X}))=h^{0}(\mcE(1))>0$. 
Since $-K_{\P(\mcE)} \sim 2\xi_{\mcE}+\pi_{\mcE}^{\ast}(3H_{X})$ is nef and big, 
we obtain $0 \leq (-K_{\P(\mcE)})^{3}(\xi_{\mcE}+\pi^{\ast}H_{X})=-2$, which is a contradiction. 
%We complete the proof of Theorem~\ref{mainthm-Ishikawa}. 
%\qed
\end{proof}

%By Theorems~\ref{mainthm-resol} and \ref{mainthm-Ishikawa}, we classified rank $2$ weak Fano bundles on a del Pezzo $3$-fold of degree $1,2,5$. 
Now we complete our classification of rank $2$ weak Fano bundles on a del Pezzo $3$-fold $X$ of Picard rank $1$ by \cite{Ishikawa16}, \cite{FHI20}, and this article. 
Let us compare the classification of the Fano bundles \cite{mos2} with ours.
A \emph{Fano bundle} $\mcF$ on $X$ are defined to be a vector bundle on $X$ such that $-K_{\P_{X}(\mcF)}$ is ample and its classification is given by \cite{mos2} when $\mcF$ is of rank $2$. 
Compared to their classification, our classification implies that every rank $2$ indecomposable weak Fano bundle $\mcF$ with $c_{1}(\mcF) \in \{1,2\}$ satisfies either of the following two conditions. 
\begin{itemize}
\item When $\mcF$ is a Fano bundle, $c_{1}(\mcF)=1$ and $\mcF$ is globally generated. This follows from \cite{mos2}.
\item When $\mcF$ is a weak Fano bundle but not a Fano bundle, $c_{1}(\mcF)=2$ and $\mcF$ is globally generated. This follows from our classification.
\end{itemize}
In other words, rank $2$ indecomposable weak Fano bundles are determined to be Fano or not by the parity of $c_{1}(\mcF)$. 
This property is so far only known from the classification.
As a summarized result, we can see that either of the following conditions hold for every normalized rank $2$ weak Fano bundle $\mcE$ on a del Pezzo $3$-fold $X$ of degree $d \leq 5$. 
\begin{itemize}
\item $\mcE$ is a direct sum of line bundles, 
\item $\deg X \geq 4$ and $\mcE$ is a indecomposable Fano bundle with $c_{1}(\mcE)=-1$, 
\item $\deg X \geq 3$ and $\mcE$ fits into an exact sequence $0 \to \mcO_{X} \to \mcE \to \mcI_{l/X} \to 0$, where $l \subset X$ is a line, 
\item $\deg X \geq 3$ and $\mcE$ is a minimal instanton bundle, which is equivalent to saying that $\mcE(1)$ is a special Ulrich bundle \cite{Beauville}, or 
\item $\deg X \geq 4$ and $\mcE$ fits into an exact sequence 
$0 \to \mcO_{X}(-1) \to \mcE \to \mcI_{C/X}(1) \to 0$, where 
$C \subset X \subset \P^{\deg X+1}$ is a non-degenerated elliptic curve defined by quadratic equations in $\P^{\deg X+1}$ with $\deg X+3 \leq \deg C < 2\deg X$. 
\end{itemize}

\appendix

\section{Proof of Proposition~\ref{prop-moduli-prelim}}

Let $X$ be a del Pezzo $3$-fold of degree $5$ over $\Bbbk$. 
To prove Proposition~\ref{prop-moduli-prelim}, we prepare the following lemma. 

\begin{lem}
\begin{enumerate}
\item[(1)] For a line $l \subset X$, $\Ext^1(\mcI_{l/X}, \mcO_X) = \Bbbk$.
In particular the exact sequence 
\begin{align} \label{ishikawa exact seq}
 0 \to \mcO_X \to \mcE \to \mcI_{l/X} \to 0 
\end{align}
 in Proposition~\ref{prop-Ishikawa}~(5) is uniquely determined by $\mcI_{l/X}$.
\item[(2)] Let $\mcE$ be a rank two weak Fano bundle on $X$ with $c_1(\mcE) = 0$ and $c_2(\mcE) = 1$. Then $H^0(\mcE) = \Bbbk$.
In particular, the ideal sheaf $\mcI_{l/X}$ of a line $l$ that fits in (\ref{ishikawa exact seq}) is uniquely determined by $\mcE$.
\end{enumerate}
\end{lem}

\begin{proof}
(1) follows from the exact sequence $0 \to \mcI_{l/X} \to \mcO_X \to \mcO_l \to 0$ and the Serre duality $\Ext^1(\mcI_{l/X}, \mcO_X) \simeq H^2(\mcI_{l/X}(-2))^{\vee} \simeq H^{1}(\mcO_{\P^{1}}(-2))^{\vee} \simeq \Bbbk$. 
(2) follows from (\ref{ishikawa exact seq}).
\end{proof}

The following criterion is useful to see whether moduli functors are corepresentable.
\begin{lem}\label{lem-criteria-corep}
Let $\mcM \colon (\Sch/\Bbbk)^{\op} \to (\Sets)$ be a contravariant functor and $M \in (\Sch/\Bbbk)$ an object. 
Let $\pi \colon \mcM \to \Hom(-,M)$ and $\sigma \colon \Hom(-,M) \to \mcM$ be morphisms such that $\pi \circ \sigma = \id_{\Hom(-,M)}$. 
Suppose that, 
for every $S \in (\Sch/\Bbbk)$ and $[\mcE_{S}] \in \mcM(S)$, 
there exists an open covering $S=\bigcup_{i}S_{i}$ and $g_{i} \in \Hom(S_{i},M)$ such that $[\mcE_{S}|_{S_{i}}]=\s_{i}(g_{i})$, where $\s_{i}:=\s_{S_{i}} \colon \Hom(S_{i},M) \to \mcM(S_{i})$. 
%\[\begin{array}{ccc}
%\Hom(S_{i},M) & \displaystyle \mathop{\longrightarrow}^{\sigma_{i}} &\mcM(S_{i})\\
%\rotatebox{90}{$\in$} & & \rotatebox{90}{$\in$} \\
%\exists! g_{i} & \longmapsto & [\mcE_{i}].
%\end{array}
%\]
%\item $g_{i}|_{S_{ij}}=g_{j}|_{S_{ij}}$ for $i,j$, where $S_{ij}:=S_{i} \cap S_{j}$. 
Then $\mcM$ is corepresentable by $M$. 
\end{lem}
\begin{proof}
Let $\tau \colon \mcM \to \Hom(-,T)$ be an arbitrary morphism. 
It suffices to show that $\tau \circ \pi \circ \sigma = \tau \colon \mcM \to \Hom(-,T)$. 
Let $S \in (\Sch/\Bbbk)$ be an arbitrary object and set morphisms as follows: 
\[\xymatrix{
\Hom(S,M) \ar@(l,l)[d]_{\sigma_{S}} \ar[r] & \prod_{i}\Hom(S_{i},M) \ar@(l,l)[d]_{\prod \sigma_{i}} \\ %\ar@<0.8ex>[r] \ar@<-0.8ex>[r] & \prod_{i,j}\Hom(S_{ij},M)\ar@(l,l)[d]_{\prod \sigma_{ij}} 
\mcM(S) \ar[u]_{\pi_{S}} \ar[d]^{\tau_{S}} \ar[r] & \prod_{i} \mcM(S_{i}) \ar[u]_{\prod \pi_{i}} \ar[d]^{\prod \tau_{i}} \\ %\ar@<0.8ex>[r] \ar@<-0.8ex>[r] & \prod_{i,j}\mcM(S_{ij}) \ar[u]_{\prod \pi_{ij}}  \\
\Hom(S,T)\ar[r]&\prod_{i}\Hom(S_{i},T) %\ar@<0.8ex>[r] \ar@<-0.8ex>[r] &\prod_{i,j}\Hom(S_{ij},T).
}\]

Fix $[\mcE_{S}] \in \mcM(S)$ and set $g:=\pi_{S}([\mcE_{S}])$. 
Take an open covering $S=\bigcup_{i} S_{i}$ and morphisms $g_{i} \colon S_{i} \to M$ as the above assumption. 
%\[\xymatrix{
%\exists ! g \ar@(l,l)[d]_{\sigma_{S}} \ar[r] & \exists ! (g_{i})_{i}\ar@(l,l)[d]_{\prod \sigma_{i}} \ar@<0.8ex>[r]\ar@<-0.8ex>[r] & (g_{i}|_{S_{ij}}=g_{j}|_{S_{ij}})\ar@(l,l)[d]_{\prod \sigma_{ij}} \\
%[\mcE_{g}],[\mcE_{S}] \ar[d]_{\tau_{S}} \ar[u]^{\pi_{S}}\ar[r] & ([\mcE_{i}])_{i}  \ar[u]^{\prod \pi_{i}}\ar[d]_{\prod \tau_{i}} \ar@<0.8ex>[r]\ar@<-0.8ex>[r] & \prod_{i,j}\mcM(S_{ij}) \ar[u]_{\prod \pi_{ij}} \ar[d]_{\prod \tau_{ij}} \\
%\Hom(S,T)\ar[r]&\prod_{i}\Hom(S_{i},T) \ar@<0.8ex>[r]\ar@<-0.8ex>[r] &\prod_{i,j}\Hom(S_{ij},T).
%}\]
Then $g_{i}=(\pi_{i} \circ \s_{i})(g_{i})=\pi_{i}([\mcE_{S}]|_{S_{i}})=g|_{S_{i}}$. 
%Then $\pi_{S}([\mcE_{S}])|_{S_{i}}=\pi_{i}([\mcE_{i}])=g_{i}$. 
%Then the morphisms $g_{i} \colon S_{i} \to M$ glue together to $g \colon S \to M$. 
%This implies $\pi_{S}([\mcE_{S}])=g$. 
Let $[\mcE_{g}]:=\sigma_{S}(g)$. 
Since $[\mcE_{g}|_{S_{i}}]=\sigma_{i}(g_{i})=[\mcE_{S}|_{S_{i}}]$,  
the both of $[\mcE_{g}]$ and $[\mcE_{S}]$ go to the same element of $\Hom(S,T)$ under the map $\tau_{S}$. 
Hence $\tau_{S}([\mcE_{S}])=\tau_{S}([\mcE_{g}])=(\tau_{S} \circ \sigma_{S})(g)=(\tau_{S} \circ \sigma_{S} \circ \pi_{S})([\mcE_{S}])$, which shows this lemma. 
\end{proof}

\begin{proof}[Proof of Proposition~\ref{prop-moduli-prelim}]
(1) Let $(c_{1},c_{2}) = (0,-5),(-1,0),(0,0),(-1,2)$ and $\mcE_{0}:=\mcO_{X}(-1) \oplus \mcO_{X}(1),\mcO_{X}(-1) \oplus \mcO_{X}, \mcO_{X}^{\oplus 2},\mcR$ respectively. 
Note that $\mcM^{\wF}_{c_{1},c_{2}}(\Spec \Bbbk)=\{[\mcE_{0}]\}$ as a set. 
Let $\pi \colon \mcM^{\wF}_{c_{1},c_{2}} \to \Hom_{k}(-,\Spec \Bbbk)$ be the natural morphism. 
Let $\sigma \colon \Hom_{k}(-,\Spec \Bbbk) \to \mcM^{\wF}_{c_{1},c_{2}}$ be the morphism defined by giving $\pr_{1}^{\ast}\mcE_{0}$. 
For $S \in (\Sch/\Bbbk)$ and $[\mcE_{S}] \in \mcM(S)$, 
if there is an open covering $S=\bigcup_{i}S_{i}$ such that $\mcE_{S_{i}}:=\mcE_{S}|_{X \times S_{i}} \simeq \pr_{1}^{\ast}\mcE_{0}$, 
then Lemma~\ref{lem-criteria-corep} shows the corepresentability of $\Spec \Bbbk$. 

When $(c_{1},c_{2}) = (0,-5)$ or $(-1,0)$, 
then $\mcE_{\ol{s}} \simeq \mcO_{X_{\ol{s}}}(a_{1}) \oplus \mcO_{X_{\ol{s}}}(a_{2})$ for every $s \in S$ where $(a_{1},a_{2})=(-1,1)$ or $(-1,0)$ respectively. 
Then it follows from the Cohomology and Base Change theorem that there are two invertible sheaves $\mcL_{1},\mcL_{2}$ on $S$ such that 
$\mcE_{S} \simeq \bigoplus_{i=1,2} \pr_{1}^{\ast} \mcO_{X}(a_{i}) \otimes \pr_{2}^{\ast}\mcL_{i}$. 
Taking a local trivialization, we obtain an open covering $S=\bigcup_{i}S_{i}$ such that $\mcE_{S}|_{X \times S_{i}} \simeq \bigoplus_{i=1,2} \pr_{1}^{\ast} \mcO_{X}(a_{i})=\pr_{1}^{\ast}\mcE_{0}$. 
When $(c_{1},c_{2})=(0,0)$ (resp. $(-1,2)$), 
then $\mcE_{\ol{s}} \simeq \mcO_{X_{\ol{s}}}^{\oplus 2}$ (resp. $\mcR$) for every $s \in S$. 
Again by the Cohomology and Base Change theorem, 
there exists a rank $2$ (resp. $1$) locally free sheaf $\mcE$ on $S$ such that $\mcE_{S} \simeq \pr_{2}^{\ast}\mcE$ (resp. $\pr_{1}^{\ast}\mcR \otimes \pr_{2}^{\ast}\mcE$). 
Taking a local trivialization, we obtain an open covering $S=\bigcup_{i}S_{i}$ such that $\mcE_{S}|_{X \times S_{i}} \simeq \pr_{1}^{\ast}\mcE_{0}$. This completes the proof of (1). 

(2) First, we construct a morphism $\tau \colon \mcM^{\wF}_{0,1} \to \Hom(-,\P^{2})$. 
Let $S$ be a scheme of finite type over $\Bbbk$ and $\mcE_{S} \in \mcM^{\wF}_{0,1}(S)$ an object. 
Then $\e \colon \pr_{2}^{\ast}{\pr_{2}}_{\ast}\mcE_{S} \to \mcE_{S}$ is injective and $\Cok \e$ is flat over $S$. 
Moreover, $\mcL_{1}:={\pr_{2}}_{\ast}\mcE_{S}$ and $\mcL_{2}:={\pr_{2}}_{\ast}\mcH{om}(\Cok \e,\mcO_{X \times S})$ are invertible by Cohomology and Base Change. 
Under the isomorphisms 
$H^{0}(\mcO_{S}) \simeq 
H^{0}(\mcL_{2}^{-1} \otimes {\pr_{2}}_{\ast}\mcH{om}(\Cok \e,\mcO_{X \times S}))
=H^{0}(\pr_{2}^{\ast}\mcL_{2}^{-1} \otimes \mcH{om}(\Cok \e,\mcO_{X \times S}))
=\Hom(\Cok \e \otimes \pr_{2}^{\ast}\mcL_{2},\mcO_{X \times S})$, 
there is a morphism $t \colon \Cok \e \otimes \pr_{2}^{\ast}\mcL_{2} \to \mcO$ corresponding to the unit of $H^{0}(S,\mcO_{S})$. 
%By Remark~\ref{rem-stacks}, 
Since $\Cok \e$ is flat over $S$, $t$ is injective and its cokernel, say $\mcO_{Z}$ where $Z$ denotes a closed subscheme of $X \times S$, is flat over $S$. 
Hence there is an exact sequence $0 \to \pr_{2}^{\ast}(\mcL_{1} \otimes \mcL_{2}) \to \mcE_{S} \otimes \pr_{2}^{\ast}\mcL_{2} \to \mcI_{Z} \to 0$. 
Replacing $\mcE_{S}$ to $\mcE_{S} \otimes \pr_{2}^{\ast}\mcL_{2}$ and putting $\mcL_{S}:=\mcL_{1} \otimes \mcL_{2}$, 
we obtain the following exact sequence
\begin{align}
0 \to \pr_{2}^{\ast}\mcL_{S} \to \mcE_{S} \to \mcI_{Z/X} \to 0.\label{ex-moduli-01}
\end{align}
It is easy to see that $Z_{\ol{s}}$ is a line on $X_{\ol{s}}$ for every $s \in S$. 
%Conversely, for every $\Bbbk$-scheme $S$ and a family of line $Z \subset X \times S$, 
%Then the universality of the Hilbert scheme $\Hilb_{t+1}(X)$ of the lines on $X$, 
%we obtain a morphism $f \colon S \to \Hilb_{t+1}(X)$. 
%Noting that $\Hilb_{t+1}(X) \simeq \P^{2}$ by Theorem~\ref{thm-FN}, 
%we obtain a functor $\tau \colon \mcM^{\wF}_{0,1} \to \Hom(-,\P^{2})$. 
%To see $\mcM^{\wF}_{0,1}(S)$ more precisely, let us compute $\Ext^{i}(\mcI_{Z/X \times S},\pr_{2}^{\ast}\mcL_{S})$. 
Now let us consider the following spectral sequence: 
\begin{align*}
%&R^{i}{\pr_{2}}_{\ast}{\mcE}xt^{j}(\mcI_{Z},\pr_{2}^{\ast}\mcL_{S}) \ra {\mcE}xt^{i+j}_{\pr_{2}}(\mcI_{Z},\pr_{2}^{\ast}\mcL_{S}) \text{ and } \\
&H^{i}(S,{\mcE}xt^{j}_{\pr_{2}}(\mcI_{Z},\pr_{2}^{\ast}\mcL_{S})) \ra \Ext^{i+j}_{X \times S}(\mcI_{Z},\pr_{2}^{\ast}\mcL_{S}),
\end{align*}
where ${\mcE}xt^{j}_{\pr_{2}}(\mcF,-)$ denotes the $j$-th cohomology of the left exact functor ${\pr_{2}}_{\ast}{\mcH}{om}(\mcF,-) \colon \Coh(X \times S) \to \Coh(S)$ \cite{Lange83}. 
Since ${\mcE}xt^{0}_{\pr_{2}}(\mcI_{Z},\pr_{2}^{\ast}\mcL_{S}) = \mcL_{S}$, 
we have an exact sequence 
%${\mcE}xt^{j}_{\pr_{2}}(\mcI_{Z},\pr_{2}^{\ast}\mcL_{S})={\mcE}xt^{j}_{\pr_{2}}(\mcI_{Z},\mcO_{X \times S}) \otimes \mcL_{S}$ and ${\mcE}xt^{0}_{\pr_{2}}(\mcI_{Z},\mcO_{X \times S}) = \mcO_{S}$. 
\begin{align}
0 \to H^{1}(\mcL_{S}) \to \Ext^{1}(\mcI_{Z},\pr_{2}^{\ast}\mcL_{S}) \mathop{\to}^{\beta} H^{0}({\mcE}xt^{1}_{\pr_{2}}(\mcI_{Z},\mcL_{S})) \to H^{2}(\mcL_{S}) \label{ex-relspect}
\end{align}
Note that ${\mcE}xt^{1}_{\pr_{2}}(\mcI_{Z},\mcL_{S})$ is an invertible sheaf (c.f. \cite[Theorem~1.4]{Lange83}). 
Moreover, the global section $\e:=\delta([\mcE_{S}])$ is a nowhere vanishing section. 
Hence ${\mcE}xt^{1}_{\pr_{2}}(\mcI_{Z},\mcL_{S}) \simeq \mcO_{S}$, which implies $\mcL_{S} \simeq {\mcE}xt^{1}_{\pr_{2}}(\mcI_{Z},\mcO_{S})^{\vee}$. 
%Let $A$ be a ring and $s \in A$ be an element. 
%If $s(\mathfrak{m}) \neq 0$ for every maximal ideal $\mathfrak{m}$, then $s \in A^{\times}$. 
By this argument, we have a natural identification
\[\mcM^{\wF}_{0,1}(S) =
\left\{ 
(f \colon S \to \Hilb_{t+1}(X),[\mcE_{S}])
\left|
\begin{array}{l}
%Z \subset X \times S \text{ is a flat family of lines on } X, \\
[\mcE_{S}] \in \Ext^{1}(\mcI_{Z/X \times S},\pr_{2}^{\ast}\mcL_{S}), \text{ where}\\
 Z:=S \times_{\Hilb_{t+1}(X)} \Univ_{t+1}(X) \text{ and } \\
\mcL_{S}:=\mcE{xt}^{1}_{\pr_{2}}(\mcI_{Z/X \times S},\mcO_{X \times S})^{\vee} \\
\text{s.t. $[\mcE_{S}]$ goes to a unit under the map} \\
\beta \colon \Ext^{1}(\mcI_{Z/X \times S},\pr_{2}^{\ast}\mcL_{S}) \to H^{0}(S,\mcO_{S}). \\
\end{array}
\right\}
\right/\sim
,\]
where $(f,[\mcE_{S}]) \sim (f',[\mcE_{S}']) :\iff f'=f$ and $\mcE_{S}$ is isomorphic to $\mcE_{S}'$ as a $\mcO_{X \times S}$-module. 
In particular, there is a functor $\t \colon \mcM^{\wF}_{0,1} \to \Hom(-,\Hilb_{t+1}(X)) \simeq \Hom(-,\P^{2})$ (c.f. Theorem~\ref{thm-FN}). 

Let us show that $\tau$ is the corepresentation of the functor $\mcM^{\wF}_{0,1}$. 
Let $B$ be an arbitrary $\Bbbk$-scheme and $\alpha \colon \mcM^{\wF}_{0,1} \to \Hom(-,B)$ a functor. 
It suffices to show that there is a morphism $G \colon \Hom(-,\P^{2}) \to \Hom(-,B)$ such that following diagram commutes: 
%It suffices to show that $\alpha \circ \sigma \circ \tau = \alpha$. 
%Let $g \colon \P^{2} \to B$ be the image of $[\mcE_{\P^{2}}]$ under  $\alpha(\P^{2}) \colon \mcM^{\wF}_{0,1}(\P^{2}) \to \Hom(\P^{2},B)$. 
%Then the functor $\alpha \circ \sigma$ is induced from $g$. 
\[\xymatrix{
\mcM^{\wF}_{0,1} \ar[rr]^{\alpha} \ar[rd]_{\tau}&& \Hom(-,B) \\
&\Hom(-,\P^{2}). \ar@{.>}[ru]_{G}&
}\]
Let $(\AffSch/\Bbbk)$ be the category of affine schemes of finite type over $\Bbbk$. 
First, we make a natural transformation $G \colon \Hom(-,\P^{2}) \to \Hom(-,B)$ for the functors $(\AffSch/\Bbbk)^{\op} \to \Sets$. 
For each affine scheme $U$ of finite type over $\Bbbk$,  
let $f_{U} \colon U \to \P^{2}$ be a morphism. 
Since $\mcM^{\wF}_{0,1}(U)=(\Hom(U,\P^{2}) \times \mcO(U)^{\times})/\sim$, 
There is a lift $[f_{U},e_{U}] \in \mcM^{\wF}_{0,1}(U)$ is a lift of $f_{U} \in \Hom(U,\P^{2})$, where $e_{U} \in \mcO(U)^{\times}$ is the multiplicative identity of the ring $\mcO(U)$. 
Let $G_{U}(f_{U})$ be the morphism $\tau_{U}([f_{U},e_{U}]) \colon U \to B$. 
From this construction, $G_{U} \colon \Hom(U,\P^{2}) \to \Hom(U,B)$ gives the desired natural transformation $G$.

Next, we make a map $G_{S} \colon \Hom(S,\P^{2}) \to \Hom(S,B)$ for each scheme $S$ of finite type over $\Bbbk$. 
Let $f \colon S \to \P^{2}$ be a morphism and $S=\bigcup_{i} U_{i}$ an affine covering. 
Let $h_{U_{i}}:=G_{U_{i}}(f|_{U_{i}}) \colon U_{i} \to B$ for each $i$. 
Let $U_{ij}:=U_{i} \cap U_{j}$. 
Then $h_{U_{i}}|_{U_{ij}} 
= \tau_{U_{i}}([f_{U_{i}},e_{U_{i}}])|_{U_{ij}} 
= \tau_{U_{ij}}([f_{U_{i}},e_{U_{i}}]|_{U_{ij}}) 
= \tau_{U_{ij}}([f_{U_{ij}},e_{U_{ij}}])
= h_{U_{j}}|_{U_{ij}}$. 
Hence we obtain a morphism $h \colon S \to B$ such that $h|_{U_{i}}=h_{i}$. 
Then we can define $G_{S}(f)$ as $h$. 

Finally, we check $G_{S}$ gives a functor $G \colon \Hom(-,\P^{2}) \to \Hom(-,B)$. 
For each morphism $a \colon S \to T$ in $(\Sch/\Bbbk)$, it suffices to show the following diagram commutes:
\[\xymatrix{
\Hom(T,\P^{2}) \ar[r]^{G_{T}} \ar[d]_{- \circ a} & \Hom(T,B) \ar[d]^{- \circ a} \\
\Hom(S,\P^{2}) \ar[r]_{G_{S}} & \Hom(S,B).
}\]
Let $f \colon T \to \P^{2}$ be a morphism. 
Take affine coverings $T=\bigcup_{i} T_{i}$ and $a^{-1}T_{i}=\bigcup_{j} S_{ij}$ for each $i$. 
Let $a_{ij}:=a|_{S_{ij}} \colon S_{ij} \to T_{i}$:
\[\xymatrix{
\Hom(T_{i},\P^{2}) \ar[r]^{G_{T_{i}}} \ar[d]_{- \circ a_{ij}} & \Hom(T_{i},B) \ar[d]^{- \circ a_{ij}} \\
\Hom(S_{ij},\P^{2}) \ar[r]_{G_{S_{ij}}} & \Hom(S_{ij},B).
}\]
Since the above diagram commutes, 
$(G_{T}(f) \circ a)|_{S_{ij}} = G_{T_{i}}(f|_{T_{i}}) \circ a_{ij} = G_{S_{ij}}(f|_{T_{i}} \circ a_{ij}) = G_{S}(f \circ a)|_{S_{ij}}$ for each $i,j$. 
Thus $G_{T}(f) \circ a = G_{S}(f \circ a)$. 
Thus $\P^{2}$ is the course moduli space of $\mcM^{\wF}_{0,1}$. 
We complete the proof. 
	
(3) Note that for every instanton bundle $\mcE$ with $c_{2}(\mcE)=2$, $\mcE(1)$ is $0$-regular by \cite[Lemma~3.1]{Kuznetsov12}. 
Hence, for a rank $2$ vector bundle $\mcE$ on $X$ with $c_{1}(\mcE)=0$ and $c_{2}(\mcE)=2$, $\mcE$ is weak Fano if and only if $\mcE$ is an instanton bundle. 
Therefore, the moduli space $M^{\ins}_{0,2}$ constructed in \cite[Section~5]{Sanna17} is the course moduli space $M^{\wF}_{0,2}$. 
Moreover, this moduli space is smooth, irreducible, and not fine by \cite[Theorem~5.8 and Proposition~5.12]{Sanna17}. 

We also recall that, for a rank $2$ vector bundle $\mcE$ on $X$ with $c_{1}(\mcE)=0$ and $c_{2}(\mcE)=3$, $\mcE$ is weak Fano if and only if $\mcE$ is an instanton bundle and $\mcE(1)$ is globally generated by Proposition~\ref{prop-FHI}.
In \cite[Section~6]{Sanna17}, Sanna constructed the fine moduli space $M^{\ins}_{0,3}$ of instanton bundles on $X$ of charge $3$, which is a smooth irreducible variety by \cite[Lemma~6.22 and Theorem~6.23]{Sanna17}. 
Thus we obtain the fine moduli space $M^{\wF}_{0,3}$ as an open subscheme of $M^{\ins}_{0,3}$. 
%Hence the assertion follows from the fact that the moduli space of instanton bundles $M^{\ins}_{0,2}$ (resp. $M^{\ins}_{0,3}$) is a smooth irreducible variety and not fine (resp. fine) as a moduli space \cite{Sanna17}.  
%For the proof, we refer the reader to \cite[Proposition~5.12]{Sanna17} and \cite[Remark~4.11 and Section~7]{Sanna17}. 
\end{proof}

\providecommand{\bysame}{\leavevmode\hbox to3em{\hrulefill}\thinspace}
\providecommand{\MR}{\relax\ifhmode\unskip\space\fi MR }
% \MRhref is called by the amsart/book/proc definition of \MR.
\providecommand{\MRhref}[2]{%
  \href{http://www.ams.org/mathscinet-getitem?mr=#1}{#2}
}
\providecommand{\href}[2]{#2}

\end{document}